% flatex input: [main.tex]
\documentclass{zml-article}

\usepackage{amsmath}
\usepackage{amssymb}
\usepackage{wasysym}
\usepackage{url}
\usepackage{amsthm}
\usepackage{graphicx}
\usepackage{amscd}
\usepackage{stmaryrd}
\usepackage[usenames,dvipsnames,svgnames,table]{xcolor}
\usepackage{todonotes}
\usepackage{cleveref}
\usepackage{mdframed}
\usepackage{extarrows}
\usepackage{mathtools}
\usepackage{thm-restate}
\usepackage[T5,T1]{fontenc}
\usepackage{quiver}
\usepackage{tikz}
\usetikzlibrary{decorations.pathmorphing}

\ifdefined\cA
\else
\newcommand\cA{\mathcal{A}}
\fi
\ifdefined\cB
\else
\newcommand\cB{\mathcal{B}}
\fi
\ifdefined\cC
\else
\newcommand\cC{\mathcal{C}}
\fi
\ifdefined\cD
\else
\newcommand\cD{\mathcal{D}}
\fi
\ifdefined\cE
\else
\newcommand\cE{\mathcal{E}}
\fi
\ifdefined\cF
\else
\newcommand\cF{\mathcal{F}}
\fi
\ifdefined\cG
\else
\newcommand\cG{\mathcal{G}}
\fi
\ifdefined\cH
\else
\newcommand\cH{\mathcal{H}}
\fi
\ifdefined\cI
\else
\newcommand\cI{\mathcal{I}}
\fi
\ifdefined\cJ
\else
\newcommand\cJ{\mathcal{J}}
\fi
\ifdefined\cK
\else
\newcommand\cK{\mathcal{K}}
\fi
\ifdefined\cL
\else
\newcommand\cL{\mathcal{L}}
\fi
\ifdefined\cM
\else
\newcommand\cM{\mathcal{M}}
\fi
\ifdefined\cN
\else
\newcommand\cN{\mathcal{N}}
\fi
\ifdefined\cO
\else
\newcommand\cO{\mathcal{O}}
\fi
\ifdefined\cP
\else
\newcommand\cP{\mathcal{P}}
\fi
\ifdefined\cQ
\else
\newcommand\cQ{\mathcal{Q}}
\fi
\ifdefined\cR
\else
\newcommand\cR{\mathcal{R}}
\fi
\ifdefined\cS
\else
\newcommand\cS{\mathcal{S}}
\fi
\ifdefined\cT
\else
\newcommand\cT{\mathcal{T}}
\fi
\ifdefined\cU
\else
\newcommand\cU{\mathcal{U}}
\fi
\ifdefined\cV
\else
\newcommand\cV{\mathcal{V}}
\fi
\ifdefined\cW
\else
\newcommand\cW{\mathcal{W}}
\fi
\ifdefined\cX
\else
\newcommand\cX{\mathcal{X}}
\fi
\ifdefined\cY
\else
\newcommand\cY{\mathcal{Y}}
\fi
\ifdefined\cZ
\else
\newcommand\cZ{\mathcal{Z}}
\fi
\ifdefined\bA
\else
\newcommand\bA{\mathbb{A}}
\fi
\ifdefined\bB
\else
\newcommand\bB{\mathbb{B}}
\fi
\ifdefined\bC
\else
\newcommand\bC{\mathbb{C}}
\fi
\ifdefined\bD
\else
\newcommand\bD{\mathbb{D}}
\fi
\ifdefined\bE
\else
\newcommand\bE{\mathbb{E}}
\fi
\ifdefined\bF
\else
\newcommand\bF{\mathbb{F}}
\fi
\ifdefined\bG
\else
\newcommand\bG{\mathbb{G}}
\fi
\ifdefined\bH
\else
\newcommand\bH{\mathbb{H}}
\fi
\ifdefined\bI
\else
\newcommand\bI{\mathbb{I}}
\fi
\ifdefined\bJ
\else
\newcommand\bJ{\mathbb{J}}
\fi
\ifdefined\bK
\else
\newcommand\bK{\mathbb{K}}
\fi
\ifdefined\bL
\else
\newcommand\bL{\mathbb{L}}
\fi
\ifdefined\bM
\else
\newcommand\bM{\mathbb{M}}
\fi
\ifdefined\bN
\else
\newcommand\bN{\mathbb{N}}
\fi
\ifdefined\bO
\else
\newcommand\bO{\mathbb{O}}
\fi
\ifdefined\bP
\else
\newcommand\bP{\mathbb{P}}
\fi
\ifdefined\bQ
\else
\newcommand\bQ{\mathbb{Q}}
\fi
\ifdefined\bR
\else
\newcommand\bR{\mathbb{R}}
\fi
\ifdefined\bS
\else
\newcommand\bS{\mathbb{S}}
\fi
\ifdefined\bT
\else
\newcommand\bT{\mathbb{T}}
\fi
\ifdefined\bU
\else
\newcommand\bU{\mathbb{U}}
\fi
\ifdefined\bV
\else
\newcommand\bV{\mathbb{V}}
\fi
\ifdefined\bW
\else
\newcommand\bW{\mathbb{W}}
\fi
\ifdefined\bX
\else
\newcommand\bX{\mathbb{X}}
\fi
\ifdefined\bY
\else
\newcommand\bY{\mathbb{Y}}
\fi
\ifdefined\bZ
\else
\newcommand\bZ{\mathbb{Z}}
\fi

\theoremstyle{definition}
\newtheorem{theorem}{Theorem}
\newtheorem{remark}[theorem]{Remark}
\newtheorem{definition}[theorem]{Definition}

\newtheorem{corollary}[theorem]{Corollary}
\newtheorem{proposition}[theorem]{Proposition}
\newtheorem{example}[theorem]{Example}
\newtheorem{lemma}[theorem]{Lemma}
\newtheorem{claim}[theorem]{Claim}

\newcommand{\Cantor}{{2^\mathbb{N}}}

\newcommand{\id}{\textrm{id}}

\newcommand{\Baire}{{\mathbb{N}^\mathbb{N}}}
\newcommand{\hide}[1]{}

\newcommand{\partto}{\rightharpoonup}

\newcommand{\Ninfty}{\mathbb{N}_\infty}
\newcommand\powerset{\mathcal{P}}
\newcommand{\inl}{{\sf inl}}
\newcommand{\inr}{{\sf inr}}
\newcommand\LPO{{\sf LPO}}
\newcommand\LLPO{{\sf LLPO}}
\newcommand\dneg{\neg\neg}
\newcommand\modal{\bigcirc}
\newcommand\longto{\longrightarrow}
\newcommand\MP{{\sf MP}}
\newcommand\Succ{\mathsf{S}}
\newcommand\Succinfty{\underline{\mathsf{S}}}
\newcommand\tuple[1]{\left\langle #1 \right\rangle}

\newlength{\LETTERheight}
\AtBeginDocument{\settoheight{\LETTERheight}{I}}
\newcommand*{\squiggle}{\ \raisebox{0.24\LETTERheight}{\tikz \draw [-,
line join=round,
decorate, decoration={
    zigzag,
    segment length=4,
    amplitude=.9,
    post length=0pt,
    pre length=0pt
}] (0,0) -- (0.4,0);}\ }

\title{The Myhill isomorphism theorem does not generalize much}

\author[1]{Cécilia Pradic}
 
\affil[1]{Computer Science Department, Swansea University, Fabian Way, Crymlyn Burrows, Swansea, Wales}

\corrauthor{Cécilia Pradic}{c.pradic@swansea.ac.uk}

\thanks{%
Thank you to Eike Neumann, Arno Pauly and Manlio Valenti for substantial discussions
on this work and to Andrej Bauer for asking a motivating question and encouragements.
}

\abstract{%
The Myhill isomorphism is a variant of the Cantor-Bernstein
theorem. It states that, from two injections that reduces two
subsets of $\bN$ to each other, there exists a bijection $\bN \to \bN$
that preserves them. 
This theorem can be proven constructively.

We investigate to which extent the theorem can be extended to other infinite
sets other than $\bN$. We show that, assuming Markov's principle, the theorem
can be extended to the conatural numbers $\Ninfty$ provided that we only require
that bicomplemented sets are preserved by the bijection. This restriction is
essential. Otherwise, the picture is overall negative: 
among other things, it is impossible to extend that result to either
$2 \times \Ninfty$, $\bN + \Ninfty$, $\bN \times \Ninfty$, $\Ninfty^2$,
$\Cantor$ or $\Baire$.
}

\begin{document}

\maketitle
The Myhill isomorphism theorem is a constructive refinement of the Cantor-Bernstein
theorem which states the following: for any two sets $A, B \subseteq \bN$
and injections $f, g : \bN \to \bN$ with $A = f^{-1}(B)$ and $B = g^{-1}(A)$,
there is a bijection $h : \bN \to \bN$ that restricts to a bijection between $A$ and $B$.
Since it is provable constructively, this statement remains true if we further
require that $h$ is uniformly computable from $f$ and $g$.
Typical proofs of Cantor-Bernstein exhibit non-constructive but explicit bijections $h$ that
would also preserve $A$ and $B$; hence the whole point is that this theorem
can be proven constructively via a more sophisticated back-and-forth argument.
Andrej Bauer asked to which extent this theorem can be constructively extended to other sets than
$\mathbb{N}$.
To be precise, he proposed the following definition\footnote{See \url{https://mathstodon.xyz/@andrejbauer/114427716438955347}.}.

\begin{definition}
Given a set $X$, subsets $A, B \subseteq X$, say that $A \preceq^X B$ ($A$ \emph{reduces to} $B$)
if there is a function $f : X \to X$ such that $A = f^{-1}(B)$.
Say that $A \preceq_1^X B$ ($A$ \emph{injectively} reduces to $B$)
if $A \preceq^X B$ via an injective $f$ and that $A \simeq^X B$ ($A$ \emph{bijectively} reduces to $B$)
if we have a bijection $f : X \to X$ that restricts to a bijection between $A$ and $B$.

We say that $X$ has the \emph{Myhill property} if for every $A, B \subseteq X$,
$A \preceq_1^X B$ and $B \preceq_1^X A$ imply $A \simeq^X B$.
\end{definition}

The Myhill isomorphism theorem states that $\mathbb{N}$ has the Myhill property,
and this result can be extended to discrete enumerable sets using the same
proof idea~\cite[Theorem 3.3]{FJS23}. Bauer specifically asked:
\begin{enumerate}
  \item \label{enumitem:ninftymyhill} does the set of conatural numbers $\mathbb{N}_\infty$ have the
Myhill property?
\item \label{enumitem:myhillstable} if $X$ and $Y$ have the Myhill property, is it necessarily the case of 
$X + Y$, $X \times Y$ and $X^Y$?
\end{enumerate}

We mostly answer these questions negatively. In~\Cref{sec:negative},
we show that in Kleene-Vesley realizability, $\Ninfty$ does \emph{not} have
the Myhill property.
We then show that stability under $X + Y$ and $X \times Y$ of the Myhill property
entail that
excluded middle can be lifted from $\Sigma^0_1$ formulas to all sentences.
In~\Cref{sec:positive}, we attempt to salvage the situation for $\Ninfty$
somewhat: we show that if we relax our expectations so that if we only require
that the bijection be a reduction between the bicomplements of $A$ and $B$, we
can build it assuming only Markov's principle. The proof strategy is essentially
an elaboration of the one for $\bN$ and allows to show that $\Ninfty$ has
what we call the $\neg\neg$ strong Myhill property.
Finally, in~\Cref{sec:negnegative}, we show that this notion
is not much better behaved by exhibiting some nice sets that do not have the
$\neg\neg$ strong Myhill property and showing that stability of the $\dneg$ Myhill
property under $\times, +$
and exponentiation turns Markov's principle into excluded middle.

\subparagraph*{Foundational and notational preliminaries}
We work informally in intuitionistic Zermelo set theory~\cite{myhill1973some}
where we restrict comprehension to bounded comprehension and we do not assume
foundation. Stylistic issues aside, we could also work in the internal logic
of toposes~\cite[Chapter VI]{MacLaneMoerdijk}.
Among other things, this means that our universe of sets is closed under pairing, union, powerset
(which we write $\powerset(-)$), set comprehension $\{ x \in A \mid \varphi \}$
for bounded formulas $\varphi$ and contains the set of natural numbers $\bN$.
For sets $A$ and $B$, function spaces $B^A$, cartesian products $A \times B$
and disjoint unions $A + B$ are built as usual.
We write $\inl:A\to A+B$ and $\inr:B\to A+B$ for the usual injections into disjoint unions,
and given $f : A \to Z$ and $g : B \to Z$, we write $[f,g]$ for the copairing
$A + B \to Z$ such that $[f,g](\inl(a)) = f(a)$ and $[f,g](\inr(b)) = g(b)$.
In particular, for every $x\in A+B$ there is either an $a\in A$
such that $x=\inl(a)$ or $b\in B$ such that $x=\inr(b)$.
We write $\pi_i$ for the projection $A_1 \times A_2 \to A_i$ for $i = 0, 1$.

We regard natural numbers as ordinals throughout (i.e.,  $n = \{0, \ldots, n - 1\}$)
and sometimes write $\Succ : \bN \to \bN$ for the map $n \mapsto n + 1$.
If $x \in X$, write $x^\omega$ for the constant sequence equal to $x$ of $X^\bN$,
and if $p \in X^\bN$, we may write $px$ for the sequence $q$ with $q_0 = p$ and
$q_{n+1} = p_n$.

In very few occasions\footnote{To be more precise, in~\Cref{lem:transfermodestcountable}, \Cref{cor:KVnotNinftyMP}, \Cref{rem:modest} and \Cref{cor:dneg2NinftyMP}.},
we will offer meta-theoretic theorems and proofs on the validity
of certain statements that rely on realizability models, especially Kleene-Vesley realizability.
Those parts are self-contained and not required to read the rest of the text, so we don't spend much
ink introducing realizability. The interested reader can refer to~\cite{BauerPhD,VanOosten} for an introduction
of the relevant notions that includes the construction of realizability toposes, including the Kleene-Vesley topos
$\mathbf{RT}(\mathcal{K}_2^{\mathrm{rec}}, \mathcal{K}_2)$
and the definitions of partitioned and modest sets therein.

\section{Background}

\subsection{The conatural numbers}

The set of conatural numbers $\Ninfty$ can be characterized up to isomorphism
as a final coalgebra for the functor $X \mapsto 1 + X$ (while, dually, $\bN$ is
an initial algebra for it).
It is also 
sometimes referred to as the one-point-compactification of $\mathbb{N}$,
completing $\mathbb{N}$ with a ``point at infinity''.
We recall here some of its relevant
properties; we omit the easier proofs and refer the reader to~\cite{Esc13,type-topology} for
more systematic expositions.

We officially define $\Ninfty \subseteq 2^{\mathbb{N}}$ as the set of bit sequences that
contain at most a single $1$. The natural number injects into $\Ninfty$ in the
obvious way by taking $n$ to the sequence $0^n 1 0^\omega \in 2^\bN$; write
that injection $n \mapsto \underline{n}$ and its image $\underline{\bN}$.
The point at infinity is the constant $0$ sequence $0^\omega$ that we write
$\infty$ in the sequel.
Classically, $\Ninfty$ is the disjoint union of $\underline{\bN}$
and $\{\infty\}$ and thus isomorphic to $\bN$.
This is not constructively valid: $\bN \cong \Ninfty$ is equivalent to
the limited principle of omniscience, i.e. excluded middle restricted to
$\Sigma^0_1$ formulas.

\begin{definition}[Limited principle of omniscience ($\LPO$)]
 \label{def:lpo}
For any $p \in \Cantor$, either $p = 0^\omega$ or $\exists n \in \bN. ~ p_n = 1$.
\end{definition}

Note that by considering the canonical map $2^\bN \to \Ninfty$ that suppresses
all further occurrences of $1$ after the first, $\LPO$ is equivalent to
$\forall p \in \Ninfty. \; p = \infty \vee  p \in \underline{\bN}$.
Further that $\Ninfty$ admits an $\epsilon$ operator for decidable predicates.

\begin{theorem}[{\hspace{-0.05em}\cite[Theorem 3.15]{Esc13}}]\label{thm:Ninftysel}
  There is a function $\varepsilon : 2^{\Ninfty} \to\Ninfty$
  such that for every $Q\in 2^{\Ninfty}$,
  if $Q(\varepsilon(Q)) = 1$, then
  $\forall p\in \Ninfty. ~ Q(p) = 1$.

  In particular, for any decidable predicate $P$ over $\Ninfty$, we can
  decide whether $\exists x \in \Ninfty. \; P(x)$ or $\forall x \in \Ninfty. ~ \neg P(x)$
\end{theorem}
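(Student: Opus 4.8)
The plan is to define $\varepsilon$ by an explicit formula on bit sequences and then verify the required property by a case analysis that exploits the ``at most one $1$'' constraint defining $\Ninfty$. Concretely, given $Q \in 2^{\Ninfty}$ I would set $\varepsilon(Q) \in 2^\bN$ to be the sequence whose $n$-th bit is
\[ \varepsilon(Q)_n = 1 \quad\text{iff}\quad Q(\underline{n}) = 0 \ \text{ and }\ Q(\underline{k}) = 1 \text{ for all } k < n, \]
i.e. $\varepsilon(Q)$ marks the least $n$ (if any) at which $Q$ fails on the finite conaturals. This condition is decidable for each fixed $n$, being a boolean value together with a bounded conjunction of boolean values, so $\varepsilon(Q)$ is a well-defined element of $2^\bN$.

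First I would check that $\varepsilon(Q) \in \Ninfty$, i.e. that it carries at most one $1$: if $\varepsilon(Q)_m = \varepsilon(Q)_{m'} = 1$ with $m < m'$, then the clause for $m'$ forces $Q(\underline{m}) = 1$ whereas the clause for $m$ forces $Q(\underline{m}) = 0$, a contradiction. Next, assuming $Q(\varepsilon(Q)) = 1$, I would prove $Q(\underline{m}) = 1$ for every $m$. Fix $m$ and decide by a finite search whether $\varepsilon(Q)_k = 1$ for some $k \le m$. If so, then since $\varepsilon(Q)$ has its unique $1$ at that $k$ we have $\varepsilon(Q) = \underline{k}$, whence $Q(\varepsilon(Q)) = Q(\underline{k}) = 0$, contradicting the hypothesis; so in fact $\varepsilon(Q)_k = 0$ for all $k \le m$, and a short induction on $k \le m$ (peeling off the defining clause and using the induction hypothesis to discharge the bounded conjunction) yields $Q(\underline{k}) = 1$ for each such $k$, in particular $Q(\underline{m}) = 1$. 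Consequently $\varepsilon(Q)_n = 0$ for all $n$, so $\varepsilon(Q) = \infty$ and $Q(\infty) = 1$ as well.

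The step I expect to be the crux is extending ``$Q$ holds on $\underline{\bN} \cup \{\infty\}$'' to ``$Q$ holds on all of $\Ninfty$'', since one might fear this requires a continuity axiom or Markov's principle. It does not, and the reason is exactly the shape of $\Ninfty$: take an arbitrary $p \in \Ninfty$ and, since $Q(p) \in 2$ is decidable, suppose toward a contradiction that $Q(p) = 0$. For each $n$ we then have $Q(p) \ne Q(\underline{n})$, so $p \ne \underline{n}$; but for $p \in \Ninfty$ the equality $p = \underline{n}$ is equivalent to the decidable statement $p_n = 1$, so $p \ne \underline{n}$ gives $p_n = 0$. As this holds for every $n$, we get $p = 0^\omega = \infty$, contradicting $Q(p) \ne Q(\infty)$. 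Hence $Q(p) \ne 0$, and since $Q(p)$ is a boolean this means $Q(p) = 1$, which closes the argument.

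Finally, for the ``in particular'' clause I would apply the above to $Q := \neg P$ (the characteristic function of the complement, which is decidable since $P$ is): evaluating the boolean $Q(\varepsilon(Q))$, the value $1$ yields $\forall x.\, \neg P(x)$ by the main property, while the value $0$ means $P(\varepsilon(Q))$ holds and so exhibits a witness for $\exists x.\, P(x)$.
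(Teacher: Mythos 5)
Your proof is correct, and it is essentially the standard argument from the cited source: the paper itself gives no proof, deferring to Escard\'o~\cite[Theorem 3.15]{Esc13}, whose selection function is likewise the ``least counterexample among $\underline{0}, \underline{1}, \ldots$, defaulting to $\infty$'' construction, which you have merely transcribed from Escard\'o's decreasing-sequence representation of $\Ninfty$ into this paper's at-most-one-$1$ encoding. Your handling of the one delicate step --- extending $Q = 1$ from $\underline{\bN} \cup \{\infty\}$ to all of $\Ninfty$ via decidability of equality on $2$ and the observation that $p \neq \underline{n}$ forces $p_n = 0$, with no appeal to Markov's principle --- is exactly right and is the same density argument used in the cited proof.
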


In particular, this means that decidable predicates are stable under
quantification over elements of $\Ninfty$.
However, it is important to note that $\Ninfty$ is not constructively discrete, that is,
equality between two $x, y \in \Ninfty$ is not computably decidable: the issue is that
as long as we only see zeroes in a sequence, we have no reason to stop looking
for a one. 

At times, we will also want to use that.
$\Ninfty \setminus \{\infty\} = \underline{\bN}$.
Only the right-to-left inclusion can be shown to
hold.
The left-to-right inclusion holding is equivalent to \emph{Markov's principle}.

\begin{definition}[Markov's principle ($\MP$)]
  \label{def:mp}
For every $p \in 2^\bN$, if
$p \neq 0^\omega$, then there exists $n \in \mathbb{N}$ such that $p_n = 1$.
\end{definition}

While $\MP$ cannot be proven from intuitionistic set theory, unlike other
classically axioms we discuss in this paper, it is true in
Kleene realizability. But it does not hold in topological models inspired by
Brouwer's intuitionism so different school of constructivism have different
attitudes towards it.

Without further axioms, one cannot construct functions $\Ninfty \to \Ninfty$
which are discontinuous. In fact, the existence of explicit witnesses of
discontinuity for some function even allows to compute $\LPO$.

\begin{definition}
Say that a point $x \in X$ is \emph{isolated} in $X$ if $\forall y \in X. \; x = y \vee x \neq y$.
\end{definition}

\begin{lemma}
  \label{lem:inftyToIsolated}
If we have an injective map $f : \Ninfty \to X$ which maps $\infty$ to an
isolated point, then $\LPO$ holds.
\end{lemma}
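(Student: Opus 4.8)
The plan is to extract $\LPO$ from the one piece of data that isolation of $f(\infty)$ supplies: the decidability of the equality $y = f(\infty)$. As recalled after \Cref{def:lpo}, $\LPO$ is equivalent to $\forall p \in \Ninfty.\; p = \infty \vee p \in \underline{\bN}$, and an arbitrary $p \in \Cantor$ can be transported into $\Ninfty$ by the canonical map that deletes every $1$ after the first; write $\overline{p} \in \Ninfty$ for the result, so that $\overline{p} = \infty$ iff $p = 0^\omega$ and $\overline{p} \in \underline{\bN}$ iff $p$ carries a $1$.

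First I would settle the $0^\omega$-test. Given $p \in \Cantor$, I compute $f(\overline{p})$ and, using that $f(\infty)$ is isolated, decide $f(\overline{p}) = f(\infty)$ or $f(\overline{p}) \neq f(\infty)$. In the positive case injectivity of $f$ forces $\overline{p} = \infty$, hence $p = 0^\omega$; in the negative case $\overline{p} = \infty$ would yield $f(\overline{p}) = f(\infty)$, so we obtain $\overline{p} \neq \infty$, i.e.\ $\neg(p = 0^\omega)$. Thus the hypotheses already decide, for every $p$, whether $p = 0^\omega$ or $\neg(p = 0^\omega)$.

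The step I expect to be the main obstacle is promoting the negative branch $\neg(p = 0^\omega)$ to the witnessed disjunct $\exists n.\; p_n = 1$ that \Cref{def:lpo} demands: this is precisely an instance of Markov's principle (\Cref{def:mp}), and a bare negation is not yet an index. My plan is to manufacture the witness using \Cref{thm:Ninftysel}: the $0^\omega$-test just established makes the predicate $Q(x) = 1 \iff x \wedge \overline{p} \neq 0^\omega$ decidable on $\Ninfty$, and \Cref{thm:Ninftysel} then decides $\exists x.\; Q(x) = 1$ (forcing $\overline{p} \neq \infty$, with a selected witnessing point) versus $\forall x.\; \neg Q(x)$ (forcing $\overline{p} = \infty$, i.e.\ $p = 0^\omega$). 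The genuinely delicate point on which the lemma turns is reading an actual $n \in \bN$ off the selected element of $\Ninfty$, rather than merely recovering $\overline{p} \neq \infty$; converting this $\Ninfty$-level existence into an $\bN$-level one is where the argument must spend its effort.
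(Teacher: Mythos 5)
Your opening move is the right one, and it is where any proof of this lemma must start: since $f(\infty)$ is isolated and $f$ is injective, testing $f(\overline{p}) = f(\infty)$ decides $p = 0^\omega \vee \neg(p = 0^\omega)$ for every $p \in \Cantor$. But that statement is the \emph{weak} limited principle of omniscience (${\sf WLPO}$), not $\LPO$, and your proposal never closes the gap that you yourself flag at the end. The detour through \Cref{thm:Ninftysel} makes no progress: for your predicate $Q(x) = 1 \iff x \wedge \overline{p} \neq 0^\omega$, the decision ``$\exists x.\, Q(x) = 1$ versus $\forall x.\, Q(x) = 0$'' is equivalent to the decision you already made (note $\forall x.\, Q(x) = 0$ holds iff $Q(\overline{p}) = 0$ iff $\overline{p} = \infty$ iff $p = 0^\omega$), and in the positive branch the selection function hands you an element of $\Ninfty$ known only to be different from $\infty$. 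Turning a non-$\infty$ element of $\Ninfty$ into a numeral $n$ with a $1$ at position $n$ is verbatim the instance of $\MP$ you set out to manufacture --- the paper states explicitly that $\Ninfty \setminus \{\infty\} \subseteq \underline{\bN}$ is equivalent to Markov's principle. So the ``genuinely delicate point'' you defer is not a finishing touch; it is the entire content of the lemma, and your proof as written establishes only ${\sf WLPO}$.

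Moreover, no cleverer choice of decidable predicate can rescue the plan. The hypothesis of the lemma is equivalent to ``$\infty$ is isolated in $\Ninfty$'' (pull the decidability of equality with $f(\infty)$ back along the injection $f$; conversely take $f = \id$), and isolation of $\infty$ is equivalent to ${\sf WLPO}$, while $\LPO$ is equivalent to ${\sf WLPO}$ plus $\MP$; since ${\sf WLPO}$ does not imply $\MP$ (separations of this kind are the subject of~\cite{HL16}), the witness extraction cannot be done from the stated hypotheses alone. For comparison: the paper omits the proof of this lemma (it falls under the ``easier proofs'' deferred to~\cite{Esc13,type-topology}), and in its principal early application, \Cref{lem:badinjectionLPO}, Markov's principle is explicitly in scope. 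With $\MP$ available, your own first step finishes the proof in one line --- in the negative branch, \Cref{def:mp} converts $\neg(p = 0^\omega)$ into the required $\exists n.\, p_n = 1$ --- and the selection-function machinery becomes unnecessary. So the repair is either to add $\MP$ as a hypothesis (matching how the lemma is actually used) or to weaken the conclusion to the decidability statement you actually proved; as it stands, the proposal has a genuine, and by these means unfillable, gap.
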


Noting that every $x \in \underline{\bN}$ is isolated in $\Ninfty$, we can show
the following.

\begin{lemma}
  \label{lem:badinjectionLPO}
Assuming $\MP$, if $f : \Ninfty \to \Ninfty$ is injective such
that $f(\infty) \neq \infty$, then $\LPO$ holds.
\end{lemma}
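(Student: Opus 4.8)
The plan is to reduce to \Cref{lem:inftyToIsolated}: since $f$ is already assumed injective, it suffices to show that the point $f(\infty)$ is isolated in $\Ninfty$. The key observation is that, under $\MP$, the hypothesis $f(\infty) \neq \infty$ forces $f(\infty)$ to lie in $\underline{\bN}$, and we already noted that every such point is isolated.

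More concretely, I would first unfold what $f(\infty) \neq \infty$ means at the level of bit sequences. Since $\infty = 0^\omega$ and equality in $\Ninfty$ is inherited from $2^\bN$, the assumption says precisely that $f(\infty) \neq 0^\omega$ in $2^\bN$. Applying $\MP$ (in the form of \Cref{def:mp}) to the sequence $f(\infty)$ then yields some $n \in \bN$ with $f(\infty)_n = 1$. Because $f(\infty) \in \Ninfty$ contains at most one $1$ and individual bits are decidable, every other coordinate must be $0$; hence $f(\infty) = 0^n 1 0^\omega = \underline{n}$, so $f(\infty) \in \underline{\bN}$.

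Finally, invoking the remark preceding the lemma that every element of $\underline{\bN}$ is isolated in $\Ninfty$, we conclude that $f(\infty) = \underline{n}$ is isolated. Thus $f : \Ninfty \to \Ninfty$ is an injection sending $\infty$ to an isolated point, and \Cref{lem:inftyToIsolated} yields $\LPO$.

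I expect no serious obstacle here; the argument is essentially a short chain through the facts already established. The only points requiring minor care are that ``at most one $1$'' together with decidability of bit values is what lets us pass from ``some coordinate equals $1$'' to ``the sequence is exactly $\underline{n}$'', and that $\MP$ is genuinely needed because, constructively, $f(\infty) \neq 0^\omega$ does not by itself produce a witnessing coordinate $n$.
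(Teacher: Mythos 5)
Your proposal is correct and follows the paper's own proof exactly: Markov's principle turns $f(\infty) \neq \infty$ into $f(\infty) = \underline{n}$ for some $n \in \bN$, and since $\underline{n}$ is isolated, \Cref{lem:inftyToIsolated} applies. The extra details you supply (extracting the witnessing coordinate via $\MP$ and using ``at most one $1$'' to pin down $f(\infty) = \underline{n}$) are just an unfolding of the step the paper states in one line.
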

\begin{proof}
By Markov's principle, there is $n \in \bN$ such that $f(\infty) = \underline{n}$.
Since $\underline{n}$ is isolated in $\Ninfty$, we can apply~\Cref{lem:inftyToIsolated}.
\end{proof}

There is a natural order on $\Ninfty$: for $x, y \in \Ninfty$, we say that
$x \le y$ if, for ever
$\left(\forall k \le n. \; y_k = 0\right) \Rightarrow x_n = 0$. The canonical
map $\bN \to \Ninfty$ is a monotone embedding and $\infty$ is the maximal element.
We also have the following characterization of the order that essentially
says that $\underline{\bN}$ is dense.

\begin{lemma}
  \label{lem:NinftyOrdYon}
For $x, y \in \Ninfty$, $x \le y$ is equivalent to
$\forall n \in \bN. \; \underline{n} \le x \Rightarrow \underline{n} \le y$.
\end{lemma}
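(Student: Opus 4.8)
The plan is to translate the order relation into a condition on finite prefixes of bit sequences, after which the equivalence becomes a routine manipulation of bounded quantifiers. The key preliminary step is a bit-level description of when a numeral lies below a conatural number: unfolding the definition of $\le$ on $\underline{n}$, and using that the unique $1$ of $\underline{n}$ sits at position $n$, one obtains
\[
  \underline{n} \le x \iff \forall k < n.\; x_k = 0 .
\]
Indeed, the defining clause for $\underline{n} \le x$ is vacuous at every position $\ge n$ and reduces exactly to $x_k = 0$ at each position $k < n$. Symmetrically, the condition $x \le y$ says precisely that, for each $m$, the vanishing of $x$ on $\{0, \ldots, m\}$ forces $y_m = 0$. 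Since each atomic statement $x_k = 0$ is decidable and all the index quantifiers are bounded, both characterizations are constructively unproblematic.

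For the forward implication I would argue directly from these characterizations (transitivity of $\le$ also gives it immediately via the chain $\underline{n} \le x \le y$). Assume $x \le y$ and $\underline{n} \le x$, i.e.\ $\forall k < n.\ x_k = 0$; for any $j < n$ we then have $\forall k \le j.\ x_k = 0$ since $k \le j < n$, so the defining clause of $x \le y$ at $j$ yields $y_j = 0$, whence $\forall k < n.\ y_k = 0$, that is $\underline{n} \le y$. For the converse, assume $\forall n.\ (\underline{n} \le x \Rightarrow \underline{n} \le y)$ and fix $m$ with $\forall k \le m.\ x_k = 0$; this is exactly $\underline{m+1} \le x$, so the hypothesis gives $\underline{m+1} \le y$, i.e.\ $\forall k < m+1.\ y_k = 0$, and in particular $y_m = 0$. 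As $m$ was arbitrary, this is precisely $x \le y$.

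The only delicate point is the bookkeeping: the defining clauses of $\le$ range over $k \le m$, whereas the numeral characterization ranges over $k < n$, so one must align the two through the shift $n = m+1$. There is no genuine constructive obstacle here, since every atomic predicate involved ($x_k = 0$, $y_m = 0$) is decidable and the quantifiers over indices are all bounded; the equivalence therefore holds outright, without any appeal to $\MP$ or $\LPO$.
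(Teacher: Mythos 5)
Your proof is correct, and the paper in fact omits a proof of this lemma entirely (it is among the ``easier proofs'' deferred to the literature), so your direct unfolding of the bit-level definitions is exactly the intended routine argument. One point worth noting: you have implicitly (and correctly) read the defining clause of $\le$ as $(\forall k \le n.\; x_k = 0) \Rightarrow y_n = 0$, whereas the paper's displayed clause transposes $x$ and $y$ --- a typo, since the literal reading would make $\infty$ the \emph{minimum} rather than the maximum of $\Ninfty$ and contradict monotonicity of $n \mapsto \underline{n}$; your reading is the one consistent with the rest of the paper. Your closing remark about decidability of the atoms is harmless but superfluous: both directions are pure bounded-quantifier instantiations (plus the shift $n = m+1$), so the equivalence is intuitionistically valid without invoking decidability at all.
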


We may also compute binary infima $\inf : \Ninfty \times \Ninfty \to \Ninfty$
(as well as suprema), which equips $\Ninfty$ with a lattice structure. However,
we cannot constructively show that the order $\le$ is total over $\Ninfty$.

\begin{definition}[Lesser limited principle of omniscience $(\LLPO)$]
\[\forall p \in \Ninfty. \; (\forall i \in \bN. \; p_{2i} = 0) \vee (\forall i \in \bN. \; p_{2i+1} = 0)\]
\end{definition}
\begin{proposition}
Totality of $\le$ over $\Ninfty$, i.e. $\forall x \; y \in \Ninfty. \; x \le y \vee y \le x$, is
equivalent to $\LLPO$.
\end{proposition}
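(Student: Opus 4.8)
The plan is to prove both implications, and throughout to replace the order by the reformulation coming from \Cref{lem:NinftyOrdYon}. First I would record the elementary fact that $\underline{n} \le x$ holds precisely when the first $n$ bits of $x$ vanish, i.e. $\forall k < n.\, x_k = 0$ (since $\underline n$ has its single $1$ exactly at position $n$). Feeding this into \Cref{lem:NinftyOrdYon} gives the working characterization
\[ x \le y \quad\Longleftrightarrow\quad \forall n \in \bN.\ (\forall k < n.\, x_k = 0) \Rightarrow (\forall k < n.\, y_k = 0), \]
which intuitively says that $x$ ``fires'' no later than $y$. I would use freely that over a bounded range the predicate $\forall k < n.\, x_k = 0$ is decidable, so that its negation yields an actual witness $\exists k < n.\, x_k = 1$, and that equality of bits is decidable.

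For the direction totality $\Rightarrow \LLPO$, given $p \in \Ninfty$ I split it into its even and odd subsequences $x_i = p_{2i}$ and $y_i = p_{2i+1}$; since $p$ has at most one $1$, so do $x$ and $y$, hence $x, y \in \Ninfty$. I would then show $x \le y \Rightarrow (\forall i.\, p_{2i+1} = 0)$ and symmetrically $y \le x \Rightarrow (\forall i.\, p_{2i} = 0)$, so that applying totality to $x, y$ produces exactly the $\LLPO$ disjunction for $p$. To prove the first implication, assume $x \le y$, fix $i_0$, and refute $p_{2i_0+1} = 1$: if $p_{2i_0+1} = 1$ then, as no even index equals the odd index $2i_0+1$, the at-most-one-$1$ property forces all even bits of $p$ to vanish, so $x = \infty$; the characterization of $x \le y$ then forces the first $i_0 + 1$ bits of $y$ to vanish, giving $y_{i_0} = p_{2i_0+1} = 0$, a contradiction. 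Decidability of bits turns this refutation into $p_{2i_0+1} = 0$.

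For the converse $\LLPO \Rightarrow$ totality, given $x, y \in \Ninfty$ I would build a single $p \in \Ninfty$ encoding ``which of $x, y$ fires first'', with the parity of the firing position recording the winner and a tie-break favouring $y$. Explicitly, set $p_{2n} = 1 \iff (x_n = 1 \wedge \forall k \le n.\, y_k = 0)$ and $p_{2n+1} = 1 \iff (y_n = 1 \wedge \forall k < n.\, x_k = 0)$. Applying $\LLPO$ to $p$, the disjunct $\forall i.\, p_{2i} = 0$ unfolds (using decidability) to ``$x_n = 1 \Rightarrow \exists k \le n.\, y_k = 1$'', which via the characterization yields $y \le x$; the disjunct $\forall i.\, p_{2i+1} = 0$ similarly yields $x \le y$.

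The crux, and the step I expect to be the main obstacle, is checking constructively that this $p$ really lies in $\Ninfty$, i.e. has at most one $1$. Given $p_j = p_{j'} = 1$ I would case on the parities of $j$ and $j'$: two even indices both force $x$ to fire there and so agree since $x \in \Ninfty$, two odd indices agree via $y \in \Ninfty$, and the mixed case (say $j = 2n$, $j' = 2m+1$) is impossible — the even clause gives $y_k = 0$ for $k \le n$ while the odd clause gives $y_m = 1$, forcing $m > n$, yet the odd clause also gives $x_k = 0$ for $k < m$ and hence $x_n = 0$, contradicting the even clause's $x_n = 1$. The real care is to keep every case split on decidable data (parities and bounded quantifiers) so the argument stays constructive; once $p \in \Ninfty$ is secured the remainder is routine unfolding.
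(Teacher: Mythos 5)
Your proof is correct and complete: both unfoldings are sound, the interleaved sequence $p$ is built from bounded, decidable conditions so it is constructively well-defined, and your parity case analysis does establish $p \in \Ninfty$ — which is exactly the point that needs care, since the paper's $\LLPO$ is stated only for $p \in \Ninfty$, and your asymmetric tie-break ($\forall k \le n$ versus $\forall k < n$) is what makes the mixed-parity case impossible. The paper states this proposition without proof, and your argument (even/odd splitting for totality $\Rightarrow \LLPO$, interleaving with a tie-break for the converse) is the standard one it presumably intends; note only that your working fact $\underline{n} \le x \Leftrightarrow \forall k < n.\ x_k = 0$ silently adopts the intended reading of the order — the paper's displayed definition transposes the roles of $x$ and $y$ — which is the reading consistent with \Cref{lem:NinftyOrdYon}, with monotonicity of $n \mapsto \underline{n}$, and with $\infty$ being the maximal element.
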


In particular, we cannot in general show that $\inf(x,y) = x \vee \inf(x,y) = y$.
However, assuming Markov's principle, if we have the guarantee that $x \neq y$,
we can say more.

\begin{lemma}
  \label{lem:MPmin}
Assume $\MP$.
For any $x, y \in \Ninfty$ such that $x \neq y$ or $x \neq \infty$, $\inf(x,y) = x$ or $\inf(x,y) = y$.
Furthermore, $\inf(x,y) \in \underline{\bN}$.
\end{lemma}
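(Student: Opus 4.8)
The plan is to use $\MP$ to extract from the hypothesis a concrete natural number that pins one of $x, y$ down to a specific element of $\underline{\bN}$, after which the comparison becomes a finite, decidable computation. I will use freely that, under $\MP$, $\Ninfty \setminus \{\infty\} = \underline{\bN}$, and I recall that $\inf(x,y)$ is computed by taking the pointwise maximum of $x$ and $y$ (as elements of $\Cantor$) and then applying the canonical map that suppresses every $1$ after the first; concretely, the unique $1$ of $\inf(x,y)$, if any, sits at the earlier of the first-$1$ positions of $x$ and $y$.

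First I would eliminate the disjunction $x \neq y \vee x \neq \infty$. If $x \neq \infty$, then $x \in \underline{\bN}$ by $\MP$, so $x = \underline{m}$ for some explicit $m \in \bN$. If instead $x \neq y$, I regard both as elements of $\Cantor$ and apply $\MP$ to the bit sequence $n \mapsto \lvert x_n - y_n\rvert$, which differs from $0^\omega$ precisely because $x \neq y$; this produces an explicit index $n$ with $x_n \neq y_n$. As $x_n$ and $y_n$ are bits, exactly one of them equals $1$, so one of $x, y$ equals $\underline{n}$. Since $\inf(x,y) = \inf(y,x)$ and both conclusions are symmetric in $x$ and $y$, I may assume without loss of generality that $x = \underline{m}$ for a concrete $m$.

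With $x = \underline{m}$ fixed, the location of the first $1$ of $y$ relative to $m$ is decidable, since it inspects only the finitely many bits $y_0, \dots, y_m$: either $y_j = 1$ for some (then necessarily unique) $j \le m$, or $y_j = 0$ for all $j \le m$. In the first case $y = \underline{j}$ and $\inf(x,y) = \underline{\min(m,j)} = \underline{j} = y$; in the second case the first $1$ of $y$, if any, lies beyond position $m$, whence $\inf(x,y) = \underline{m} = x$. In either case $\inf(x,y) \in \{x, y\} \subseteq \underline{\bN}$, which establishes both assertions at once.

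The only genuinely non-constructive step, and the one I expect to be the crux, is converting the apartness $x \neq y$ into an actual witnessing index: this is exactly where $\MP$ is indispensable, in keeping with the facts that $\Ninfty$ is not discrete and that totality of $\le$ already amounts to $\LLPO$. The remaining ingredients — that $\inf$ realises the minimal first-$1$ position and that the symmetrisation through $\inf(x,y) = \inf(y,x)$ is legitimate — are routine consequences of the definitions.
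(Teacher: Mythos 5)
Your proof is correct, and it fills a gap the paper deliberately leaves open: \Cref{lem:MPmin} is stated without proof, falling under the ``easier proofs'' that the paper omits with a pointer to~\cite{Esc13,type-topology}. Your route is the natural one and matches the surrounding structure of the paper: the disjunctive hypothesis plus $\MP$ pins one of $x, y$ down to a concrete $\underline{m} \in \underline{\bN}$ --- either directly via $\Ninfty \setminus \{\infty\} = \underline{\bN}$ (which the paper states is equivalent to $\MP$), or by applying $\MP$ to the pointwise difference sequence, which is legitimate since that sequence is $0^\omega$ exactly when $x = y$ --- and after that the comparison is decided by inspecting the finitely many bits $y_0, \ldots, y_m$. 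That last step is precisely the trichotomy the paper records as \Cref{lem:minwoMP}, stated immediately after and explicitly $\MP$-free, in agreement with your closing observation that the only non-constructive step is converting apartness into a witness; you re-derive it by hand from the explicit description of $\inf$ (pointwise OR followed by suppressing later $1$s), which is fine, as that construction visibly yields the greatest lower bound and infima are unique. One wording slip to fix: in your final sentence, the inclusion $\{x,y\} \subseteq \underline{\bN}$ can fail, since in your second case $y$ may be $\infty$; what your case analysis actually establishes, and all the lemma needs, is that $\inf(x,y)$ itself is exhibited as $\underline{j}$ or $\underline{m}$, hence lies in $\underline{\bN}$.
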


If we further
assume that either $x$ or $y$ is in $\underline{\bN}$,
Markov's principle is not needed.

\begin{lemma}
  \label{lem:minwoMP}
For any $x \in \Ninfty$ and $n \in \bN$, we have either that
\begin{itemize}
 \item $x < \underline{n}$ and $x \in \underline{\bN}$
 \item or $x = \underline{n}$
 \item or $x > \underline{n}$.
\end{itemize}
\end{lemma}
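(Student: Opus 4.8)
The plan is to observe that the required trichotomy can be decided by inspecting only the finite initial segment $x_0, \dots, x_n$ of $x$, so that no instance of $\MP$ or $\LPO$ is needed: the whole point is that the bit which distinguishes $\underline{n}$ from every other conatural sits at the \emph{known}, finite position $n$, so there is never a need to keep searching indefinitely (which is exactly what forces Markov's principle in \Cref{lem:MPmin}).

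First I would decide the bounded predicate $\forall k \le n.\ x_k = 0$, which is legitimate since each $x_k \in 2$ and equality on $2$ is decidable, so a bounded universal quantification over a decidable predicate is again decidable. If it holds, I place $x$ in the third case: by the order characterisation (equivalently, \Cref{lem:NinftyOrdYon} together with the fact that $\underline{\cdot}$ is a monotone embedding), $\underline{n} \le x$ amounts to $x$ carrying no $1$ strictly before position $n$, which is given, while $x \neq \underline{n}$ follows from $x_n = 0 \neq 1 = (\underline{n})_n$; hence $x > \underline{n}$.

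Otherwise $\exists k \le n.\ x_k = 1$, and by bounded search I would extract the least such $k$. Here I would invoke the defining property of $\Ninfty$ — that a member carries at most one $1$ — to conclude that every other bit of $x$ vanishes, so that $x = \underline{k}$ and in particular $x \in \underline{\bN}$. Comparing $k$ with $n$ in $\bN$ (which is discrete, so this disjunction is decidable), either $k < n$, giving $x = \underline{k} < \underline{n}$ by monotonicity of the embedding $\bN \hookrightarrow \Ninfty$ (first case), or $k = n$, giving $x = \underline{n}$ (second case). The three outcomes are mutually exclusive and, by construction, exhaustive.

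The only genuinely delicate step is the passage from a single bit $x_k = 1$ to the global identity $x = \underline{k}$: this is precisely where the ``at most one $1$'' constraint on elements of $\Ninfty$ is used, and it is constructive because the negated equalities $x_j \neq 1$ for $j \neq k$ upgrade to $x_j = 0$ by decidability of $2$. Everything else is bounded, decidable case analysis, which is why — unlike \Cref{lem:MPmin} — the statement holds outright, with no appeal to Markov's principle.
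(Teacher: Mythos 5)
Your proof is correct: the paper states this lemma without proof (it is among the ``easier proofs'' it omits, deferring to Escard\'o's exposition), and your argument --- decide the bounded predicate $\forall k \le n.\ x_k = 0$ on the first $n+1$ bits, then in the positive case read off $\underline{n} < x$ from $x_{0},\dots,x_n$ all vanishing, and in the negative case extract the least $k \le n$ with $x_k = 1$ and use the ``at most one $1$'' constraint (plus decidability of equality in $2$) to get $x = \underline{k}$ --- is precisely the intended elementary reasoning, requiring no $\MP$. Your closing remark correctly identifies why this contrasts with \Cref{lem:MPmin}: here the position to inspect is bounded in advance by $n$, so no unbounded search arises.
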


We call $\Succinfty : \Ninfty \to \Ninfty $ the map $x \to 0x$ and note that
$\Succinfty(x) = x$ is equivalent to $x = \infty$. Let us also state how
$[\underline{0}, \Succinfty] : 1 + \Ninfty \to \Ninfty$ has an inverse which
equips $\Ninfty$ with a terminal coalgebra structure.

\begin{lemma}
  \label{lem:NinftyTerminalCoAlg}
For any $X$ and map $c : X \to 1 + X$, there is a unique $f : X \to \Ninfty$
such that
\[
\begin{array}{lcl!\quad c!\quad lcl}
  c(x) &=& \inl(0) &\Longleftrightarrow& f(x) &=& \underline{0} \\
  c(x) &=& \inr(x') &\Longleftrightarrow& f(x) &=& \Succinfty(f(x')) \\
\end{array}
\]
\end{lemma}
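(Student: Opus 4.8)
The plan is to construct $f$ explicitly by iterating the coalgebra structure and reading off a bit sequence, rather than by an unbounded search that might fail to terminate constructively. First I would extend $c$ to a map $\tilde c = [\inl, c] : 1 + X \to 1 + X$, so that $\tilde c(\inl(0)) = \inl(0)$ and $\tilde c(\inr(x)) = c(x)$; the point is that $\inl(0)$ becomes a fixed point, i.e. an absorbing ``stop'' state. Writing $\tilde c^n$ for the $n$-fold iterate, I would set $f(x) \in 2^\bN$ to be the sequence whose $n$-th bit is $1$ exactly when the orbit of $\inr(x)$ first reaches the stop state between steps $n$ and $n+1$, that is
\[
f(x)_n = 1 \quad\Longleftrightarrow\quad \tilde c^n(\inr(x)) \in \inr(X) \ \text{and}\ \tilde c^{n+1}(\inr(x)) = \inl(0).
\]
Each bit is decidable because $\tilde c^{n+1}(\inr(x))$ is obtained by finitely many applications of $\tilde c$, and by the stated decomposition property of disjoint unions every element of $1 + X$ is decidably in the image of $\inl$ or of $\inr$.

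Next I would check that $f$ actually lands in $\Ninfty$ and satisfies the two displayed equivalences. Since $\inl(0)$ is absorbing, once $\tilde c^m(\inr(x)) = \inl(0)$ all later iterates stay there, so the switch from $\inr$ to $\inl$ occurs at most once; hence $f(x)$ contains at most one $1$ and $f(x) \in \Ninfty$. For the first equivalence, $f(x)_0 = 1$ iff $\tilde c^1(\inr(x)) = c(x) = \inl(0)$, and because $f(x) \in \Ninfty$ the condition $f(x)_0 = 1$ is equivalent to $f(x) = \underline 0$; together this gives $c(x) = \inl(0) \Leftrightarrow f(x) = \underline 0$. For the second, when $c(x) = \inr(x')$ one has $f(x)_0 = 0$ and $\tilde c^{n+1}(\inr(x)) = \tilde c^n(\inr(x'))$, so $f(x)_{n+1} = f(x')_n$, which is precisely $f(x) = \Succinfty(f(x'))$. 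The two backward directions then follow because the cases $c(x) = \inl(0)$ and $c(x) = \inr(x')$ are exhaustive and mutually exclusive and their images under $f$ are distinguished by the $0$-th bit.

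Finally, for uniqueness I would suppose $g : X \to \Ninfty$ satisfies the same two properties and prove $f(x)_n = g(x)_n$ for all $x$ by induction on $n$, quantifying over all $x \in X$ at each stage. The base case uses that both $f(x)_0$ and $g(x)_0$ equal $1$ iff $c(x) = \inl(0)$; the inductive step splits on the decidable alternative for $c(x)$, using $f(x)_{n+1} = f(x')_n$ and $g(x)_{n+1} = g(x')_n$ in the $\inr(x')$ case and invoking the hypothesis at $x'$, and yielding $0$ on both sides in the $\inl(0)$ case. Function extensionality on $2^\bN$ then gives $f = g$. I expect the only genuine subtlety to be constructive discipline: the naive definition ``follow $c$ until it halts'' is unavailable since the orbit need not provably halt, so the argument must proceed bitwise through bounded iteration, and the uniqueness induction must be kept uniform in $x$ so that it can be applied to the successor $x'$.
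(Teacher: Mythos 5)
Your proof is correct, and it fills in a proof the paper deliberately omits: the lemma is stated without proof, with the reader deferred to the cited expositions (\cite{Esc13,type-topology}), where the corecursor for $\Ninfty$ is obtained by essentially your method. Your construction is the right constructive one: making the stop state absorbing via $\tilde c = [\inl, c]$ and defining the $n$-th bit of $f(x)$ from the bounded iterate $\tilde c^{n+1}(\inr(x))$ keeps every bit decidable (using that membership in the summands of $1+X$ is decidable, as the paper notes), the absorption argument correctly yields at most one $1$ so that $f(x) \in \Ninfty$, and your uniqueness argument by induction on the bit index, kept uniform in $x \in X$ so it can be instantiated at $x'$, is the standard coinduction-by-bits argument and is constructively sound.

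One caveat, which concerns the statement rather than your proof: read literally with $x'$ universally quantified, the converse of the second displayed equivalence, $f(x) = \Succinfty(f(x')) \Rightarrow c(x) = \inr(x')$, cannot hold for any $f$ that is not injective --- if $f(x') = f(x'')$ with $x' \neq x''$ and $c(x) = \inr(x'')$, then $f(x) = \Succinfty(f(x'))$ holds while $c(x) \neq \inr(x')$. Your closing remark, that the two cases for $c(x)$ are exhaustive and distinguished by the $0$-th bit of $f(x)$, establishes the only sensible reading: if $f(x)$ has successor form then $c(x) = \inr(x'')$ for some $x''$ with $f(x) = \Succinfty(f(x''))$. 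Together with the forward implications and the first equivalence, this is exactly what your uniqueness induction consumes (and what \Cref{lem:liftInflationary} later uses), so nothing is missing from your argument; you could simply flag explicitly that the second ``$\Longleftrightarrow$'' must be read in this weakened, existential form.
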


Thanks to that, one can define an addition and multiplications on $\Ninfty$ such
that $n \mapsto \underline{n}$ is a morphism of semiring from $\bN$
to $\Ninfty$ equipped with the obvious structure. We may also define a subtraction
operation $\Ninfty \times \bN \to \Ninfty$ by recursion on the second argument
such that $(x + \underline{n}) - n = x$.

We conclude this subsection with a lemma that allows to lift some endomaps
over $\bN$ to $\Ninfty$. This lifting was already defined
in~\cite[Lemma 5.4]{Esc13}, but we will additionally use that it preserves
strictly increasing maps.

\begin{lemma}
  \label{lem:liftInflationary}
  Given an inflationary map $f : \bN \to \bN$ (i.e. such that $f(n) \ge n$
  for all $n \in \bN$)
  there is a unique monotone map $\underline{f} : \Ninfty \to \Ninfty$
  such that $\underline{f}(\underline{n}) = \underline{f(n)}$ for all
  $n \in \bN$ and $\underline{f}(\infty) = \infty$.
  Furthermore, if $f$ is strictly increasing, $\underline{f}$ is an embedding
  (and thus a strictly increasing injection).
\end{lemma}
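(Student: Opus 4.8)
The plan is to define $\underline f$ by an explicit, decidable recipe on bit sequences and to verify everything (the two equations, monotonicity, uniqueness, and the embedding clause) through the density characterisation of the order in \Cref{lem:NinftyOrdYon}, so as never to case-split on whether the argument is finite or equal to $\infty$ --- which would be exactly $\LPO$. Concretely, for $x \in \Ninfty$ and $m \in \bN$ I would set $\underline f(x)_m = 1$ iff there is some $n \le m$ with $x_n = 1$ and $f(n) = m$. Because $f$ is inflationary, $f(n) = m$ forces $n \le f(n) = m$, so this is a bounded and hence decidable search; and since $x$ carries at most one $1$, so does $\underline f(x)$, whence $\underline f(x) \in \Ninfty$. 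Evaluating at $x = \underline k$ leaves $n = k$ as the only candidate and gives $\underline f(\underline k) = \underline{f(k)}$, while $x = \infty$ yields the zero sequence, so $\underline f(\infty) = \infty$.

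The heart of the argument is the equivalence
\[ \underline m \le \underline f(x) \iff \underline{g(m)} \le x, \qquad g(m) := \#\{\, n \in \bN : f(n) < m \,\}. \]
Unfolding $\underline m \le \underline f(x)$ as ``$\underline f(x)_j = 0$ for all $j < m$'' and substituting the definition of $\underline f$ rewrites the left-hand side as $\forall n.\,(f(n) < m \Rightarrow x_n = 0)$. By inflationarity the set $\{n : f(n) < m\}$ is contained in $\{0,\dots,m-1\}$ and so is finite, and monotonicity of $f$ makes it downward closed, hence equal to the initial segment $\{0,\dots,g(m)-1\}$; thus the condition becomes $\forall n < g(m).\,x_n = 0$, which is exactly $\underline{g(m)} \le x$. (It is precisely the monotonicity of $f$ that turns this into an initial segment; it is also what is needed for $\underline f$ to be order-preserving at all, so I would read the hypothesis as ``monotone inflationary''.)

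From here the remaining clauses fall out. \emph{Monotonicity} is immediate by transitivity: if $x \le y$ then $\underline m \le \underline f(x) \Rightarrow \underline{g(m)} \le x \le y \Rightarrow \underline m \le \underline f(y)$, so $\underline f(x) \le \underline f(y)$ by \Cref{lem:NinftyOrdYon}. For \emph{uniqueness} I would use that $\underline m \le z \iff z_0 = \dots = z_{m-1} = 0$, so the set $\{m : \underline m \le z\}$ determines every bit of $z$; it therefore suffices to show that any monotone $h$ with $h(\underline n) = \underline{f(n)}$ and $h(\infty) = \infty$ satisfies $\underline m \le h(x) \iff \underline m \le \underline f(x)$. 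The direction from right to left uses monotonicity of $h$ together with $f(g(m)) \ge m$ (which holds since $g(m) \notin \{n : f(n) < m\}$): indeed $\underline{g(m)} \le x$ gives $h(x) \ge h(\underline{g(m)}) = \underline{f(g(m))} \ge \underline m$. For the other direction I would compare $x$ with $\underline{g(m)}$ using the $\MP$-free trichotomy of \Cref{lem:minwoMP}: if $x \ge \underline{g(m)}$ we are done by the key equivalence, while the remaining case $x = \underline k$ with $k < g(m)$ forces $f(k) < m$ and hence $h(x) = \underline{f(k)} < \underline m$, contradicting the hypothesis $\underline m \le h(x)$. Finally, when $f$ is strictly increasing one has $g(f(n)) = n$, so instantiating the key equivalence at $m = f(n)$ gives $\underline n \le x \iff \underline{f(n)} \le \underline f(x)$; this shows $\underline f$ reflects the order, so with monotonicity it is an order-embedding, hence a strictly increasing injection.

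The main obstacle is not any single computation but maintaining constructive discipline throughout: at no point may one decide whether $x$ is some $\underline n$ or $\infty$. This is absorbed at the outset by making the bit-level definition decidable via inflationarity, and thereafter by routing \emph{all} order reasoning through \Cref{lem:NinftyOrdYon} and the trichotomy of \Cref{lem:minwoMP} (which costs no $\MP$), rather than through any omniscience-flavoured case analysis. I expect the uniqueness step --- specifically the second direction of the bit-by-bit comparison --- to be where this discipline is most delicate, which is exactly where \Cref{lem:minwoMP} does the work.
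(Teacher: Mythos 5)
Your proof is correct, but it takes a genuinely different route from the paper's. The paper constructs $\underline{f}$ corecursively: writing $f(n) = n + f'(n)$, it builds a coalgebra $c$ on $\Baire \times \Ninfty$ and invokes finality of $\Ninfty$ (\Cref{lem:NinftyTerminalCoAlg}) to obtain $\underline{f} = H(f')$, then extracts the needed facts --- $H(f')(x) = \underline{n} + H(f' \circ \Succ^n)(x - n)$ for $x \ge \underline{n}$, and $H(f')(x) \le \underline{n} \Rightarrow x \le \underline{n}$ --- by induction, before proving monotonicity and the embedding clause via \Cref{lem:NinftyOrdYon} and \Cref{lem:minwoMP}, exactly the two lemmas you route everything through. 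You instead give a direct bitwise definition, with decidability secured by inflationarity (since $f(n) = m$ forces $n \le m$), and isolate the adjoint-style equivalence $\underline{m} \le \underline{f}(x) \iff \underline{g(m)} \le x$ with $g(m) = \#\{n : f(n) < m\}$, from which monotonicity, uniqueness and the embedding clause all fall out of one computation. What your route buys: the uniqueness claim, which the paper's written proof never argues (existence is deferred to Escard\'o's Lemma 5.4 and uniqueness is left implicit), gets an explicit constructive treatment --- your observation that $z \in \Ninfty$ is determined by the decidable set $\{m : \underline{m} \le z\}$, combined with the $\MP$-free trichotomy of \Cref{lem:minwoMP}, closes it without any omniscience. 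What the paper's route buys: corecursion via the terminal coalgebra is the construction that generalizes to liftings not given by a simple bit-relocation recipe, and it connects to the cited literature. Finally, your parenthetical reading of the hypothesis as ``monotone inflationary'' is a genuine catch rather than a deviation: inflationarity alone cannot suffice (for $f(0) = 5$, $f(1) = 1$ no monotone map can satisfy $\underline{f}(\underline{n}) = \underline{f(n)}$), and the paper's own monotonicity argument silently uses $n_x \le n_y \Rightarrow f(n_x) \le f(n_y)$ in its final chain of inequalities; the gap is harmless for the paper since the lemma is only ever applied to strictly increasing maps, which is also the only case where the ``furthermore'' clause is invoked.
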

\begin{proof}
The existence of $\underline{f}$ with the correct properties follows
from~\cite[Lemma 5.4]{Esc13}, except that monotonicity is not proven. Let
us thus construct explicitly $\underline{f}$.

First, note that strictly monotone maps $f : \bN \to \bN$ are in one-to-one
correspondence with maps $f' : \bN \to \bN$ by taking $f(n) = n + f'(n)$.
We will thus now work explicitly with $f'$ from now on.
We define a map
\[
\begin{array}{llcl}
  c : & \Baire \times \Ninfty &\longto&  1 + \Baire \times \Ninfty \\
      & (f', \underline{0}) &\longmapsto& \left\{ \begin{array}{ll}
                                                  \inl(0) & \text{if $f'(0) = 0$} \\
                                                  \inr(f' - 1, \underline{0}) & \text{otherwise ($-1$ is the pointwise operation)}
                                                \end{array} \right. \\
      & (f', \Succinfty(x)) &\longmapsto& \inr(f' \circ \Succ, x)
\end{array}
\]
By~\Cref{lem:NinftyTerminalCoAlg} and currying, we have a map
$H : \Baire \to \Ninfty^{\Ninfty}$ such that $H(f')(\underline{0}) = f'(\underline{0})$
(this can be proven by induction on $f'(0)$) and
$H(f')(\Succinfty(x)) = \Succinfty(H(f' \circ \Succ)(x))$. We take $\underline{f} = H(f')$.

For every $x \in \Ninfty$ such that $x \ge \underline{n}$, by induction over $n$, we get
\[ H(f')(x) = \underline{n} + H(f' \circ \Succ^n)(x - n)\]
For $x = \underline{n}$, this yields $\underline{f}(\underline{n}) = H(f')(\underline{n}) = \underline{n + f'(n)} = \underline{f(n)}$,
and for $x = \infty$, this gives $\underline{f}(\infty) = H(f')(\infty) \ge \underline{n}$
for every $n \in \bN$ and thus $\underline{f}(\infty) = \infty$.

By induction over $n$, one can show that $H(f')(x) \le \underline{n}$ implies
$x \le \underline{n}$ for any $f'$: if $x = \underline{0}$ this is trivial,
$n = 0$ entails $x = \underline{0}$ and otherwise, we can use the induction
hypothesis with $f' \circ \Succ$ in lieu of $f'$.

As a consequence, one can deduce monotonicity: to establish that, by~\Cref{lem:NinftyOrdYon}
it suffices to prove $\underline{f}(y) \ge \underline{n}$ from $x \le y$ and $\underline{f}(x) \ge \underline{n}$
for arbitrary $x,y \in \Ninfty$ and $n \in \bN$. By~\Cref{lem:minwoMP}, assume
without loss of generality that $\underline{f}(y) < \underline{n}$.
But then we have $y \le \underline{n}$ and a fortiori $x \le \underline{n}$.
Thus there are some $n_x \le n_y \in \bN$ such that
$x = \underline{n_x}$, $y = \underline{n_y}$, and we can derive the following absurd chain of inequalities:
\[ \underline{n} > \underline{f}(y) = \underline{f(n_y)} \ge \underline{f(n_x)} = \underline{f}(x) \ge \underline{n} \]

Now assume that $f$ is strictly increasing and let us prove $\underline{f}$ is
an embedding. To this end, conversely assume that $\underline{f}(x) \le \underline{f}(y)$,
$x \ge \underline{n}$ and $y < \underline{n}$ towards a contradiction (employing~\Cref{lem:minwoMP} again).
We have again $n_y \in \bN$ such that $y = \underline{n_y}$ and
$\underline{f}(y) = \underline{f(n_y)} \ge \underline{f}(x)$. Hence, we have
$x \le \underline{f(n_y)} \in \underline{\bN}$, so there is some $n_x \in \bN$
with $x = \underline{n_x}$ and $\underline{f}(x) = \underline{f(n_x)}$.
Because $n \mapsto \underline{n}$ is an embedding, we have
$f(n_x) \le f(n_y)$. Since $f$ is strictly increasing, we have $n_x \le n_y$.
But then we have the absurd chain of inequalities $n > y \ge x \ge n$.
\end{proof}

\subsection{The Myhill isomorphism theorem}

We refer to the introduction for the key definitions; here we give a couple of
intuitions on how the usual Myhill isomorphism theorem is proven and a couple
of useful facts.

A first helpful intuition is that the sets $A$ and $B$ do not matter as data
when computing the bijection in the conclusion of the theorems. The relevant
property is enforced by making sure that the bijection only relates pairs which
belong to the transitive closure of the following bipartite graph, where
$f$ and $g$ are injections witnessing $A \preceq_1^X B$ and $B \preceq_1^X A$
respectively.
\[ G_{f,g} = \{ (\inl(n), \inl(f(n))) \mid n \in \mathbb{N} \} \cup \{ (\inl(g(m)), \inr(m)) \mid m \in \bN\} ~ \subseteq ~ (\bN + \bN)^2\]
$f$ and $g$ being reductions mean essentially that whenever we have an edge
$(u,v)$ in $G_{f,g}$, then $u$
belongs to the subgraph induced by $A$ and $B$ (i.e., $\inl(A) \cup \inl(B)$)
if and only if $v$ does.

\begin{definition}
We say that $X$ has the \emph{strong Myhill property} if for every pair of injections
$f, g : X \to X$, there is a bijection $h : X \to X$ such that
$h \subseteq \bigcup_{m \in \bZ} f \circ (g \circ f)^{m}$.
\end{definition}

It is clear that the strong Myhill property implies the Myhill property for any
set $X$. We leave the converse as an open question.

So now, let us explain in more details how we can prove the Myhill isomorphism
theorem.

\begin{theorem}[{essentially~\cite[Theorem 18]{Myhill55}}]
  \label{thm:myhilliso55}
$\mathbb{N}$ has the strong Myhill property.
\end{theorem}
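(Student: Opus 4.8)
The plan is to reduce the statement to the existence of a perfect matching in the orbit graph of $f$ and $g$ and then to produce that matching by an effective back-and-forth. The relation $R := \bigcup_{m \in \bZ} f \circ (g \circ f)^m$ pairs a point with the points of its orbit under the (partial) injections $f$ and $g$; in particular $f \subseteq R$ and $g^{-1} \subseteq R$, so it suffices to build a bijection contained in $f \cup g^{-1}$. Concretely I would work with the bipartite graph underlying $G_{f,g}$ on two tagged copies of $\bN$, with an $f$-edge from $x$ to $f(x)$ and a $g$-edge from $g(y)$ to $y$. Since $f$ and $g$ are injective, every vertex has in- and out-degree at most one, so each connected component is a finite cycle or a one- or two-sided infinite path, and a bijection $h \subseteq R$ is exactly a perfect matching pairing each domain vertex with a codomain vertex of its own component along graph edges.

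Classically the matching is read off componentwise: on cycles and bi-infinite paths one matches each $x$ with $f(x)$, on a path with a source on the domain side one again uses $f$, and on a path with a source on the codomain side one uses $g^{-1}$. The difficulty — and the whole point of a \emph{constructive} proof — is that one cannot decide the type of the component containing a given $x$: determining whether the backward orbit $x, (g\circ f)^{-1}(x), \dots$ terminates, and if so on which side, amounts to deciding membership in $\operatorname{im} f$ and $\operatorname{im} g$ along an a priori unbounded search, which is unavailable without $\LPO$. So the componentwise recipe cannot be turned directly into a definition of $h$.

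Instead I would obtain $h$ as the limit of a sequence of finite partial injections $h_s \subseteq f \cup g^{-1}$ produced by a back-and-forth that guarantees $n \in \dom h_s$ at stage $2n$ and $n \in \operatorname{ran} h_s$ at stage $2n+1$. To place the next domain point $n$ into the domain I trace the forward chain $n, f(n), (g\circ f)(n), f((g\circ f)(n)), \dots$ until I meet the first codomain point not yet used by $h_s$, and flip this augmenting path; dually, to place $m$ into the range I trace $m, g(m), f(g(m)), \dots$. The point that makes each stage effective is that only the total computable maps $f$, $g$ and $g \circ f$ are applied, always in the forward direction, and the only tests are membership in the current finite matching; since that matching is finite the search is bounded, and a short count (using that a cycle has equally many vertices on each side, and an infinite path infinitely many codomain vertices) shows an unused codomain point is always reached, so the path really is augmenting.

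The step I expect to be the main obstacle is proving that this process converges to a total bijection, i.e. that the value assigned to each point stabilises after finitely many stages despite the reassignments forced by flipping augmenting paths. This is the delicate, genuinely ``back-and-forth'' part: I would track, along each component, which vertices are matched by their forward $f$-edge and which by their backward $g$-edge, observe that a domain-covering flip only ever converts $g$-matched domain vertices into $f$-matched ones (and a range-covering flip does the reverse on the codomain side), and use the interleaving of the two kinds of requirement to argue that any fixed vertex is disturbed only finitely often, so that $h := \lim_s h_s$ is well defined, total, surjective, injective and contained in $R$. Carrying out this stabilisation argument constructively — without secretly reintroducing a case analysis on the component type — is the crux, and is exactly what separates Myhill's theorem from the classical Cantor–Bernstein construction.
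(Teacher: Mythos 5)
Your setup (the bipartite orbit graph $G_{f,g}$, degree-$\le 2$ components, the observation that deciding a component's type needs $\LPO$) matches the paper's framing, but the execution has a fatal flaw at exactly the point you flag as the crux: you insist on the invariant $h_s \subseteq f \cup g^{-1}$, and this is strictly stronger than what the theorem asks and is in general \emph{unachievable}. A perfect matching of a path or even cycle contained in $f \cup g^{-1}$ is rigid: on a one-sided infinite component it is unique (all-$f$ if the source is a domain vertex, all-$g^{-1}$ if it is a codomain vertex), and on a two-sided component there are exactly two, each constant along the whole component. So any bijection $h \subseteq f \cup g^{-1}$ would, evaluated at a single $x$, decide which side the source of $x$'s component lies on --- precisely the undecidable information you yourself identified in your second paragraph. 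One can rig computable injections $f, g$ so that this data computes the halting problem; hence no constructive (computable) $h \subseteq f \cup g^{-1}$ exists in general. This is exactly why the strong Myhill property is stated with the looser relation $\bigcup_{m \in \bZ} f \circ (g \circ f)^m$: the whole point of Myhill's construction is that $h$ may pair $n$ with a far-away orbit element $f\bigl((g \circ f)^k(n)\bigr)$, not merely with $f(n)$ or $g^{-1}(n)$. Your augmenting-path process cannot converge to an object that does not exist.

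The non-convergence is also concrete, not just in principle: your stabilisation claim (``any fixed vertex is disturbed only finitely often'') is false under adversarial numbering. Take a two-sided component $\dots, d_{-1}, c_{-1}, d_0, c_0, d_1, c_1, \dots$ with $c_i = f(d_i)$ and $d_{i+1} = g(c_i)$, and choose the labels so that the back-and-forth treats, in order, $d_0, c_{-1}, d_{-1}, c_{-2}, d_{-2}, c_{-3}, \dots$ (e.g.\ give $d_{-k}$ and $c_{-k-1}$ the domain/codomain labels $2k$, and bury the forward vertices $d_k, c_k$ at odd labels, where they are already matched when reached). Then each new backward requirement is unmatched at its stage and its augmenting path runs forward through the entire matched block, flipping every vertex in it: the partner of $d_0$ alternates $c_0, c_{-1}, c_0, c_{-1}, \dots$ forever, and $\lim_s h_s(d_0)$ does not exist. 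The paper's proof avoids this by being \emph{monotone}: it builds an increasing sequence of finite partial isomorphisms $I_n \subseteq \bigcup_{m \in \bZ} f \circ (g \circ f)^m$ and never reassigns a pair. To put $n$ into the domain, one tests, for $k = 0, 1, \dots$, whether $f\bigl((g \circ f)^k(n)\bigr)$ lies in the finite codomain of $I_n$ --- a decidable test --- and pairs $n$ \emph{directly} with the first value that does not; since $I_n$ is finite, the pigeonhole principle bounds this search, used values are skipped over rather than flipped, and $h = \bigcup_n I_n$ is trivially well defined. Replacing your flips with this skip-ahead pairing (and dropping the $f \cup g^{-1}$ invariant) is not a repair of your argument but a different construction --- the one in the paper.
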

\begin{proof}[Proof idea]
Call a partial isomorphism over $\bN$ a finite subset\footnote{I.e., isomorphic to
an ordinal. More concretely, this can be represented by a list of pair of numbers.}
$I$ of $\bN^2$ such that

\[
(n, m) \in I ~\text{and}~ (n', m') \in I \qquad \text{if and only if} \qquad n = n' \Longleftrightarrow m = m'\]
for all $n, n', m, m' \in \bN$.
We can build an increasing sequence $I_n$
of such partial isomorphisms such that, for every $n \in \bN$, we have

\begin{enumerate}
  \item \label{enumitem:partialIsoInGraph} $I_n \subseteq \bigcup\limits_{m \in \bZ} f \circ (g \circ f)^{m}$
\item for every $k < n$, $k$ belongs to both the domain and codomain of $I_n$
\end{enumerate}

\begin{figure}[h]
\[\begin{tikzcd}[cramped,column sep=small]
	& \vdots && {} & \vdots &&&&& \vdots & {} & {} & \vdots \\
	& 4 &&& 4 &&&&& 4 &&& 4 \\
	& 3 &&& 3 &&&&& 3 &&& 3 \\
	& 2 &&& 2 &&&&& 2 &&& 2 \\
	& 1 &&& 1 &&&&& 1 &&& 1 \\
	& 0 &&& 0 &&&&& 0 &&& 0 \\
	\textcolor{rgb,255:red,184;green,6;blue,0}{f} & {\mathbb{N}} &&& {\mathbb{N}} &
    \textcolor{rgb,255:red,51;green,65;blue,255}{g}
    &&& h & {\mathbb{N}} &&& {\mathbb{N}}
	\arrow[color={rgb,255:red,51;green,65;blue,255}, from=2-5, to=3-2]
	\arrow["{\qquad\footnotesize43f(3)g(f(3))f(g(f(3)))}"{description, pos=0.3}, dashed, no head, from=2-13, to=1-11]
	\arrow[color={rgb,255:red,184;green,6;blue,0}, dashed, no head, from=3-2, to=1-4]
	\arrow[color={rgb,255:red,51;green,65;blue,255}, from=3-5, to=4-2]
	\arrow["{3f(3)}"{description, pos=0.2}, dashed, no head, from=3-10, to=1-12]
	\arrow["3224"{description, pos=0.2}, from=3-13, to=2-10]
	\arrow[color={rgb,255:red,184;green,6;blue,0}, from=4-2, to=4-5]
	\arrow[color={rgb,255:red,51;green,65;blue,255}, from=4-5, to=2-2]
	\arrow["22"{description, pos=0.3}, from=4-10, to=4-13]
	\arrow[color={rgb,255:red,184;green,6;blue,0}, from=5-2, to=6-5]
	\arrow[color={rgb,255:red,51;green,65;blue,255}, from=5-5, to=5-2]
	\arrow[color={rgb,255:red,184;green,6;blue,0}, from=6-2, to=5-5]
	\arrow[color={rgb,255:red,51;green,65;blue,255}, from=6-5, to=6-2]
	\arrow["01"{description, pos=0.2}, from=6-10, to=5-13]
	\arrow["0011"{description, pos=0.2}, from=6-13, to=5-10]
	\arrow["{:}"{description}, color={rgb,255:red,184;green,6;blue,0}, draw=none, from=7-1, to=7-2]
	\arrow[shift right, color={rgb,255:red,184;green,6;blue,0}, shorten <=8pt, shorten >=8pt, from=7-2, to=7-5]
	\arrow[shift right, color={rgb,255:red,51;green,65;blue,255}, shorten <=8pt, shorten >=8pt, from=7-5, to=7-2]
	\arrow["{:}"{description}, color={rgb,255:red,51;green,65;blue,255}, draw=none, from=7-6, to=7-5]
	\arrow["{:}"{description}, draw=none, from=7-9, to=7-10]
	\arrow[shorten <=8pt, shorten >=8pt, from=7-10, to=7-13]
\end{tikzcd}\]
\caption{Construction of the bijection from the proof that $\bN$ has the (strong)
  Myhill property. On the left is pictured a bipartite graph corresponding
  to injections $f, g : \mathbb{N} \to \bN$ (arrows corresponding to $f$
  go left-to-right and right-to-left respectively), and on the right is the
  graph of the isomorphism $h = \bigcup_{n \in \mathbb{N}} I_n$. The edge are
  directed to indicate whether the edge was added during a forward or backward
step, and labelled by the path they correspond to in the graph on the left.}
\label{fig:myhilliso55}
\end{figure}
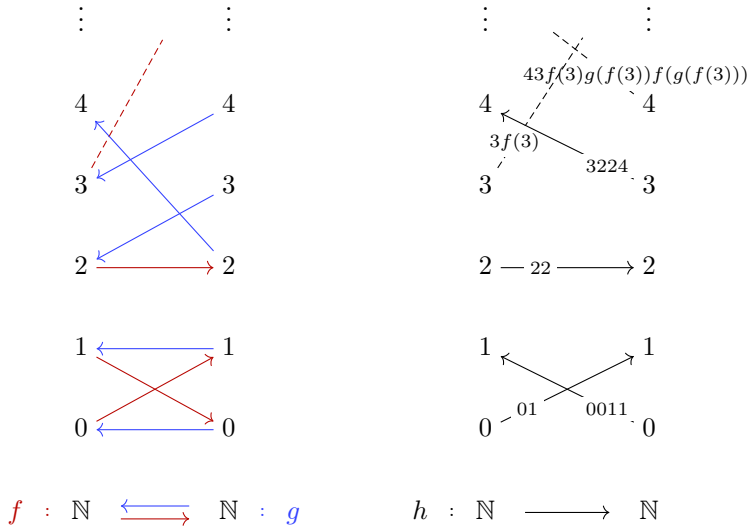

We can of course set $I_0 = \emptyset$. To go from $I_n$ to $I_{n+1}$, we
potentially need to add $n$ to the domain and/or the codomain if it is not
there already (we can check if it is in either). If it is not in the domain,
we can decide for every $k \le n$ whether $f \circ (g \circ f)^k$ is in the
codomain of $I_n$ or not.
Since $I_n$ has size $n$, we know there is a minimal such $k$ which isn't.
Set $I'_n := I_n \cup \{(n, k)\}$ and note it is still a partial isomorphism.
We can similarly add $n$ to the codomain of $I_n$ or $I'_n$ if required and
complete the definition of $I_{n+1}$. We picture the construction in~\Cref{fig:myhilliso55}.

Then it can be shown that $h = \bigcup_{n \in \mathbb{N}} I_n$ is
the desired isomorphism.
\end{proof}

We will reuse a variation of this construction in~\Cref{sec:positive}.
For some of our negative results, we will also need to know that subfinite
sets have the Myhill property.

\begin{lemma}
  \label{lem:subfinite}
For any finite set $n \in \bN$ and $A \subseteq n$, $A$ has the strong Myhill
property.
\end{lemma}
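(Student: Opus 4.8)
The plan is to exploit that any self-injection of a subfinite set is automatically a bijection, which collapses the zig-zag construction to a single step. Unwinding the strong Myhill property for $X = A$, we are handed two injections $f, g : A \to A$ and must produce a bijection $h : A \to A$ with $h \subseteq \bigcup_{m \in \bZ} f \circ (g \circ f)^m$. The $m = 0$ term of this union, $f \circ (g \circ f)^0$, is just $f$; hence if $f$ is already a bijection we may simply take $h := f$, the containment holding for free and $g$ playing no role whatsoever. This is in sharp contrast with \Cref{thm:myhilliso55} for $\bN$, where $f$ (e.g.\ $\Succ$) need not be surjective and the genuine back-and-forth argument is unavoidable.

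Thus the entire content is to show that every injection $f : A \to A$ is surjective, and here I would use that $A \subseteq n$ and that equality on the finite ordinal $n$ is decidable. Fix $a \in A$ and form its forward orbit $a, f(a), \dots, f^n(a)$, a list of $n + 1$ elements of $A \subseteq n$. By the pigeonhole principle for finite sets with decidable equality --- constructively valid, being a finite search --- there are indices $0 \le i < j \le n$ with $f^i(a) = f^j(a)$. Applying injectivity of $f^i$ gives $a = f^p(a)$ with $p := j - i \ge 1$, whence $a = f\big(f^{p-1}(a)\big)$ with $f^{p-1}(a) \in A$, so $a$ lies in the image of $f$. As $a$ was arbitrary, $f$ is onto; combined with injectivity this makes $f$ a bijection whose inverse sends $a$ to $f^{p-1}(a)$ for the period $p$ read off from the orbit.

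The subtlety to watch --- the point where a careless argument could secretly invoke omniscience --- is that $A$ need not have decidable membership, so we must never branch on whether a given element of $n$ belongs to $A$. The argument sidesteps this entirely: every element of the orbit is either the chosen starting point $a \in A$ or a value of $f$, hence visibly a member of $A$, and the only decisions taken are equalities between elements of the ambient finite set $n$, which are decidable. The proof therefore goes through in plain intuitionistic set theory with no choice or omniscience principle, and we conclude that $A$ has the strong Myhill property by taking $h = f$.
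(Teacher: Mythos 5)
Your proof is correct and is essentially the paper's: the paper likewise takes $h = f$, the $m = 0$ term of the union (so $g$ plays no role in the containment), and inverts $f$ via the same finite-orbit/pigeonhole observation, packaged there as the uniform formula $h^{-1} = g \circ (f \circ g)^{(n-1)!}$ rather than your pointwise period map $a \mapsto f^{p-1}(a)$. Incidentally, your pointwise version quietly repairs a slip in the paper's exponent: one computes $h^{-1} \circ h = (g \circ f)^{(n-1)!+1}$, which is not the identity when, say, $n = 3$, $g = \id$ and $f$ is a transposition; a correct uniform choice is any exponent $N$ with $\operatorname{lcm}(1,\dots,n) \mid N+1$, e.g.\ $N = n! - 1$, whereas your per-element period argument needs no such bookkeeping.
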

\begin{proof}
This is trivial for $n = 0$. Otherwise, assuming we have injections
$f, g : A \to A$, the desired bijection is $f$ with inverse
$g \circ (f \circ g)^{(n-1)!}$ ($f \circ g$ is a partial bijection over $n$ and
thus orbits of $x \in A$ have size less than $n$).
\end{proof}

\section{Negative results about the (strong) Myhill property}
\label{sec:negative}

\subsection{On the conatural numbers}

Our first negative result is that it is impossible to prove constructively
that $\Ninfty$ has the strong Myhill property. To do so, we will use that it is
incompatible with continuity and a slight extension of countable choice.

\begin{definition}
For any set $X$, we say that $X$-choice holds if and only if every surjection
$r : Y \to X$ has a section $s : Y \to X$ (i.e. $r \circ s$ is the identity).
\end{definition}

For any fixed object $X$ in a topos, $X$-choice holds precisely
when $X$ is an internally projective object. In relative realizability,
the internally projective objects are those isomorphic to partitioned assemblies.
In the case of Kleene-Vesley realizability, this includes include all (classical)
subsets of $\Cantor$. In particular, $\Ninfty$-choice holds in Kleene-Vesley
realizability.

With $X$-choice, we then know that the Myhill property gives us
bijections $h : X \cong X$ generated by maps $\iota : X \to \bZ$ which essentially
tell us how far in the graph and in which direction to travel to build $h$.

\begin{lemma}
  \label{lem:choiceToZ}
Assume $X$-choice holds. If $X$ has the strong Myhill property, then for every
injections $f, g : X \to X$, there is $\iota : X \to \bZ$ such that the
following map is well-defined and a bijection:
\[
\begin{array}{llcl}
h :& X &\longto& X \\
& x &\longmapsto& \left(f \circ (g \circ f)^{\iota(x)}\right)(x)
\end{array}
\]
\end{lemma}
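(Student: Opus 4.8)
The plan is to invoke the strong Myhill property once to obtain a single bijection $h_0 : X \to X$ with $h_0 \subseteq \bigcup_{m\in\bZ} f\circ(g\circ f)^m$, and then to use $X$-choice (projectivity of $X$) to convert the pointwise existence of a suitable integer into an honest function $\iota : X \to \bZ$. For each $m\in\bZ$ the expression $f\circ(g\circ f)^m$ denotes a partial function $X\rightharpoonup X$ — total for $m\ge 0$, and built from the partial inverses $f^{-1},g^{-1}$ (which exist since $f,g$ are injective) when $m<0$ — so that its graph $G_m \subseteq X\times X$ is well-defined; together these assemble into a family $m\mapsto G_m$, i.e. a function $\bZ \to \powerset(X\times X)$. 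I then form
\[ Y = \{ (x,m) \in X\times\bZ \mid (x,\, h_0(x)) \in G_m \}, \]
a legitimate set by bounded comprehension (membership in the given family $G_m$ is bounded), equipped with the first projection $r = \pi_0 : Y \to X$.

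The key verification is that $r$ is surjective. For every $x$ we have $(x, h_0(x)) \in h_0 \subseteq \bigcup_{m\in\bZ} G_m$, which says exactly $\exists m\in\bZ.\ (x, h_0(x)) \in G_m$, i.e. $\exists m.\ (x,m)\in Y$; hence every $x\in X$ is in the image of $r$. Applying $X$-choice to $r$ gives a section $s : X \to Y$ with $r\circ s = \id_X$, and I set $\iota = \pi_1 \circ s : X\to\bZ$. By construction $s(x) = (x,\iota(x)) \in Y$, so $(x, h_0(x)) \in G_{\iota(x)}$; since each $G_m$ is the graph of a partial function, this means $x\in\dom(f\circ(g\circ f)^{\iota(x)})$ and $(f\circ(g\circ f)^{\iota(x)})(x) = h_0(x)$.

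This final equality says precisely that the displayed map $h$ is everywhere defined and agrees with $h_0$ pointwise; hence $h = h_0$ is well-defined and a bijection, as claimed. The only real content is recognizing that the pointwise witnesses $m$ assemble into a surjection onto $X$, so that projectivity of $X$ applies; I expect the one point to keep in mind is that $\iota(x)$ need not be unique (an $x$ may be reached along several integers $m$), but this is harmless, since any section of $r$ produces an $\iota$ that recovers the fixed bijection $h_0$ through the defining equality above.
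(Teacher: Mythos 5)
Your proof is correct and is essentially the paper's argument: the paper obtains $h_0$ from the strong Myhill property and then ``rewrites $\forall\exists$ to $\exists\forall$ using $X$-choice,'' which is exactly what you do by forming the total relation $Y \subseteq X \times \bZ$, observing that the projection to $X$ is surjective, and taking a section to extract $\iota$. Your spelled-out verification that each $G_m$ is the graph of a partial function (so that $(x,h_0(x)) \in G_{\iota(x)}$ yields both well-definedness and $h = h_0$) is a faithful, more detailed rendering of the same step.
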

\begin{proof}
The bijection $h$ one gets from the strong Myhill property satisfies
\[\forall x \in X. \exists m \in \bZ. \; h(x) = \left(f \circ (g \circ f)^{m}\right)(x)\]
Rewriting this $\forall\exists$ to $\exists\forall$ using $X$-choice yields the
function $\iota$ with the expected property.
\end{proof}

To exploit this, we will find two injections that will force $\iota$
to infinitely many times go forward and backwards in the graph, which is
impossible to do continuously if the domain of $\iota$ is $\Ninfty$
(see~\Cref{fig:NinftyMPLPO} for an intuition of how we force this).

\begin{lemma}
  \label{lem:oscillation}
$\LPO$ holds if and only if $\MP$ holds and there exists a map $f : \Ninfty \to 2$ such that
$f(\underline{n})$ is different from $f(\infty)$ infinitely many times (i.e. $\forall n \in \bN. \exists
k \in \bN. f(\underline{n + k}) \neq f(\infty)$).
\end{lemma}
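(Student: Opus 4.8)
The plan is to prove both implications, with the forward direction being routine and the backward direction carrying the content.

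For the forward direction, I would first note that $\LPO$ immediately implies $\MP$: given $p \in 2^\bN$ with $p \neq 0^\omega$, $\LPO$ provides $p = 0^\omega \vee \exists n.\, p_n = 1$, and the first disjunct is excluded by hypothesis. To build the oscillating $f$, I would use the reformulation of $\LPO$ as $\forall p \in \Ninfty.\, p = \infty \vee p \in \underline{\bN}$ recalled after \Cref{def:lpo} to define $f : \Ninfty \to 2$ by cases: set $f(\infty) = 0$, and for $p = \underline{j}$ set $f(p)$ to the parity $j \bmod 2$ (the index $j$ being found by an unbounded search that terminates since $p \in \underline{\bN}$). Oscillation is then immediate, since for every $n$ one can pick $k$ making $n+k$ odd, so that $f(\underline{n+k}) = 1 \neq 0 = f(\infty)$.

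For the backward direction, assume $\MP$ and fix an oscillating $f$. I would reduce $\LPO$ to deciding, for an arbitrary $p \in \Ninfty$, whether $p = \infty$ or $p \in \underline{\bN}$, which is exactly the reformulation above. The key device is to manufacture from $f$ a function that ``transports'' any single $1$ found in $p$ out to an index where $f$ disagrees with $f(\infty)$. Concretely, using oscillation together with a terminating search, I would define $\nu : \bN \to \bN$ by letting $\nu(j)$ be the least index $\ge j$ with $f(\underline{\nu(j)}) \neq f(\infty)$; this $\nu$ is inflationary, so \Cref{lem:liftInflationary} yields a monotone lift $w := \underline{\nu} : \Ninfty \to \Ninfty$ satisfying $w(\underline{j}) = \underline{\nu(j)}$ and $w(\infty) = \infty$.

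The decision then proceeds by computing the bit $f(w(p))$ and comparing it to $f(\infty)$, which is legitimate since $2$ is discrete. If $f(w(p)) \neq f(\infty)$, then $p \neq \infty$, for otherwise $w(p) = w(\infty) = \infty$ would force $f(w(p)) = f(\infty)$; and $\MP$ upgrades $p \neq \infty$ to $p \in \underline{\bN}$. If instead $f(w(p)) = f(\infty)$, I would show $p = \infty$ coordinatewise: for each $m$, were $p_m = 1$ we would have $p = \underline{m}$, hence $w(p) = \underline{\nu(m)}$ and $f(w(p)) = f(\underline{\nu(m)}) \neq f(\infty)$, a contradiction, so $p_m = 0$. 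Either way $p = \infty \vee p \in \underline{\bN}$ is decided, giving $\LPO$. The main obstacle, and the step that genuinely uses the hypotheses, is the construction of $w$: oscillation is exactly what makes $\nu$ total (every search for a disagreement beyond $j$ succeeds), \Cref{lem:liftInflationary} is what turns $\nu$ into a bona fide total map fixing $\infty$, and $\MP$ is needed only in the first case to convert the bare inequality $p \neq \infty$ into membership in $\underline{\bN}$. I expect no difficulty beyond checking that the search defining $\nu$ terminates and that the coordinatewise argument in the second case appeals only to decidability of $p_m = 1$ rather than to excluded middle.
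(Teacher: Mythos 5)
Your proof is correct, and it reaches the conclusion by a genuinely different device than the paper in the interesting direction. The paper's backward argument fixes $x \in \Ninfty$ and decides the decidable predicate $\exists y \in \Ninfty.\, f(x+y) \neq f(\infty)$ directly, via the selection functional of \Cref{thm:Ninftysel} together with the addition on $\Ninfty$; the two branches are then handled exactly as in yours (a positive answer gives $x \neq \infty$, which $\MP$ upgrades to $x \in \underline{\bN}$; a negative answer forces $x \ge \underline{n}$ for all $n$, hence $x = \infty$). You avoid \Cref{thm:Ninftysel} altogether: you extract from the oscillation hypothesis the minimization $\nu(j) = \min\{m \ge j \mid f(\underline{m}) \neq f(\infty)\}$ — constructively legitimate since the predicate is decidable and the hypothesis supplies a witness above every $j$ — observe that $\nu$ is inflationary (indeed monotone, so \Cref{lem:liftInflationary} applies even on its strict reading), lift it to $w : \Ninfty \to \Ninfty$ with $w(\infty) = \infty$, and reduce the whole decision to the single bit comparison of $f(w(p))$ against $f(\infty)$. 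In effect, $w$ hand-implements for this one predicate the generic search that \Cref{thm:Ninftysel} provides; what your route buys is self-containedness within the paper's own toolkit (the lifting lemma is proved there in full, whereas \Cref{thm:Ninftysel} is imported from \cite{Esc13}), at the cost of constructing and verifying the auxiliary maps $\nu$ and $w$, and it also isolates cleanly that $\MP$ is used only to convert $p \neq \infty$ into $p \in \underline{\bN}$. The forward direction matches the paper's (parity through the enumeration of $\underline{\bN}$ that $\LPO$ yields), with the small extra care, which the paper leaves implicit, of recording $\LPO \Rightarrow \MP$. One cosmetic point: your defining clause for $\nu$ is phrased circularly (``the least index $\ge j$ with $f(\underline{\nu(j)}) \neq f(\infty)$''); it should read ``the least $m \ge j$ with $f(\underline{m}) \neq f(\infty)$'', which is what your argument in fact uses.
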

\begin{proof}
The direct implication is easy using that $\LPO$ entails we have a bijection
$h : \Ninfty \cong \bN$: take $f(x) = h(x) \mod 2$.
Conversely, assume we have some $x \in \Ninfty$ and that we want to decide
whether $x = \infty$ or $x \neq \infty$ ($\MP$ then allows to conclude).
By~\Cref{thm:Ninftysel}, the following is a decidable predicate:
\[\exists y \in \Ninfty. \; f(x + y) \neq f(\infty)\]
If it holds, then $x \neq \infty$ as otherwise we'd have $x + y = \infty$.
Otherwise, we have $x \ge \underline{n}$ for every $n \in \bN$: if it were not the
case, then we would have $x = \underline{n_x}$ for some $n_x \in \bN$.
Since $f$ oscillates infinitely many times, there would then be some $k$ such
that $f(x + \underline{k}) = f(\underline{n_x + k}) \neq f(\infty)$, a contradiction.
\end{proof}

\begin{theorem}
  \label{thm:oscNinftyNotMP}
If $\Ninfty$-choice and $\MP$ hold, $\Ninfty$ has the strong Myhill property if and only if $\LPO$ holds.
\end{theorem}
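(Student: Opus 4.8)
The plan is to prove the two implications separately; the reverse implication is routine and the forward implication contains all the content.

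For the easy direction, suppose $\LPO$ holds. Then, as recalled above, $\LPO$ yields a bijection $\theta : \Ninfty \cong \bN$, and the strong Myhill property is manifestly invariant under bijection. Indeed, given injections $f, g : \Ninfty \to \Ninfty$, transport them to $f' = \theta f \theta^{-1}$ and $g' = \theta g \theta^{-1}$ on $\bN$, apply \Cref{thm:myhilliso55} to obtain a bijection $h'$ with $h' \subseteq \bigcup_{m} f'(g'f')^{m}$, and conjugate back to $h = \theta^{-1} h' \theta$. Since $f'(g'f')^{m} = \theta\,(f(gf)^{m})\,\theta^{-1}$, we get $h \subseteq \bigcup_{m} f(gf)^{m}$, so $\Ninfty$ inherits the strong Myhill property from $\bN$. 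This direction uses neither $\MP$ nor $\Ninfty$-choice.

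For the forward direction, assume $\Ninfty$-choice, $\MP$, and that $\Ninfty$ has the strong Myhill property; I want $\LPO$. The target is \Cref{lem:oscillation}: since $\MP$ holds, it suffices to exhibit a map $\phi : \Ninfty \to 2$ with $\phi(\underline n) \neq \phi(\infty)$ for infinitely many $n$. I would manufacture such a $\phi$ from a carefully chosen instance of the strong Myhill property. First I fix two injections $f, g : \Ninfty \to \Ninfty$; I may assume $f(\infty) = \infty$ and $g(\infty) = \infty$, since otherwise \Cref{lem:badinjectionLPO} already gives $\LPO$. Concretely I take $f$ and $g$ to be monotone embeddings produced by \Cref{lem:liftInflationary} from two strictly increasing inflationary maps on $\bN$, chosen so that the reduction graph $G_{f,g}$ restricted to $\underline{\bN}$ splits into the orbit structure suggested by \Cref{fig:NinftyMPLPO}: two interleaved families of strands along which an admissible bijection is forced to alternate between travelling in the $f$-direction and in the $g$-direction cofinally often. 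Feeding $f, g$ into the strong Myhill property and then into \Cref{lem:choiceToZ} (this is where $\Ninfty$-choice is used) produces a function $\iota : \Ninfty \to \bZ$ such that $h : x \mapsto (f \circ (g\circ f)^{\iota(x)})(x)$ is a bijection; note $h(\infty) = \infty$ because every $f(gf)^{m}$ fixes $\infty$ when $f$ and $g$ do.

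Given $\iota$, I would set $\phi(x) := \iota(x) \bmod 2$, which is a legitimate map into $2$ since $\bZ$ is discrete and $\iota$ is a genuine function, and then argue that $\phi(\underline n)$ differs from $\phi(\infty)$ for infinitely many $n$, at which point \Cref{lem:oscillation} closes the argument. This oscillation claim is the hard part, and it is the reason the choice of $f, g$ must be delicate. The difficulty is that the strong Myhill condition $h \subseteq \bigcup_{m\in\bZ} f\circ(g\circ f)^{m}$ does \emph{not} constrain how far along each orbit $h$ travels: a priori $\iota(x)$ may be any integer, so one cannot simply read off $\iota \in \{0,-1\}$ from an ``edge matching'' of $G_{f,g}$, and a naive pair such as the doubling/odd embeddings admits non-oscillating bijections. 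I therefore expect the main obstacle to be proving that, for the specific $f, g$, no bijection compatible with the orbit structure can have $\iota$ of eventually constant parity along $\underline{\bN}$; this is exactly where one must use that $h$ is a bijection of \emph{all} of $\Ninfty$ fixing the limit point $\infty$, together with $\MP$ and the monotonicity of $f, g$, to force the traversal direction recorded by $\phi$ to flip as $\underline n \to \infty$. Once oscillation is established, \Cref{lem:oscillation} delivers $\LPO$ and completes the equivalence.
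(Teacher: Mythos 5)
Your skeleton coincides with the paper's: $\LPO$ gives $\Ninfty \cong \bN$ for the easy direction, and for the converse you lift ladder-style injections via \Cref{lem:liftInflationary}, extract $\iota : \Ninfty \to \bZ$ through \Cref{lem:choiceToZ} (correctly identified as where $\Ninfty$-choice enters), and aim to feed an oscillating map $\Ninfty \to 2$ into \Cref{lem:oscillation}. But there is a genuine gap, and it sits exactly where you flag ``the hard part'' and then defer: you never define $f, g$, and, worse, the invariant you propose to extract, $\phi(x) = \iota(x) \bmod 2$, is the wrong one. What a ladder construction forces is the \emph{sign} of $\iota$ --- the direction of travel --- not its parity, and your intermediate claim that ``no bijection compatible with the orbit structure can have $\iota$ of eventually constant parity'' is false outright, even classically, for the injections of \Cref{fig:NinftyMPLPO}. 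Take $f(\tuple{2n,m}) = \tuple{2n,m+1}$, $f(\tuple{2n+1,m}) = \tuple{2n+1,m}$ and $g$ symmetrically, and let $h$ fix $\infty$, act as the identity on each odd ladder, and swap adjacent rungs on each even ladder. This $h$ is a bijection contained in $\bigcup_{m \in \bZ} f \circ (g \circ f)^m$, witnessed by $\iota$ taking only the values $0$ (since $f(gf)^0 = f$ is the identity on odd ladders and one rung up on even ones) and $-2$ (since $f(gf)^{-2} = g^{-1}f^{-1}g^{-1}$ is one rung down on even ladders). Its parity is everywhere constant, so the oscillation you need for $\phi = \iota \bmod 2$ is simply not a consequence of the strong Myhill property for any unit-step ladder pair; a classical counterexample suffices to kill it, since you would have to \emph{prove} the implication for the $\iota$ handed to you by choice.

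The repair is what the paper does: take $\phi = \mathrm{sgn} \circ \iota$ and obtain the forcing from the ladder \emph{bottoms}, which occur cofinally in $\bN$ by monotonicity of the pairing. Since $h$ is a bijection preserving each ladder (the statement $(\dagger)$, proved by induction on $|\iota(x)|$), the bottom $\underline{\tuple{2n,0}}$ of an even ladder has an $h$-preimage $\underline{\tuple{2n,m}}$ on the same ladder, and since every forward step strictly climbs there, $\iota$ at that preimage must be negative; dually, $\tuple{2n+1,0}$ is not in the image of $g$, so $\iota(\underline{\tuple{2n+1,0}}) < 0$ would require a $\underline{g}$-preimage of $\underline{\tuple{2n+1,0}}$, and it is here (not merely at the end) that $\MP$ does real work, locating that putative preimage in $\underline{\bN}$ before deriving the contradiction. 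Hence $\mathrm{sgn} \circ \iota$ takes both values beyond any bound, and in particular differs from $\mathrm{sgn}(\iota(\infty))$ infinitely often, which is exactly the hypothesis of \Cref{lem:oscillation}. Two smaller points: your aside that you ``may assume $f(\infty) = g(\infty) = \infty$ by \Cref{lem:badinjectionLPO}'' is idle, since you construct $f, g$ yourself and the lifts of \Cref{lem:liftInflationary} fix $\infty$ automatically; and your easy direction by conjugation is fine.
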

\begin{proof}
Let us focus on the non-trivial left-to-right implication.
Write $\tuple{-, -} : \bN^2 \to \bN$ for the standard isomorphism $\bN^2 \cong \bN$;
we will use in particular that it is strictly monotone in both components.
Let us consider the following two maps, which we picture in~\Cref{fig:NinftyMPLPO}.
\[
  \begin{array}{llcl !\qquad llcl}
f : & \bN &\longto& \bN
    &
g : & \bN &\longto& \bN
\\
    & \tuple{2n,m} &\longmapsto& \tuple{2n, m + 1}
    &
    & \tuple{2n,m} &\longmapsto& \tuple{2n, m}
\\
    & \tuple{2n+1,m} &\longmapsto& \tuple{2n+1, m}
    &
    & \tuple{2n+1,m} &\longmapsto& \tuple{2n+1, m + 1}
\end{array}
\]

Clearly, $f$ and $g$ are strictly increasing injections, so we can lift them
to corresponding injections $\underline{f}, \underline{g} : \Ninfty \to \Ninfty$
by~\Cref{lem:liftInflationary}. Apply~\Cref{lem:choiceToZ} to $\underline{f}$
and $\underline{g}$ to get the corresponding map $\iota : \Ninfty \to \bZ$
such that $h(x) = \left(\underline{f} \circ (\underline{g} \circ \underline{f})^{\iota(x)}\right)(x)$ is a bijection.

\begin{figure}[h]
\[\begin{tikzcd}[cramped,column sep=small,row sep=small]
	&&&&&&&&&&& {} \\
	&&&&&&&&& \vdots & {} & {} & \vdots \\
	&&&&&&&&& {\langle0,2\rangle} &&& {\langle0,2\rangle} \\
	& {} && {} &&&&&& {\langle1,1\rangle} &&& {\langle1,1\rangle} \\
	2 & 2 && 2 & 2 &&&&& {\langle2,0\rangle} &&& {\langle2,0\rangle} \\
	1 & 1 && 1 & 1 & {} &&& {} & {\langle0,1\rangle} &&& {\langle0,1\rangle} \\
	0 & 0 && 0 & 0 &&&&& {\langle1,0\rangle} &&& {\langle1,0\rangle} \\
	{} & {} && {} & {} &&&&& {\langle0,0\rangle} &&& {\langle0,0\rangle} \\
	&&&&&&&& {f} & {\mathbb{N}} &&& {\mathbb{N}} & {g}
	\arrow[color={rgb,255:red,153;green,92;blue,214}, dashed, no head, from=3-10, to=2-11]
	\arrow[color={rgb,255:red,153;green,92;blue,214}, from=3-13, to=3-10]
	\arrow[color={rgb,255:red,168;green,31;blue,0}, from=4-10, to=4-13]
	\arrow[color={rgb,255:red,168;green,31;blue,0}, dashed, no head, from=4-13, to=2-12]
	\arrow[dashed, no head, from=5-1, to=4-2]
	\arrow[from=5-2, to=5-1]
	\arrow[from=5-4, to=5-5]
	\arrow[dashed, no head, from=5-5, to=4-4]
	\arrow[color={rgb,255:red,23;green,94;blue,23}, dashed, no head, from=5-10, to=1-12]
	\arrow[color={rgb,255:red,23;green,94;blue,23}, from=5-13, to=5-10]
	\arrow[from=6-1, to=5-2]
	\arrow[from=6-2, to=6-1]
	\arrow[from=6-4, to=6-5]
	\arrow[from=6-5, to=5-4]
	\arrow["{\text{$\infty$ many times}}", squiggly, maps to, from=6-6, to=6-9]
	\arrow[color={rgb,255:red,153;green,92;blue,214}, from=6-10, to=3-13]
	\arrow[color={rgb,255:red,153;green,92;blue,214}, from=6-13, to=6-10]
	\arrow[from=7-1, to=6-2]
	\arrow[from=7-2, to=7-1]
	\arrow[from=7-4, to=7-5]
	\arrow[from=7-5, to=6-4]
	\arrow[color={rgb,255:red,168;green,31;blue,0}, from=7-10, to=7-13]
	\arrow[color={rgb,255:red,168;green,31;blue,0}, from=7-13, to=4-10]
	\arrow["{\text{even ladder}}"{description}, draw=none, from=8-1, to=8-2]
	\arrow["{\text{odd ladder}}"{description}, from=8-4, to=8-5]
	\arrow[color={rgb,255:red,153;green,92;blue,214}, from=8-10, to=6-13]
	\arrow[color={rgb,255:red,153;green,92;blue,214}, from=8-13, to=8-10]
	\arrow["{{:}}"{description}, draw=none, from=9-9, to=9-10]
	\arrow[shift right, shorten <=8pt, shorten >=8pt, from=9-10, to=9-13]
	\arrow[shift right, shorten <=8pt, shorten >=8pt, from=9-13, to=9-10]
	\arrow["{{:}}"{description}, draw=none, from=9-14, to=9-13]
\end{tikzcd}\]
\caption{Representation of the bipartite graph generated by $f$ and $g$,
which is intuitively obtained by generating infinitely many ``ladders'' as pictured
on the left. Up to adding $\infty$, this graph also represents $\underline{f}$
and $\underline{g}$. The map $\iota$ induced by the Myhill property cannot
always say ``go forward'' on an even ladder (and vice-versa for the odd ladders
and ``backwards''), which can be used to deduce $\LPO$.}
\label{fig:NinftyMPLPO}
\end{figure}
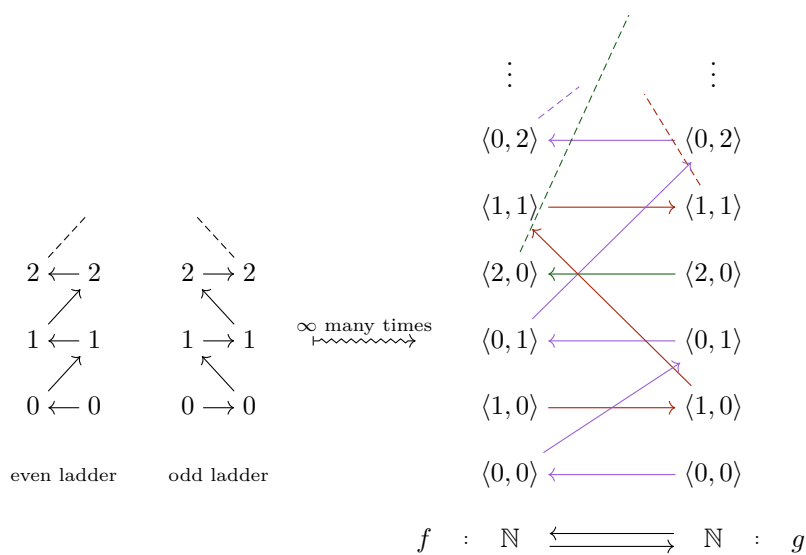

First note that for any $n \in \bN$ and $x \in \Ninfty$, we can prove by
induction over $|\iota(x)|$ that we have
\[\left(\exists m \in \bN. \; h(x) = \underline{\tuple{n,m}}\right)
\quad \Longleftrightarrow \quad \left(\exists m' \in \bN. \; x = \underline{\tuple{n,m'}}\right)
\qquad \qquad (\dagger)\]

Define $\mathrm{sgn} : \bZ \to 2$ as the map with $\mathrm{sgn}(d) = 1$ if and
only if $d \ge 0$. By~\Cref{lem:oscillation}, it suffices to show that
$\mathrm{sgn} \circ \iota$ takes the values $0$ and $1$ infinitely often
over $\underline{\bN}$. To this end, fix some $a \in \bN$, $i \in 2$ and let us
look for some values $b \ge a$ with $\mathrm{sgn}(\iota(b)) = i$.
First note that for any such $a$, there is $n_0 \in \bN$ such that
\[\forall n \ge n_0. \; \forall m \in \bN. \; a \le \tuple{n, m}\]
Pick $n$ such that $2n + i \ge n_0$. We make a case distinction on $i$:
\begin{itemize}
\item If $i = 0$, by~$(\dagger)$, $h^{-1}(\underline{\tuple{2n,0}}) = \underline{\tuple{2n, m}}$
  for some $k \in \bN$.
  Set $d = \iota(\underline{\tuple{2n,m}})$ and assume towards a contradiction
  that $d \ge 0$. By definition
  definition $h(\underline{\tuple{2n,m}}) = (\underline{f} \circ \underline{g})^d(\underline{f}(\underline{\tuple{2n, m}}))$. Using the fact that
  $\underline{f}$ and $\underline{g}$ are liftings, we thus have
  $h(\underline{\tuple{2m,m}}) = \underline{(f \circ g)^d(\tuple{2n,m+1})} = \underline{\tuple{2n,0}}$.
  But we have that $f \circ g$ is increasing, so we cannot have $d \ge 0$.
  So $d < 0$ and $\mathrm{sgn}(\iota(h^{-1}(\underline{\tuple{2n,0}}))) = 0$.
\item If $i = 1$, set $d = \iota(\underline{\tuple{2n+1, 0}})$.
  We have that $h(\underline{2n+1,0})$ should be well-defined as
  \[\left(\underline{f} \circ (\underline{g} \circ \underline{f})^d\right)(\tuple{2n+1,0})\]
  In particular, this would mean that if $d < 0$, we should have that
  $z = \underline{g}^{-1}(\tuple{2n+1,0})$ should be defined. We cannot
  have $z = \infty$, for it would contradict injectivity of $\underline(g)$,
  so by $\MP$, we have some $n', m' \in \bN$ such that $z = \underline{\tuple{n',m'}}$.
  But then we have $\underline{g}(\tuple{n',m'}) = \underline{g(\tuple{n',m'})} = \underline{\tuple{2n+1,0}}$, which is a contradiction as
  $\tuple{2n+1,0}$ is not in the image of $g$. Hence
  $d \ge 0$ and $\mathrm{sgn}(\iota(\underline{\tuple{2n+1,0}})) = 1$.
\end{itemize}
\end{proof}

We can use that to prove that $\Ninfty$ does not have the Myhill property in Kleene-Vesley
realizability thanks to the following lemma (for which, we assume the reader is familiar with Kleene-Vesley realizability). Recall that a partitioned modest set $Z$ consists of a pair $(|Z|, \Vdash_Z)$ consisting of a carrier
and a relation $\Vdash_Z \subseteq \mathcal{K}_2 \times |Z|$ which is actually a bijection.

\begin{lemma}
\label{lem:transfermodestcountable}
Working classically (and meta-theoretically), let $X$ be a partitioned modest set in the Kleene-Vesley topos with 
countable carrier $|X|$.
Then if ``$X$ has the Myhill property'' holds in Kleene-Vesley realizability, so does ``$X$ has the strong Myhill property''.
\end{lemma}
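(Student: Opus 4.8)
The plan is to argue meta-theoretically and classically, translating both internal statements into statements about continuous operators on the countable carrier. First I would unwind the realizability. Because $X$ is a partitioned modest set, $|X|$ is in bijection with its set of realizers $\{\alpha_x\}_{x\in|X|}\subseteq\mathcal{K}_2$; an internal injection $f:X\to X$ is then an external injection of $|X|$ carried by a continuous tracker $\alpha_x\mapsto\alpha_{f(x)}$, an internal bijection is such a tracker that is moreover externally bijective with tracked inverse, and an internal subobject $A\subseteq X$ is a realized predicate on $|X|$. Fixing (classically) an enumeration of $|X|$, the bipartite graph $G_{f,g}\subseteq(|X|+|X|)^2$ becomes a concrete countable graph whose edges are computed from the trackers of $f$ and $g$. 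Under this dictionary, ``$X$ has the strong Myhill property'' in Kleene-Vesley realizability amounts to the existence of a single continuous functional sending trackers for injections $f,g$ to a tracker for a bijection contained in $\bigcup_{m\in\bZ} f\circ(g\circ f)^m$, while ``$X$ has the Myhill property'' is the analogous functional given the extra input of a valid pair $A,B$.

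Next I would try to build the strong-Myhill functional by replaying the back-and-forth construction of \Cref{thm:myhilliso55} over the enumerated carrier $|X|$, computing the edges of $G_{f,g}$ with the trackers of $f$ and $g$. The only thing that construction needs beyond $f$ and $g$ is the ability to perform the tests ``is this vertex already in the domain/codomain of the current finite partial isomorphism?''; over $\bN$ these are instances of decidable equality, but for a general modest $X$ equality need not be decidable (as in $\Ninfty$, which is the whole point). The role of the hypothesis is precisely to resolve these case distinctions: from a realizer for the Myhill property one manufactures, for the finitely many points the construction must compare, reduction data $A,B,f,g$ whose associated Myhill bijection detects whether those points coincide. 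The hard part will be exactly this extraction --- choosing the probing data so that the returned bijection is informative about equality of the relevant vertices, and verifying that the resulting decision procedure is itself continuously tracked rather than merely classically correct. This is the step that must break for $\Ninfty$, consistently with the paper's eventual conclusion that $\Ninfty$ lacks the Myhill property in Kleene-Vesley realizability.

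Finally, assuming the case distinctions above are available and realized, the back-and-forth produces an external bijection $h$ on $|X|$ contained in $\bigcup_{m\in\bZ} f\circ(g\circ f)^m$, where the enumeration is used only to sequence the stages and never appears inside a tracker; hence $h$ is carried by a continuous functional that depends uniformly on the trackers of $f$ and $g$. Collecting these observations yields a realizer for the strong Myhill statement, so ``$X$ has the strong Myhill property'' holds in Kleene-Vesley realizability whenever ``$X$ has the Myhill property'' does. I would take care to double-check the two points where an ostensibly external construction could fail to internalize: that a \emph{single} functional works for all $f,g$, and that the subordination clause $h\subseteq\bigcup_{m\in\bZ} f\circ(g\circ f)^m$ is genuinely realized and not just externally true.
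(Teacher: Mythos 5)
There is a genuine gap, and it lies exactly where you flag it: the step where you ``manufacture, for the finitely many points the construction must compare, reduction data $A,B,f,g$ whose associated Myhill bijection detects whether those points coincide'' is the entire content of the lemma, and you give no construction for it. There is no reason the Myhill realizer can serve as an equality oracle: nothing in the Myhill property forces the returned bijection to reveal whether two given realizers code the same point, and your proposal would in effect need ``Myhill $\Rightarrow$ decidable equality'' for all partitioned modest $X$ with countable carrier, which is unsubstantiated. A second, independent obstruction is your claim that the classical enumeration of $|X|$ ``never appears inside a tracker.'' In a staged back-and-forth, $h$ is defined on the $n$-th enumerated point at stage $n$; a tracker for $h$ receives an arbitrary realizer $\alpha_x$ and must output $\alpha_{h(x)}$, which requires locating $x$ in the enumeration --- i.e., deciding equality against the enumerated points --- the very operation that is unavailable (this is precisely why $\Ninfty$ is a problem). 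So even granting the equality tests at construction time, the resulting $h$ would be externally defined but not continuously tracked, and the subordination witnesses $m \in \bZ$ would not be uniformly computable from $e_x$ as the internal strong Myhill statement demands.

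The paper's proof avoids rebuilding the bijection altogether: it keeps the bijection $h$ that the Myhill realizer $e_{\mathrm{My}}$ itself returns, and uses the hypothesis not to decide equality but to extract the integer witnesses from the \emph{reduction-preservation} realizer $e_{h,\preceq}$. The trick is in the choice of $A$ and $B$: using classical countability of $|X|$ (and the fact that the set of realizers is countable, since $X$ is partitioned modest), one fixes a family $(C(x))_{x \in |X|}$ of Turing degrees forming an antichain relative to an oracle tupling $e_f$, $e_g$ and all the $e_x$, and defines $A(x)$ to consist of pairs $\tuple{e, e'}$ with $e \Vdash_\bZ m$ and $e' \in C_{(f \circ g)^m(x)}$, with $B(x) = A(g(x))$. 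The antichain condition forces any realizer of $x \in A \Rightarrow h(x) \in B$ to carry its witness in the integer component, since it cannot compute one degree from another; a search procedure $p'$ then recovers $m$ by feeding in computable approximations, with correctness secured by the fact that a Turing degree is dense in $\Baire$, so convergence on a finite prefix is consistent with a genuine element of $C(x)$. Note also that the trivial reduction realizers $e_0$, $e_1$ for these $A,B$ are fixed in advance and independent of $x$, which is what makes a single functional work for all $f,g$. This degree-theoretic instrumentation of the predicates, rather than any back-and-forth over the carrier, is the idea your proposal is missing.
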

\begin{proof}
For this proof, write $e \Vdash \varphi$ for ``$e$ realizes $\varphi$'' and $\tuple{ -, -, \ldots}$ for the standard tupling in a PCA. We will also use the
$\lambda$ notation when convenient to work in $\mathcal{K}^{\mathrm{rec}}_2$.
Assume we have a realizer $e_{\mathrm{My}} \in \mathcal{K}_2^{\mathrm{rec}}$ for ``$X$ has the Myhill property''.
Shuffling around the arguments of $e_{\mathrm{My}}$ and suppressing those that have trivial computational content due to $X$ being a modest set, we have that whenever $e_f \Vdash_{X \to X} f$ and $e_g \Vdash_{X \to X} g$
for injections $f, g : |X| \to |X|$ and we have, for some $A, B : |X| \to \powerset(\mathcal{K}_2)$,
realizers
\[ e_{f,\preceq} \Vdash \forall x \in X. \; x \in A \Leftrightarrow f(x) \in B \quad \text{and}
\quad e_{g,\preceq} \Vdash \forall x \in X. \; x \in B \Leftrightarrow g(x) \in A
\]
then we have some bijection $h : |X| \to |X|$ and codes
\[e_h \Vdash_{X \to X} h \qquad e_{h^{-1}} \Vdash_{X \to X} h^{-1} \quad \text{and} \quad e_{h, \preceq} \Vdash \forall x \in X. \; x \in A \Rightarrow h(x) \in B\] such that $e_{\mathrm{My}} \; e_f \; e_g \; e_{f, \preceq} \; e_{g, \preceq} \simeq \tuple{e_h, e_{h^{-1}}, e_{h,\preceq}}$.

We will use this to build a realizer $e_{\mathrm{sMy}} \in \mathcal{K}_2^{\mathrm{rec}}$ that $X$ has the strong Myhill property, that is, we should have for every $e_f \Vdash_{X \to X} f$ and $e_g \Vdash_{X \to X} g$ for injections $f, g : |X| \to |X|$ that there is a bijection $h : |X| \to |X|$
and codes $e_h \Vdash_{X \to X} h$, $e_{h^{-1}} \Vdash_{X \to X} h^{-1}$
and $e_{h, \mathrm{sMy}}$ such that, for every $e_x \Vdash_X x$, there is $m \in \bZ$ such that  $e_{h, \mathrm{sMy}} \; e_x \Vdash_{\bZ} m$ and $h(x) = f \circ (g \circ f)^m$.
To then argue that our proposed realizer is sound,
we will exploit correctness of $e_{\mathrm{My}}$ for different predicates $A_{e_f, e_g}, B_{e_f, e_g} : |X| \to \powerset(\mathcal{K}_2)$, where $e_f, e_g \in \mathcal{K}_2$ are codes for injections $f, g : |X| \to |X|$.

Concretely $e_{\mathrm{sMy}}$ is defined by
\[ e_{\mathrm{sMy}} = \lambda e_f. \lambda e_g. p \; \left(e_{\mathrm{My}} \; e_f \; e_g \; e_0 \; e_1\right) \]
where $p$, $e_0$ and $e_1$ are characterized as follows:
\begin{itemize}
\item $e_0$ is a code for the function $X \longto \left( \bZ \times \Baire \to \bZ \times \Baire\right)^2$
which ignores its argument and produces the following two functions:
\[(m, p) \longmapsto (m+1,p) \qquad (m, p) \longmapsto (m-1,p)\]
\item similarly, $e_0$ is a code for a function $X \longto \left( \bZ \times \Baire \to \bZ \times \Baire\right)^2$
that ignores its argument and produces a code for two functions, but this time, the functions in question are both the identity
\item $p$ codes a map $\id_{\mathcal{K}_2} \times \id_{\mathcal{K}_2} \times p' : (\mathcal{K}_2)^3 \to (\mathcal{K}_2)^3$ that acts componentwise. The first two components are the identity, but the last one is non-trivial. $p'$ takes as input a code for map $\iota' : X \to \left(\bZ \times \Baire \partto \bZ \times \Baire\right)^2$ and outpus a code for a map $X \partto \bZ$ as follows: the $X$ argument $x$ of $p'$ is fed to $\iota'$
unchanged and we take a code for the corresponding first component $a_x : \bZ \times \Baire \partto \bZ \times \Baire$. Define the map $a'_x : \Baire \partto \bZ$ by $a'_x(s) = \pi_1(a_x(0,s))$. What $p'$ applied to a
code for $x$ does is looking for any computable\footnote{It would be sufficient to restrict to sequence that
are ultimately $0$.} $s \in \Baire$ such that $a'_x$ is well-defined and outputs
the code of $a'_x(s)$.
\end{itemize}

Now let's see why this realizer is correct. Assume we have $f, g : |X| \to |X|$ injective and $e_f \Vdash_{X \to X} f$ and $e_g \Vdash_{X \to X} g$. Using countability of $|X|$,
fix a map $C : |X| \to \powerset(\mathcal{K}_2)$ such that $(C(x))_{x \in |X|}$
is an antichain in the Turing degrees relative to the infinite tupling of
$e_f$, $e_g$ and the countably many\footnote{This set is countable because
$|X|$ is countable and $X$ is partitioned modest.} $e_x$ such that $e_x \Vdash_X x$ for some $x \in |X|$
 (that is, if we take this tupling as oracle, we can't compute Turing reductions
between $C(x)$ and $C(y)$ whenever $x \neq y$). $C$ corresponds, in the semantics of Kleene-Vesley
realizability, to a subset of $X$. We then define $A : |X| \to \powerset(\mathcal{K}_2)$ such that $A(x)$
consists of code $\tuple{e, e'}$ such that $e \Vdash_\bZ m$ for some $m \in \bZ$ and $e' \in C_{(f \circ g)^m(x)}$.
$B : |X| \to \powerset(\mathcal{K}_2)$ is then simply defined by $B(x) = B(g(x))$.

The basic intuition is that, for providing evidence that $x$ belongs to $A$ (or $B$), one must provide some evidence that we can compute the degree $C(y)$ associated with some $y$ in the same connected component in the bipartite graph generated by $f$ and $g$ and some $m \in \bZ$ that tells us how to travel from $y$ to $x$ in the graph. With this in mind, it is not difficult to check that we have
\[e_0 \Vdash \forall x \in X. \; x \in A \Leftrightarrow f(x) \in B \qquad \text{and}\qquad
e_1 \Vdash \forall x \in X. \; x \in A \Leftrightarrow g(x) \in B
\]
We thus know that $e_{\mathrm{My}} \; e_f \; e_g \; e_0 \; e_1 \simeq \tuple{e_h, e_{h^{-1}}, e_{h, \preceq}}$
with $e_h$ and $e_{h^{-1}}$ describing the bijection $h$ we desire and $e_{h, \preceq} \Vdash \forall x \in X. \; h(x) \in A \Leftrightarrow x \in B$. Now, we simply need to know that, for any $e_x \Vdash_X x$, $p' \; e_{h, \preceq} \; e_x$ computes a code for some $m \in \bZ$ such that $h(x) = f \circ (g \circ f)^m(x)$.
Here the procedure encoded in $p'$ first turns produces some $e' \in \mathcal{K}_2$ which is computable
from $\tuple{e_f, e_g, e_x}$ such that
\[e' \Vdash x \in A \Rightarrow h(x) \in B\]
More concretely, this means that $e'$ represents a map
$\alpha : \bZ \times C(x) \to \bZ \times C(x)$ such that 
$(\pi_1 \circ \alpha)(n,s) = m$ implies that $h(x) = f \circ (g \circ f)^{m-n}(x)$. Ultimately,
$p'$ searches for some approximation of $s$ that makes the code output some $m$ for $n = 0$.
Since convergence must happen after finding a finite prefix of $s$, $p'$ will find such a $k$.
But since $C(x)$ is a Turing degree, it is dense in $\Baire$, so the prefix used must be consistent
with some $s \in C(x)$ and thus the obtained $k$ satisfies $h(x) = f \circ (g \circ f)^{m-n}(x)$.
\end{proof}

\begin{corollary}
\label{cor:KVnotNinftyMP}
Kleene-Vesley realizability does not validate that $\Ninfty$ has the Myhill property.
\end{corollary}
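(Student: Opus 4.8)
The plan is to assemble \Cref{lem:transfermodestcountable} and \Cref{thm:oscNinftyNotMP} and then derive a contradiction from the failure of $\LPO$ in Kleene-Vesley realizability. First I would record that $\Ninfty$ is a countable partitioned modest set in the Kleene-Vesley topos. Indeed, with $\Ninfty \subseteq 2^{\mathbb{N}}$ as officially defined, each conatural $p$ is realized exactly by its own bit sequence, so the realizability relation $\Vdash_{\Ninfty}$ is a partial bijection $\mathcal{K}_2 \partto |\Ninfty|$, whence $\Ninfty$ is partitioned modest. Working classically at the meta-level, its carrier is $\{0^n 1 0^\omega \mid n \in \bN\} \cup \{0^\omega\}$, which is countable. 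Hence \Cref{lem:transfermodestcountable} applies to $X = \Ninfty$.

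Now suppose, towards a contradiction, that Kleene-Vesley realizability validates ``$\Ninfty$ has the Myhill property''. By \Cref{lem:transfermodestcountable} it then validates ``$\Ninfty$ has the strong Myhill property''. As recalled earlier, Kleene-Vesley realizability validates $\Ninfty$-choice (since $\Ninfty$ is a subset of $\Cantor$, hence internally projective). It also validates $\MP$: given a realizer of $p \neq 0^\omega$, no realizer of $\forall n.\, p_n = 0$ can exist, so meta-theoretically $p$ has a $1$; the recursive unbounded search for the least such index therefore terminates and produces a realizer of $\exists n.\, p_n = 1$, uniformly in $p$. Since \Cref{thm:oscNinftyNotMP} is proven in the internal constructive logic, it holds in Kleene-Vesley realizability; combined with the validity of $\Ninfty$-choice and $\MP$, we conclude that Kleene-Vesley realizability validates ``$\Ninfty$ has the strong Myhill property if and only if $\LPO$''. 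Together with the previous step, this forces Kleene-Vesley realizability to validate $\LPO$.

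This is the desired contradiction, since $\LPO$ fails in Kleene-Vesley realizability by the usual continuity argument: a realizer of $\LPO$ would be a recursive, hence continuous, functional deciding for an arbitrary $p \in 2^{\mathbb{N}}$ whether $p = 0^\omega$; but whatever it outputs on $0^\omega$ is determined by a finite prefix, which is also a prefix of some $q \neq 0^\omega$ on which the realizer would have to answer differently. Hence Kleene-Vesley realizability does not validate that $\Ninfty$ has the Myhill property. The only genuinely non-routine ingredients are the validity of $\MP$ and the failure of $\LPO$ in this particular model; everything else is bookkeeping ensuring that the meta-theoretic transfer lemma and the internally-proven \Cref{thm:oscNinftyNotMP} compose correctly.
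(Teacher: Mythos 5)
Your proof is correct and takes essentially the same route as the paper: the paper's proof likewise observes that $\Ninfty$ is a countable partitioned modest set, applies \Cref{lem:transfermodestcountable} to upgrade the Myhill property to the strong Myhill property, and then contradicts \Cref{thm:oscNinftyNotMP}. The additional details you spell out (the partitioned modest structure of $\Ninfty$, the validity of $\Ninfty$-choice and $\MP$, and the continuity argument for the failure of $\LPO$ in Kleene-Vesley realizability) are exactly the standard facts the paper leaves implicit.
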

\begin{proof}
$\Ninfty$ is a partitioned modest set in Kleene-Vesley realizability with a countable carrier.
So if it had the Myhill property, it would have the strong Myhill property by~\Cref{lem:transfermodestcountable}.
But this is not the case by~\Cref{thm:oscNinftyNotMP}.
\end{proof}

\subsection{Failure of stability under exponentiation, product and coproducts}
\label{subsec:negEM}

Here we show
that if the class of sets with the Myhill property
is closed under either products, exponentiations or coproducts, $\LPO$ implies
full excluded middle, an implication which is not provable constructively~\cite{HL16}.
This relies on sets parameterized by truth values rather than concrete counter-examples
much like in the proofs that Cantor-Bernstein implies excluded middle
in~\cite{CBEM}. 

\begin{lemma}
  \label{lem:expEM}
For any $p \in \Omega$, if $(\{0,1\} \uplus \{2 \mid p\})^\bN$ has the Myhill property,
then $p \vee \neg p$ holds.
\end{lemma}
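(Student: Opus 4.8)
The plan is to adapt the truth-value-parametrised strategy of~\cite{CBEM}. I will exhibit two subsets $A,B$ of $X := (\{0,1\}\uplus\{2\mid p\})^\bN$ that injectively reduce to one another \emph{unconditionally} (regardless of whether $p$ holds), chosen so that $A$ is a searchable copy of $\Ninfty$ and $B$ differs from $A$ by a single point whose presence is equivalent to $p$ and detectable by a decidable test. Reading off that test from the bijection supplied by the Myhill property will then decide $p$. Throughout I write $ix$ for the sequence obtained by prefixing the letter $i$ to $x$, and $x'$ for the tail when $x = ix'$.

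Concretely, recall that $\Ninfty \subseteq 2^\bN$ is exactly the set of bit sequences with at most one $1$, so it sits inside $X$ as $A := \{\, 0^n 1 0^\omega \mid n \in \bN \,\} \cup \{0^\omega\}$, a copy of $\Ninfty$ over which \Cref{thm:Ninftysel} lets us decide existential quantification of decidable predicates. I set $C := \{2\,0^\omega\}$ and $B := A \cup C$. Since $2$ is a genuine letter of the alphabet precisely when $p$ holds, $C$ is inhabited iff $p$; moreover $2\,0^\omega$ is the unique element of $B$ with first letter $2$ (every element of $A$ has first letter $0$ or $1$, the latter only for $\underline 0 = 10^\omega$), so within $B$ the decidable test ``first letter $2$'' characterises membership in $C$.

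Next I check that both reductions hold unconditionally via explicit total injections of $X$. For $A \preceq_1^X B$ I take $f(x) = 0x$: its image lies in the first-letter-$0$ cylinder, hence misses $C$, and $0x \in A \Leftrightarrow x \in A$, so $f^{-1}(B) = A$. For $B \preceq_1^X A$ I realise the Hilbert-hotel injection $\Ninfty + 1 \hookrightarrow \Ninfty$ (send the extra point to $\underline 0$, shift $A$ up by one) by a decidable case split on the first letter:
\[ g(0x') = 0\,0\,x', \qquad g(1x') = 0\,1\,x', \qquad g(2x') = 1\,x'. \]
The branches have image prefixes $00$, $01$ and $1$, so $g$ is injective; and a direct computation gives $g(x)\in A \Leftrightarrow x \in A\cup\{2\,0^\omega\}$, i.e.\ $g^{-1}(A)=B$. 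The essential point is that $g$ (and $f$) are total and uniform: no case ever branches on whether $p$ holds. Applying the Myhill property now yields a bijection $h:X\to X$ with $h[A]=B$.

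Finally comes the extraction. For $a\in A$ we have $h(a)\in B$, so by the remark above $h(a)_0 = 2$ holds iff $h(a)\in C$; as $D$ has decidable equality, $Q(a):=[\,h(a)_0=2\,]$ is a decidable predicate on $A\cong\Ninfty$. Since $C\subseteq B=h[A]$, we get $(\exists a\in A.\ Q(a)) \Leftrightarrow C\neq\emptyset \Leftrightarrow p$ (if $\neg p$ then no coordinate of any point is $2$, so $Q$ fails everywhere). Deciding the left-hand existential by \Cref{thm:Ninftysel} therefore decides $p$, giving $p\vee\neg p$. I expect the main obstacle to be the middle step: keeping both reductions \emph{total uniform} injections that never case on $p$. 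This is exactly what forces $C$ to be a \emph{single} point carrying a decidable tag (its first letter); a larger conditional piece, such as a whole copy of $\Ninfty$ inside the $2$-cylinder, would introduce a second point at infinity and, by compactness of $\Ninfty$, could not be injected back into $A$ at all. Once the reductions are in place, searchability of $\Ninfty$ makes the decision immediate.
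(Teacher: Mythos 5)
Your proof is correct and takes essentially the same route as the paper's: there too $A$ is a copy of $\Ninfty$ inside $\{0,1\}^\bN$ (the paper uses the even conaturals $\{2x \mid x \in \Ninfty\}$), $B = A \cup \{2\,0^\omega \mid p\}$, the reductions are zero-prepending shifts with the leading $2$ rewritten to $1$ (sending the conditional point to $\underline{0}$), and $p$ is decided by applying \Cref{thm:Ninftysel} to the decidable predicate ``$h(a)_0 = 2$''. The differences are cosmetic (single versus double zero-shifts, your three-way case split in $g$); just note that $C$ should officially be the conditional singleton $\{2\,0^\omega \mid p\}$ given by bounded comprehension, exactly as the paper writes it.
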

\begin{proof}
Call $2_\infty$ the sequence defined by $(2_\infty)_0 = 2$ and $(2_\infty)_{n + 1} = 0$
for all $n \in \bN$.
Take $A = \{2x \mid x \in \Ninfty\}$ and $B = A \cup \{ 2_\infty \mid p\}$.
Reductions between $A$ and $B$ are witnessed by the following injections $f$ and $g$:
\begin{itemize}
  \item $f(s)$ is defined by shifting $s$ by inserting two zeroes at the beginning:
\[f(s)_0 = f(s)_1 = 0
\quad \text{and} \quad f(s)_{n + 2} = s_{n + 2}\]
  \item $g(s)$ is defined by case depending on the value of $s_0$:
    \begin{itemize}
      \item if $s_0 = 2$, then we define $g(s)_0 = 1$ and $g(s)_{n+1} = s_{n+1}$
      \item otherwise, we set $g(s)_0 = g(s)_1 = 0$ and $g(s)_{n+2} = s_n$
    \end{itemize}
\end{itemize}
These two functions are easily checked to be injections and we do have
$f^{-1}(B) = A$ and $g^{-1}(B) = A$. Now assume that, thanks to the Myhill
property, we have a bijection $h : 
(\{0,1\} \uplus \{2 \mid p\})^\bN
\to
(\{0,1\} \uplus \{2 \mid p\})^\bN$
such that $h(A) = B$.
By \Cref{thm:Ninftysel}, we can decide whether there exists
$x \in \Ninfty$ such that $h(x)_0 = 2$ or not.
If there is, then we know that $p$ must hold. Otherwise, assuming $p$,
we know that $2_\infty \in B$. But then since $B = h(A)$ and $A \subseteq \Ninfty$,
we would have a contradiction. Hence $\neg p$ holds in that case.
\end{proof}
\begin{corollary}
  \label{cor:coproductMyhill}
``$A^B$ has the Myhill property whenever $A$ and $B$ both have the strong Myhill property''
entails excluded middle.
\end{corollary}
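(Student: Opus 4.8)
The plan is to derive full excluded middle by showing that, under the stated closure hypothesis, the premise of \Cref{lem:expEM} is satisfied for \emph{every} truth value. Recall that excluded middle is the assertion that $p \vee \neg p$ holds for all $p \in \Omega$, so I would fix an arbitrary $p \in \Omega$ and aim to produce $p \vee \neg p$ from it.

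First I would recognize the base of the exponential appearing in \Cref{lem:expEM}, namely $\{0,1\} \uplus \{2 \mid p\}$, as a subfinite set. Indeed it is a subset of the finite set $3 = \{0,1,2\}$: it always contains $0$ and $1$, and contains $2$ precisely when $p$ holds, so the inclusion $\{0,1\} \uplus \{2 \mid p\} \subseteq 3$ holds unconditionally. Hence by \Cref{lem:subfinite} this set has the strong Myhill property, uniformly in $p$. Meanwhile the exponent $\bN$ has the strong Myhill property by \Cref{thm:myhilliso55}.

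Next I would invoke the closure hypothesis with $A = \{0,1\} \uplus \{2 \mid p\}$ and $B = \bN$: since both $A$ and $B$ enjoy the strong Myhill property, the exponential $A^B = (\{0,1\} \uplus \{2 \mid p\})^\bN$ has the Myhill property. Feeding this conclusion into \Cref{lem:expEM} yields $p \vee \neg p$. As $p \in \Omega$ was arbitrary, excluded middle follows, which is what we wanted.

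There is no genuine obstacle here: the corollary is an immediate consequence of \Cref{lem:expEM} once one identifies $\{0,1\} \uplus \{2 \mid p\}$ as a subset of a finite set. The only point that warrants (minor) care is matching the shape of the set in \Cref{lem:expEM} against the exponential form $A^B$ of the closure statement, taking the subfinite factor as the base $A$ and $\bN$ as the exponent $B$, and noting that the hypothesis only requires the strong Myhill property of the two factors while \Cref{lem:expEM} only needs the plain Myhill property of the exponential, so the two statements dovetail exactly.
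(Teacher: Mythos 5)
Your proposal is correct and is essentially the paper's own proof: the paper also combines \Cref{lem:expEM} with \Cref{lem:subfinite} (for the subfinite base $\{0,1\} \uplus \{2 \mid p\} \subseteq 3$) and \Cref{thm:myhilliso55} (for the exponent $\bN$), then applies the closure hypothesis. Your write-up merely spells out the instantiation $A = \{0,1\} \uplus \{2 \mid p\}$, $B = \bN$ that the paper leaves implicit.
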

\begin{proof}
Combine the previous lemma with~\Cref{lem:subfinite} and Myhill's isomorphism
theorem.
\end{proof}

\begin{lemma}
For any $p \in \Omega \subseteq 1$, if $p + \bN$ has the Myhill property
and $\LPO$ holds, then $p \vee \neg p$ holds.
\end{lemma}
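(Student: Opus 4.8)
The plan is to follow the same template as the previous lemma: produce, via the Myhill property, a bijection $h$ of $p + \bN$ whose behaviour can be interrogated by $\LPO$, with the two possible answers yielding $p$ and $\neg p$ respectively. Here I regard $p$ as a subsingleton, so that $\inl(p) = \{\inl(a) \mid a \in p\}$ is inhabited exactly when $p$ holds, whereas $\inr(\bN)$ is always a faithful copy of $\bN$. The idea is to choose subsets $A, B$ of $p + \bN$ that are always equivalent as reductions, but whose equivalence forces any witnessing bijection to move the (potential) point $\inl(a)$ out of $\inl(p)$ and into $\inr(\bN)$.

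Concretely, I would take
\[ A = \inl(p) \cup \{\inr(2k) \mid k \in \bN\} \qquad\text{and}\qquad B = \{\inr(2k) \mid k \in \bN\}, \]
so that $\inl(p) \subseteq A$ and $B \subseteq \inr(\bN)$. First I would write down injections witnessing $A \preceq_1^{p+\bN} B$ and $B \preceq_1^{p+\bN} A$: for instance, let $f$ map $\inl(a) \mapsto \inr(0)$, $\inr(2k) \mapsto \inr(2k+2)$ and $\inr(2k+1) \mapsto \inr(2k+1)$, and let $g$ map $\inl(a) \mapsto \inr(1)$, $\inr(2k) \mapsto \inr(2k)$ and $\inr(2k+1) \mapsto \inr(2k+3)$. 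These are defined uniformly by copairing and case analysis on parity, independently of whether $p$ holds; a routine check shows they are injective and satisfy $f^{-1}(B) = A$ and $g^{-1}(A) = B$ as set identities. The Myhill property of $p + \bN$ then yields a bijection $h : p + \bN \to p + \bN$ with $h(A) = B$. In particular $h(\inl(a)) \in B \subseteq \inr(\bN)$ for every $a \in p$.

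The extraction step uses $\LPO$. For each $n \in \bN$, the proposition ``$h(\inr(n))$ lies in the image of $\inl$'' is decidable, since in $p + \bN$ every element is decidably of the form $\inl(\cdot)$ or $\inr(\cdot)$; feeding the corresponding bit sequence to $\LPO$ decides whether some $n$ has $h(\inr(n)) \in \inl(p)$ or none does. If some $n$ does, then $h(\inr(n)) = \inl(a)$ with $a \in p$, so $p$ holds. If none does, I claim $\neg p$: assuming $p$, the point $\inl(a)$ together with every $\inr(n)$ would be sent by $h$ into $\inr(\bN)$ (the former by the previous paragraph, the latter by the case hypothesis), so the whole of $p + \bN$ would map into $\inr(\bN)$ and nothing could map onto $\inl(a)$, contradicting surjectivity of $h$. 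Either way $p \vee \neg p$ holds.

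I expect the main obstacle to be the construction of the two reductions without secretly deciding $p$. The subtlety is that a reduction $C = k^{-1}(D)$ forces every point outside $C$ to be sent outside $D$, and the only elements of $p + \bN$ outside $\inr(\bN)$ are those of $\inl(p)$, which we cannot produce uniformly in $p$. This is exactly why I arrange for both $A$ and $B$ to omit infinitely many points of $\inr(\bN)$ (here the odd ones): this leaves enough room inside $\inr(\bN)$ to discharge every ``$\notin A$'' and ``$\notin B$'' obligation there, so that $f$ and $g$ never need to reach into $\inl(p)$.
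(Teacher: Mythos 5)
Your proposal is correct and follows essentially the same route as the paper: both take $A$ and $B$ to be the even copy of $\bN$ with the potential point $\inl(\cdot)$ adjoined to one of them, witness the reductions by parity-preserving shift injections defined uniformly in $p$, and then use $\LPO$ to decide whether the Myhill bijection $h$ ever sends an $\inr(n)$ to a left element, concluding $p$ in the positive case and $\neg p$ in the negative case via a bijectivity contradiction (you argue through surjectivity, the paper through $h^{-1}(\inl(0)) \in A$ --- an immaterial difference). Incidentally, your $g$ is verifiably a reduction, whereas the paper's printed clause $g(\inl(0)) = \inl(0)$ fails $B = g^{-1}(A)$ since $\inl(0) \in B$ but $\inl(0) \notin A$ (evidently a typo for something like $g(\inl(0)) = \inr(0)$), so your construction is if anything the more careful one.
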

\begin{proof}
Take $A = \{\inr(2n) \mid n \in \bN\}$, $B = \{\inl(0) \mid p\} \cup A$ and
$f, g : p + \bN \to p + \bN$ defined by
\[
\begin{array}{lcl!\qquad lclr}
  f(\inr(k)) &=& \inr(k + 2) & g(\inr(k)) &=& \inr(k+2) \\
  f(\inl(0)) &=& \inr(1) & g(\inl(0)) &=& \inl(0) & \text{(if $p$ holds)} \\
\end{array}
\]
Using the Myhill property, we have a bijection $h : p + \bN \to p + \bN$
with $h(A) = h(B)$. Using $\LPO$, we can decide whether the sequence
$n \mapsto h(\inr(2n))$ ever takes the value $\inl(0)$ (which is equivalent
to deciding whether $\inl(0) \in h(A)$). If we do have
$h(\inr(2n)) = \inl(0)$, we do know that $p$ holds. Otherwise, assuming $p$,
we know that $\inl(0) \in p + \bN$, and we thus should have $h^{-1}(\inl(0)) \in A$,
which contradicts $\inl(0) \not\in h(A)$; hence $\neg p$ holds.
\end{proof}

\begin{lemma}
For any $p \in \Omega$, if $(p \uplus \{1\}) \times \bN$ has the Myhill property and $\LPO$ holds,
then $p \vee \neg p$ holds.
\end{lemma}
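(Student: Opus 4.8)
The plan is to reuse the template of the two preceding lemmas: build subsets $A,B$ into which the truth value $p$ is secretly encoded, use the Myhill property to obtain a bijection that is forced to relate them, and then read off $p$ with $\LPO$. I would think of $(p \uplus \{1\}) \times \bN$, viewing $p \uplus \{1\}$ as the subset $\{0 \mid p\} \cup \{1\}$ of $\{0,1\}$, as consisting of a \emph{main column} of points $(1,n)$ for $n \in \bN$, which is always present, together with a \emph{$p$-column} of points $(0,n)$ which is inhabited if and only if $p$ holds. I would take $A$ to be the entire main column $\{(1,n) \mid n \in \bN\}$ and $B = A \cup \{(0,0) \mid p\}$, so that $B$ exceeds $A$ by exactly the single $p$-column point $(0,0)$, and precisely when $p$ holds.

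Next I would exhibit injections witnessing $A \preceq_1 B$ and $B \preceq_1 A$. For $A \preceq_1 B$ I would let $f$ be the identity on the main column and set $f(0,k) = (0,k+1)$ on the $p$-column; since the image of the $p$-column misses $(0,0)$ and stays out of the main column, one checks $f^{-1}(B) = A$. For $B \preceq_1 A$ I would set $g(1,k) = (1,k+1)$ on the main column (leaving $(1,0)$ free), $g(0,0) = (1,0)$, and $g(0,k) = (0,k-1)$ for $k \ge 1$; then $(0,0)$ is the unique $p$-point mapped into $A$, giving $g^{-1}(A) = A \cup \{(0,0) \mid p\} = B$. Both maps are total, are defined without any case analysis on $p$ (the only branching is the decidable test $k = 0$ in $\bN$), and are injective because the relevant images in the two columns are disjoint. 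The step I expect to require the most care — and the structural reason the argument works — is that, unlike in the coproduct lemma, $B$ is \emph{not} all of $X$ even when $p$ holds: the tail $\{(0,k) \mid k \ge 1\}$ of the $p$-column remains outside $B$, and this is exactly the ``room'' that lets $f$ push the surplus $p$-points out of $B$. This is why taking $A$ to be the whole main column already suffices and no parity trick (as used for $p + \bN$) is needed.

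Finally, from the Myhill property I obtain a bijection $h$ with $h(A) = B$, and I would detect $p$ as follows. The predicate $P(n)$ asserting ``$h(1,n) = (0,0)$'' is decidable: decompose $h(1,n)$ as a pair $(z,m)$, use the disjoint-union structure of $p \uplus \{1\}$ to decide whether $z$ lies in the $p$-summand or is the always-present point $1$, and in the former case test $m = 0$ in $\bN$. Hence $\LPO$ decides $\exists n.\, P(n)$. If some $P(n)$ holds, then $(0,0)$ is a genuine element of $X$, so the $p$-column is inhabited and $p$ holds. If no $P(n)$ holds, then $(0,0) \notin h(A) = B$; but were $p$ to hold we would have $(0,0) \in B$ by construction, a contradiction, so $\neg p$ holds. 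Either way $p \vee \neg p$, as required.
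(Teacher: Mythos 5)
Your proof is correct and takes essentially the same approach as the paper's: you encode $p$ as the single surplus point $(0,0)$ of $B$ over a main-column set $A$, funnel it into the main column via $g(0,0)=(1,0)$, and use $\LPO$ on the decidable predicate $h(1,n)=(0,0)$ to extract $p \vee \neg p$. The only difference is cosmetic: the paper takes $A$ to be the even main-column points with both injections shifting by $2$ there, whereas you take the full main column with $f$ the identity and $g$ a shift by $1$, which works just as well (and, as you note, spares the parity trick).
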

\begin{proof}
Take $A = \{ (1, \inr(2x)) \mid x \in \bN\}$ and $B = A \cup \{(0, 0) \mid p\}$.
Then we have injections 
$f$ and $g$ that witness $A \preceq_1^{(p \uplus \{1\}) \times \Ninfty} B$ and $B \preceq_1^{(p \uplus \{1\}) \times \Ninfty} A$
respectively:
\[
\begin{array}{llcl !\qquad lcl}
& f(1, z) &=& (1,2 + z) &
g(1,z) &=& (1, 2 + z) \\
 \text{and, if $p$ holds, } &
 f(0, x) &=& (0, x + 1) &
 g(0, {0}) &=& (1, {0}) \\
& & & & g(0, x + {1}) &=& (0, x) \\
\end{array}
\]
Now, if $p + \bN$ has the Myhill property, then we have a bijection $h :
(p \uplus \{1\}) \times \bN \to (p \uplus \{1\}) \times \bN$
with $h(A) = B$. 
By $\LPO$, we can decide whether there exists
$x \in \bN$ such that $h(1,x) = (0, {0})$ or not.
If there is, then we know that $p$ must hold.
Otherwise, assuming $p$, we have $(0, {0}) \in B$.
But since $B = h(A)$ and $A \subseteq \{1\} \times \Ninfty$, we then would
have a contradiction. Hence $\neg p$ holds in that case.
\end{proof}

\begin{corollary}
  \label{cor:coproductMyhill}
``$A+B$ or $B+A$ has the Myhill property whenever $A$ and $B$ both have the strong Myhill property''
together with $\LPO$ entails excluded middle.
\end{corollary}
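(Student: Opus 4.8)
The plan is to imitate the proof of the preceding corollary for exponentiation: reduce full excluded middle to the lemma above concerning $p + \bN$ by supplying it with two summands that are already known to have the \emph{strong} Myhill property, so that the assumed stability under coproducts applies. The only work beyond invoking earlier results is checking that these two summands really do satisfy the hypotheses of the stability assumption.

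Concretely, I would fix an arbitrary truth value $p \in \Omega$ and regard it as a subset of $1$, so that $p$ is subfinite. First, $p$ has the strong Myhill property by~\Cref{lem:subfinite} (applied with $n = 1$), and $\bN$ has the strong Myhill property by~\Cref{thm:myhilliso55} (Myhill's isomorphism theorem). The assumed closure of the strong-Myhill sets under binary coproducts then yields that $p + \bN$ has the Myhill property. Feeding this into the lemma on $p + \bN$ together with our standing hypothesis $\LPO$ produces $p \vee \neg p$. Since $p \in \Omega$ was arbitrary, this is exactly full excluded middle.

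I do not expect a genuine obstacle here; the entire mathematical content sits in the earlier lemma, and this corollary is a bookkeeping combination. The two points deserving a moment's care are the following. Since the stability hypothesis is phrased as ``$A + B$ \emph{or} $B + A$'', one should record that the Myhill property is invariant under the canonical isomorphism $A + B \cong B + A$ — transport the injections and the witnessing bijection across the isomorphism — so that $p + \bN$ can be produced in whichever orientation the stability assumption is stated. And one should note that identifying $p \in \Omega$ with a subset of the finite ordinal $1$ is precisely the situation governed by~\Cref{lem:subfinite}, which is what guarantees that \emph{both} summands meet the strong-Myhill precondition required to apply the closure assumption. With these observations in place the argument is immediate.
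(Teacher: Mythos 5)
Your proposal is correct and matches the paper's intended argument exactly: the paper states this corollary without an explicit proof, but (as with the analogous exponentiation corollary, whose proof reads ``combine the previous lemma with~\Cref{lem:subfinite} and Myhill's isomorphism theorem'') the route is precisely yours — $p \subseteq 1$ is subfinite so has the strong Myhill property, $\bN$ has it by \Cref{thm:myhilliso55}, and the stability hypothesis plus the $p + \bN$ lemma plus $\LPO$ yields $p \vee \neg p$. Your added remarks about transporting the Myhill property across $A + B \cong B + A$ and about $p$ fitting the hypotheses of \Cref{lem:subfinite} are exactly the right bookkeeping points and fill in what the paper leaves tacit.
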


\section{Weakening the Myhill property and one positive result}
\label{sec:positive}

\subsection{The modal Myhill property}

For the rest of this paper, we will discuss
a weakening of the Myhill property, as well as a corresponding weakening of
the strong Myhill property. The basic idea is that, while we are still going to
ask explicitly for a bijection, we will relax our requirement on its correctness
so that we may no longer extract evidence telling us explicitly
which paths to follow in graphs such as those pictured
in Figures~\ref{fig:myhilliso55} and~\ref{fig:NinftyMPLPO}.

To do this, we are going to use some modalities, taking some inspiration
from~\cite{RSSmod,Kihara24rethinking,Swan24}.

\begin{definition}
A modality is a map $\modal : \Omega \to \Omega$ such that, for
every proposition $p \in \Omega$, $p \Rightarrow \modal p$.
We call a proposition $p$ $\modal$-stable if the converse
$\modal p \Rightarrow p$ holds and, for any set $X$, we say that $A \subseteq X$
is a $\modal$-subset of $X$ if, for every $x \in X$, $x \in A$ is $\modal$-modal.

Given two modalities $\modal, \modal' \in \Omega$, we say that $\modal \le \modal'$
if for every $p \in \Omega$, $\modal p$ implies $\modal' p$.
\end{definition}

The double-negation map $\dneg : \Omega \to \Omega$ is the main modality we are
going to use. In realizability models, double-negation erases the computational
content of propositions (that is, double-negated statement can always be trivially
witnessed). The other one, which we will use 
to make a couple positive results a bit sharper, is defined as follows.

\begin{definition}[The mindchange modality]
For any proposition $p \in \Omega$, define $\modal_{\LPO} p$ as
\[ \exists x \in \Ninfty. \; \left(x \in \underline{\bN} \vee x = \infty\right) \Rightarrow p\]
\end{definition}
The basic intuition is that proving $\modal_{\LPO} p$ is the same as proving
$p$ after asking an $\LPO$ question. The witness $x \in \Ninfty$ indicates
that we want to ask whether $x = \underline{t}$ for some $t$ or whether $x =\infty$
before we can prove $t$.

\begin{lemma}
  \label{lem:LPOltdneg}
  We have $\modal_{\LPO} \le \neg\neg$.
\end{lemma}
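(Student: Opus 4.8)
The plan is to unfold the definition of $\le$ on modalities: the statement $\modal_{\LPO} \le \neg\neg$ means exactly that for every $p \in \Omega$ we have $\modal_{\LPO} p \Rightarrow \neg\neg p$. So I would fix $p$ and assume $\modal_{\LPO} p$, which by definition hands us some $x \in \Ninfty$ together with an implication $(x \in \underline{\bN} \vee x = \infty) \Rightarrow p$. Abbreviating the disjunction $x \in \underline{\bN} \vee x = \infty$ as $D(x)$, the whole lemma will reduce to the key intermediate fact that $\neg\neg D(x)$ holds for \emph{every} $x \in \Ninfty$; intuitively, the $\LPO$ question about a fixed conatural number always admits a double-negated answer, even though it cannot in general be decided outright.

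For the main deduction I would argue purely propositionally. To establish $\neg\neg p$, assume $\neg p$. Composing with the hypothesis $D(x) \Rightarrow p$ yields $\neg D(x)$ by contraposition. Combined with the fact $\neg\neg D(x)$, this produces a contradiction, so $\neg\neg p$ holds. Hence all the content of the lemma is concentrated in verifying the claim $\neg\neg D(x)$.

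To prove $\neg\neg D(x)$, I would assume $\neg D(x)$ and derive absurdity. For each $n \in \bN$ the bit $x_n \in 2$ is decidable; if $x_n = 1$, then since every element of $\Ninfty$ carries at most one $1$ we would have $x = \underline{n}$, hence $x \in \underline{\bN}$ and so $D(x)$, contradicting $\neg D(x)$. Thus that case is impossible and we conclude $x_n = 0$ for each $n$, i.e. $x = 0^\omega = \infty$. But then $D(x)$ holds via its right disjunct, again contradicting $\neg D(x)$. This establishes $\neg\neg D(x)$ and completes the proof.

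The argument is entirely elementary, and the step most worth spelling out is precisely that $\neg\neg D(x)$ is available \emph{without} any appeal to $\MP$ or $\LPO$: we never assert that $x$ genuinely \emph{is} a finite numeral or $\infty$, only that assuming it is neither leads to absurdity, using nothing more than decidability of individual bits and the defining ``at most one $1$'' constraint on $\Ninfty$.
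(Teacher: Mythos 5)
Your proof is correct and is essentially the paper's own argument, merely repackaged: the paper also assumes $\neg p$, contraposes to get $x \notin \underline{\bN}$, shows $x = \infty$ (by induction on $x \ge \underline{n}$, which is the same content as your bitwise check $x_n = 0$), and derives the contradiction. Isolating the intermediate claim $\neg\neg\left(x \in \underline{\bN} \vee x = \infty\right)$ is a tidy but cosmetic restructuring of the same reasoning.
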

\begin{proof}
Assume that $\neg p$ holds and $\modal_{\LPO} p$, that is, we have some $x \in \Ninfty$
and $x \in \underline{\bN} \cup \{\infty\}$ implies $p$.
Then we clearly have $x \not\in \underline{\bN}$, so by induction we can
show that $x \ge \underline{n}$ for all $n \in \bN$, and thus that $x = \infty$.
But this is also a contradiction.
\end{proof}

\begin{remark}
\label{rem:modWei}
The nomenclature $\modal_{\LPO}$ is chosen because $\modal_{\LPO}$ is an instance of the following
construction: to a map $r : S \to Q$, one may assign the following endomap
on truth values:
\[
\begin{array}{llcl}
  \modal_r :& \Omega &\longto&\Omega\\
            &  p &\mapsto& \exists q \in Q. \; \forall s \in S. \; r(s) = q ~ \Longrightarrow ~p
\end{array}
\]
Regarding $r$ as a problem with inputs $q \in Q$ and solutions $r^{-1}(q)$, proving
$\modal_r p$ means ``we can prove $p$ provided we can solve one instance of $F$''.
In particular ``$r$ has a section'' proves that $\modal_r$ is the identity over $\Omega$.

The obvious map $\bN + \{ \infty\} \to \Ninfty$ yields $\modal_\LPO$, and this
map having a section is equivalent to $\LPO$.
\end{remark}

Note that, unlike $\dneg$, $\modal_{\LPO}$ is \emph{not} idempotent: to prove
$\modal_{\LPO}\modal_{\LPO} p$, one is allowed to ask two $\LPO$ questions
in succession.
To get a closure operators, one would need to allow an arbitrary finite number
of mindchanges, resulting in a modality better named $\modal_{\LPO^\diamond}$ or
$\modal_{\mathsf{C}_\bN}$ (see~\cite[Proposition 10]{NeumannP18}). Let us
define it now.

\begin{definition}[The finite mindchange modality]
Define the modality $\modal_{\LPO^\diamond}$ as the map which takes $p \in \Omega$ to
\[ \exists s : \bN \to \bN. \;
  \text{$s$ is monotone, $\neg\left( \lim_{n \to +\infty} s(n) = +\infty\right)$ and }
\left(\forall \ell \in \bN. \; \lim_{n \to +\infty} s(n) = \ell ~ \Longrightarrow ~ p\right)\]
\end{definition}

We now define the weakening of the (strong) Myhill property. In what follows,
$\modal$ is an arbitrary modality.

\begin{definition}
We say that a set $X$ has the $\modal$ strong Myhill property when, for
every pair of injections $f, g : X \to X$, there is a bijection
  $h : X \to X$ such that, for every $x \in X$,
$\modal\left(h(x) \in \bigcup_{m \in \bZ} f \circ (g \circ f)^{m}\right)$ holds.

We say that $X$ has the $\modal$ Myhill property if, for any $\modal$-subsets
$A, B \subseteq X$, we have that $A \preceq_1^X B$ and $B \preceq_1^X A$ imply
that $A \simeq^X B$.
\end{definition}

By unpacking the definitions and~\Cref{lem:LPOltdneg} one can
establish the following.

\begin{proposition}
Let $\modal \le \modal'$ be two modalities and $X$ be a set. The following holds:
\begin{itemize}
\item If $X$ has the $\modal$ (strong) Myhill property, then it also has the
    $\modal'$ (strong) Myhill property.
\item If $X$ has the $\modal$ strong Myhill property, then it has the $\modal$ Myhill property
\end{itemize}
\end{proposition}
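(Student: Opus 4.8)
The plan is to treat the two bullet points separately: the first is essentially bookkeeping, while the second rests on one genuine lemma asserting that membership in $A,B$ is invariant along the alternating paths generated by $f$ and $g$.

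For the first bullet, I would handle the strong version directly: given injections $f, g : X \to X$, the $\modal$ strong Myhill property hands us a bijection $h$ with $\modal\bigl(h(x) \in \bigcup_{m \in \bZ} f \circ (g \circ f)^m\bigr)$ for every $x \in X$, and since $\modal \le \modal'$ means $\modal p \Rightarrow \modal' p$ for all $p$, the very same $h$ witnesses the $\modal'$ strong Myhill property. For the plain version I would first note that $\modal \le \modal'$ makes every $\modal'$-stable proposition $\modal$-stable: if $\modal' p \Rightarrow p$ then $\modal p \Rightarrow \modal' p \Rightarrow p$. Hence every $\modal'$-subset of $X$ is a $\modal$-subset, and because the conclusion $A \simeq^X B$ of the Myhill property mentions no modality, the $\modal$ Myhill property (which quantifies over the larger class of $\modal$-subsets) immediately specializes to the $\modal'$ Myhill property.

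For the second bullet, suppose $X$ has the $\modal$ strong Myhill property and fix $\modal$-subsets $A, B \subseteq X$ together with injections $f, g : X \to X$ witnessing $A \preceq_1^X B$ and $B \preceq_1^X A$, i.e. $A = f^{-1}(B)$ and $B = g^{-1}(A)$. Applying the strong property to $f, g$ yields a bijection $h : X \to X$ with $\modal\bigl(\exists m \in \bZ.\ h(x) = (f \circ (g \circ f)^m)(x)\bigr)$ for every $x$. The key lemma I would isolate is that membership is invariant along the graph: if $y = (f \circ (g \circ f)^m)(x)$ for some $m \in \bZ$, then $x \in A \Leftrightarrow y \in B$. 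This follows from the two intertwining equivalences $z \in A \Leftrightarrow f(z) \in B$ and $z \in B \Leftrightarrow g(z) \in A$; composing them gives $(g \circ f)(z) \in A \Leftrightarrow z \in A$, so by induction $(g \circ f)^k$ preserves $A$-membership for $k \ge 0$, which settles the case $m \ge 0$ after one more application of $f$. For $m < 0$ I would read $y = (f \circ (g \circ f)^m)(x)$ as membership in the converse relation, write $y = f(w)$ and $x = (g \circ f)^{-m}(w)$, and then chain $y \in B \Leftrightarrow w \in A \Leftrightarrow x \in A$.

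Finally I would combine these ingredients. To get $x \in A \Rightarrow h(x) \in B$, assume $x \in A$; by the key lemma $\exists m.\ h(x) = (f \circ (g \circ f)^m)(x)$ implies $h(x) \in B$, so monotonicity of $\modal$ turns $\modal\bigl(\exists m.\ \ldots\bigr)$ into $\modal\bigl(h(x) \in B\bigr)$, and $\modal$-stability of $h(x) \in B$ (since $B$ is a $\modal$-subset) yields $h(x) \in B$; the converse direction is symmetric using that $A$ is a $\modal$-subset. Hence $h$ restricts to a bijection between $A$ and $B$, giving $A \simeq^X B$. I expect the main obstacle to be the negative-$m$ bookkeeping in the key lemma, where $f \circ (g \circ f)^m$ is only a partial map and one must track that $y$ lies in the image of $f$ before chaining the equivalences. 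The only other point to flag is that the concluding step uses monotonicity of $\modal$, which is not literally part of the bare unit law but holds for every modality considered here: both $\dneg$ and the modalities $\modal_r$ of \Cref{rem:modWei} have $p$ occurring only positively, hence are monotone.
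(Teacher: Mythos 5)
Your proof is correct and is exactly what the paper intends: the paper gives no written argument here, saying only that the proposition follows ``by unpacking the definitions,'' and your two-part unpacking (the variance computation for the first bullet, and the membership-invariance lemma along $\bigcup_{m \in \bZ} f \circ (g \circ f)^m$ combined with $\modal$-stability of $x \in A$ and $h(x) \in B$ for the second) is the standard such unpacking, including the correct handling of negative $m$ via the converse relation. The one point you flag is a genuine subtlety the paper glosses over: its definition of modality demands only the unit $p \Rightarrow \modal p$, and monotonicity does not follow from the unit alone (e.g.\ $p \mapsto p \vee \neg p$ satisfies the unit but is not monotone intuitionistically), so your observation that every modality actually used in the paper --- $\dneg$, $\modal_{\LPO}$, $\modal_{\LPO^\diamond}$, and the $\modal_r$ of \Cref{rem:modWei} --- is monotone because $p$ occurs only positively is needed to make the second bullet go through, and makes your write-up more careful than the paper's elided proof.
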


To which extent those implications are strict is unclear, but we can note the
following when it comes to the $\neg\neg$ modality for certain sets and models.

\begin{remark}
  \label{rem:modest}
If $X$ is a modest set in a realizability model,
then if it holds that $X$ has the $\neg\neg$ Myhill property, then it necessarily has the
$\neg\neg$ strong Myhill property.
\end{remark}
\begin{proof}
Assume we have a realizer $e$ for $X$ having the $\neg\neg$ Myhill property.
Up to trivial modifications, it takes input evidence for $A$ and $B$ being $\dneg$-stable,
codes for injections $f$ and $g$ together
with evidence that $f$ and $g$ reduce $A$ to $B$ and vice-versa and outputs a
code for a bijection $h$ such that $h(A) = B$.
Consider the families of sets
$(A_{f,g,x})_{f,g,x}$ and $(B_{f,g,x})_{f,g,x}$ indexed by injections $f, g : X \to X$
and $x \in X$ defined by
\[
  \begin{array}{l !\quad c !\quad l}
  z \in A_{f,g,x} &\Longleftrightarrow& \neg\neg z \in \bigcup_{m \in \bZ} (g \circ f)^{m}(x) \\
  y \in B_{f,g,x} &\Longleftrightarrow& \neg\neg y \in \bigcup_{m \in \bZ} \left(f \circ (g \circ f)^{m}\right)(x)
\end{array}
\]
Those sets are clearly $\neg\neg$-stable, and this is realized by a trivial
realizer $\bullet$ that does not depend on $f,g$ and $x$. Similarly, evidence that
$f$ and $g$ are reductions can also be realized by the same realizer $\bullet$.
So overall,
$e' := \lambda f. \lambda g. \; e \; \bullet \; \bullet \; f \; g \; \bullet \; \bullet$
realizes ``if $f, g : X \to X$ are injections reducing $A_{f,g,x}$ to $B_{f,g,x}$
and vice-versa, there is a bijection $h : X \to X$ such that $\dneg h(A_{f,g,x}) = B_{f,g,x}$'',
and it does so without inspecting the code of $x$. Furthermore, because we assume
that $X$ is modest, we know that for any codes $e_f$ and $e_g$ for injections,
$e' \; e_f \; e_g$ determines at most one possible bijection $X \cong X$
that must work for all pairs $A_{f,g,x}$ and $B_{f,g,x}$.
Therefore, it follows that $e'$ realizes that $X$ has the $\dneg$ strong Myhill property.
\end{proof}

This remark essentially applies to most concrete examples we examine in the
sequel, as modest sets are closed under exponentiation, (co)products, and
contain $1$, $\bN$ and $\Ninfty$.

\subsection{The conatural numbers have the $\modal_{\LPO}$ strong Myhill property}

\begin{theorem}
  \label{thm:ninftymyhill}
  Assuming $\MP$, $\mathbb{N}_\infty$ has the $\modal_{\LPO}$ strong Myhill property.
\end{theorem}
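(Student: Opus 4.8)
The plan is to adapt the back-and-forth construction behind \Cref{thm:myhilliso55}, building a single continuous bijection $h : \Ninfty \to \Ninfty$ and then discharging the reduction condition pointwise through one $\LPO$ query, which is exactly what the $\modal_{\LPO}$ modality licenses. The guiding dichotomy, furnished by \Cref{lem:badinjectionLPO} under $\MP$, is that either $f$ and $g$ fix $\infty$ --- in which case their orbits stay among the isolated points $\underline{\bN}$ and the situation is a decidable copy of the $\bN$ case --- or one of them moves $\infty$, in which case $\LPO$ holds and $\Ninfty$ collapses to a discrete countable set on which \Cref{thm:myhilliso55} applies directly. The whole difficulty is that we must commit to one $h$ before knowing which horn we are on, so $h$ has to be built constructively and the dichotomy may only be consulted inside the modality.

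First I would run the $\bN$-style back-and-forth on the isolated points. At stage $k$ one maintains a finite partial isomorphism $I_k$ contained in $\bigcup_{m\in\bZ} f\circ(g\circ f)^m$ and forces $\underline{k}$ into both its domain and codomain. Every comparison the construction needs --- deciding whether a candidate $\left(f\circ(g\circ f)^j\right)(\underline{n})$ already occurs as a recorded image --- is a comparison of an arbitrary element of $\Ninfty$ against an \emph{isolated} point, hence decidable by \Cref{lem:minwoMP}, while $\MP$ turns ``the candidate is not $\infty$'' into ``the candidate lies in $\underline{\bN}$'', and the selection principle of \Cref{thm:Ninftysel} lets us resolve the finite searches over $\Ninfty$ that arise. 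This yields a bijection $h_0$ of $\underline{\bN}$. Since $h_0$ is a bijection, the preimages of $\{\underline{0},\dots,\underline{M}\}$ are finitely many isolated points whose maximum can be read off $I_{M+1}$, giving a computable modulus witnessing that $h_0(\underline{n})$ tends to $\infty$; this is precisely what is needed to extend $h_0$ to a continuous map on all of $\Ninfty$ with $h(\infty)=\infty$, and the same modulus secures injectivity and surjectivity at $\infty$.

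For the reduction condition I would, for each $x\in\Ninfty$, exhibit the modal witness
\[ w_x \;=\; \inf\bigl(x,\,f(\infty)\bigr)\in\Ninfty \]
and verify $\bigl(w_x\in\underline{\bN}\vee w_x=\infty\bigr)\Rightarrow \bigl(h(x)\in\bigcup_{m\in\bZ} f\circ(g\circ f)^m\bigr)$. If $w_x=\infty$ then both $x=\infty$ and $f(\infty)=\infty$, so the pair $(\infty,\infty)=(\infty,h(\infty))$ is realized by $m=0$. If instead $w_x\in\underline{\bN}$ then either $x$ is finite, in which case running the back-and-forth far enough reads off the explicit $m$ recorded in $I_k$, or $f(\infty)$ is finite, in which case \Cref{lem:badinjectionLPO} hands us $\LPO$; under $\LPO$ the set $\Ninfty$ is discrete, every orbit is decidable, and the required $m$ (including for $x=\infty$) can simply be searched for. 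The point is that $\inf\bigl(x,f(\infty)\bigr)$ is infinite exactly in the benign configuration for $x$, so a single mindchange suffices and we land in $\modal_{\LPO}$ rather than merely $\dneg$.

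The main obstacle I anticipate is making $h$ a genuine, total, continuous bijection of $\Ninfty$ \emph{without} ever deciding which regime we are in: the naive forward search can select $\infty$ as an image (a candidate $\left(f\circ(g\circ f)^j\right)(\underline n)$ equal to $\infty$ is flagged as ``free'' by \Cref{lem:minwoMP}, being above every recorded isolated image), and this only ever happens in the $\LPO$ regime, where a different $h$ would be wanted. So the construction must be hardened to never commit an isolated point to $\infty$, and one must check that the continuous extension value $h(\infty)$ coincides with a point actually graph-reachable from $\infty$, so that the $m=0$ path --- respectively the $\LPO$-searched path --- is correct for \emph{our} particular $h$. Controlling this escape-to-infinity, and confirming that the single query packaged in $w_x$ rather than an unbounded sequence of mindchanges really is enough, is where the elaboration of the $\bN$ argument does its work; the decidable comparisons of \Cref{lem:minwoMP}, the selection principle of \Cref{thm:Ninftysel}, and the collapse supplied by \Cref{lem:badinjectionLPO} are the tools I expect to carry it through.
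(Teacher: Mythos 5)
Your overall architecture matches the paper's: run the $\bN$-style back-and-forth optimistically, bail out to an $\LPO$ regime when a witness of discontinuity appears, and package the case split into a single $\modal_{\LPO}$ witness of the form $\inf(x,\,\text{--})$. But there is a genuine gap at exactly the point you flag in your last paragraph and then leave unresolved: you never give a mechanism for deciding, during the construction, whether a candidate value such as $f(\underline{n})$ or $g(\underline{m})$ is itself finite. \Cref{lem:minwoMP} only decides comparisons against \emph{already recorded} isolated points, so it tells you whether the candidate collides with the finite bookkeeping, but not whether the candidate is safe to record; and $\MP$ cannot convert ``the candidate is not $\infty$'' into finiteness because you have no proof that the candidate is not $\infty$ --- an injective $f$ with $f(\infty)=\infty$ can still have, a priori, no decidable finiteness test for $f(\underline{n})$. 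The paper's proof supplies precisely the missing mechanism: in the forth step one computes $\inf(f(\underline{n}), f(\infty))$, which by injectivity and \Cref{lem:MPmin} lies in $\underline{\bN}$ and is decidably attained by one of the two arguments; if it is attained by $f(\infty)$ one bails to the $\LPO$ regime via \Cref{lem:inftyToIsolated}, and otherwise one has certified $f(\underline{n}) \in \underline{\bN}$ and may record it. Correspondingly, the paper's ``rank $n$'' condition for a continuous partial solution includes $\inf(f(\infty), g(\infty)) \ge \underline{n}$, checked afresh at every stage, which is what keeps all candidates isolated throughout the continuous phase. Without this, your back-and-forth can get stuck or silently commit $\infty$ to an isolated point (e.g.\ in a backward step where $g(\underline{m}) = \infty$), and no ``hardening'' by avoidance is available, since the offending equality is undecidable.

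The second, related defect is your modal witness $w_x = \inf\bigl(x, f(\infty)\bigr)$: it ignores $g$. Take $f(\infty) = \infty$ but $g(\infty) = \underline{5}$. Then for finite $x$ your witness is revealed finite without ever granting $\LPO$, yet your fallback --- ``run the back-and-forth far enough and read off $m$ from $I_k$'' --- is exactly what may fail, because the backward steps through $g$ are the ones that hit $\infty$ and force the regime change. The paper's witness is instead $\inf(x, s)$, where $s \in \Ninfty$ is computed from the construction itself so that $s \ge \underline{n}$ iff stage $n$ is still in the continuous phase; thus learning $s$ is finite yields $\LPO$ regardless of whether $f$ or $g$ caused the switch, while $s = \infty$ certifies $f(\infty) = g(\infty) = \infty$ and makes $m = 0$ correct at $x = \infty$. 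Repairing your argument means replacing $f(\infty)$ by this stage-tracking sequence and inserting the $\inf$-against-$f(\infty)$ and $\inf$-against-$g(\infty)$ tests into every forth and back step; with those two changes your outline becomes the paper's proof.
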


For this whole subsection, let us thus fix some injections
$f, g : \mathbb{N}_\infty \to \mathbb{N}_\infty$, from which we want to produce
some bijection $h \subseteq \bigcup_{m \in \bZ} f \circ (g \circ f)^{m}$.

The gist of the proof is that one can attempt to build $h$
over $\mathbb{N}_\infty \setminus \{\infty\}$ using the same back-and-forth
construction as in the classical Myhill isomorphism theorem if we
optimistically assume that $f(\infty) = g(\infty) = \infty$.
Of course this is not something we can simply assume nor check, but we can
check that $\inf(f(\infty), g(\infty))$ is ``large enough'' for a
given stage of the construction to not be wrong. If that quantity is always
large enough, this will mean that $f(\infty) = g(\infty) = \infty$ and the
construction will naturally enforce that $h(\infty) = \infty$.
Otherwise, at some stage we will realize (via~\Cref{lem:MPmin}) that either
$f(\infty) \neq \infty$ or $g(\infty) \neq \infty$. In either case, we
can apply~\Cref{lem:badinjectionLPO} to deduce that
$\Ninfty \cong \bN$, which allows us to continue the back-and-forth as if
$\Ninfty$ was actually $\bN$ much like in the original proof of the Myhill
isomorphism theorem.

Now let us make all of that more precise. First, let us define what are
the approximations we are going to build. There are going to be two kinds,
those that are continuous, corresponding to the first optimistic phase.

\begin{definition}
A \emph{continuous partial solution} is a finite subset $I$ of $\bN \times \bN$
such that:
\begin{itemize}
\item $I$ is a partial isomorphism: for all $n, n', m, m' \in \bN$, we have
\[
(n, m) \in I ~\text{and}~ (n', m') \in I \qquad \text{if and only if} \qquad n = n' \Longleftrightarrow m = m'\]
\item the image of $I$ by $(n,m) \mapsto (\underline{n},\underline{m})$ is included in $\bigcup\limits_{m \in \bZ} f \circ (g \circ f)^{m}$
\end{itemize}
We say $I$ has rank (at least) $n$ if
\begin{itemize}
\item for every $k < n$, $k$ belongs to both the domain and codomain of $I$
\item $\inf(f(\infty), g(\infty)) \ge \underline{n}$.
\end{itemize}
\end{definition}

The second kind corresponds to the phase where we noticed
$\inf(f(\infty), g(\infty)) \neq \infty$ and have $\LPO$ available.

\begin{definition}
A \emph{discontinuous partial solution} is a finite subset $I$ of $\Ninfty \times \Ninfty$
such that:
\begin{itemize}
\item $I$ is a partial isomorphism: for all $n, n', m, m' \in \bN$, we have
\[
(n, m) \in I ~\text{and}~ (n', m') \in I \qquad \text{if and only if} \qquad n = n' \Longleftrightarrow m = m'\]
\item $I \subseteq \bigcup\limits_{m \in \bZ} f \circ (g \circ f)^{m}$
\item $\infty$ belongs to both the domain and codomain of $I$
\item $\LPO$ holds
\end{itemize}
We say $I$ has rank (at least) $n > 0$ if 
\begin{itemize}
\item for every $k < n$, $k$ belongs to both the domain and codomain of $I$
\item $\infty$ belongs to both the domain and codomain of $I$.
\end{itemize}
\end{definition}

We say that a partial solution $I$ extends a partial solution $J$ if we have
either $J \subseteq I$ or that $J$ is continuous, $I$ discontinuous and the
image of $J$ by $(n,m) \mapsto (\underline{n}, \underline{m})$ is included in
$I$. Now the first step towards proving~\Cref{thm:ninftymyhill} is to
establish the following.

\begin{lemma}
  \label{lem:ninftymyhillapprox}
Assuming Markov's principle, there exists a sequences of partial solutions
\[(I_n)_{n \in \bN} \in \left(\powerset(\bN \times \bN) + \powerset(\Ninfty \times \Ninfty)\right)^\bN\]
such that $I_n$ has rank at least $n$ and $I_n$ extends $I_m$ whenever $n \ge m$.
\end{lemma}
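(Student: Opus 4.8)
The plan is to build the sequence by induction on $n$, proving the stronger one-step statement that there is a partial solution $I_{n+1}$ of rank at least $n+1$ extending $I_n$. Since extension is transitive and, once $\LPO$ has been acquired, a discontinuous solution can only be extended by another discontinuous one, assembling these one-step extensions yields the full monotone sequence. For the base case I take $I_0 = \emptyset$ as a continuous partial solution: the partial-isomorphism and graph conditions hold vacuously, and the only rank-$0$ requirement is $\inf(f(\infty), g(\infty)) \ge \underline 0$, which holds because $\underline 0$ is least in $\Ninfty$. For the inductive step I distinguish whether $I_n$ is continuous or discontinuous.

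If $I_n$ is discontinuous, then $\LPO$ is part of its data, so equality on $\Ninfty$ is decidable and $\Ninfty$ behaves like a discrete enumerable set. I then perform exactly one forward-and-backward step of the construction behind \Cref{thm:myhilliso55}: to place $\underline n$ in the domain (if absent) I inspect the partners $f\circ(g\circ f)^k(\underline n)$ for $k = 0, 1, \dots$ and, by the pigeonhole argument against the finite current codomain, select the first fresh one, every membership test being decidable under $\LPO$; symmetrically for the codomain. Since $\infty$ already lies in both the domain and codomain of a discontinuous solution, the extension $I_{n+1}$ has rank at least $n+1$.

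If $I_n$ is continuous, I first decide the regime by applying \Cref{lem:minwoMP} to $\inf(f(\infty), g(\infty))$ and the number $n+1$. Combined with the rank hypothesis $\inf(f(\infty), g(\infty)) \ge \underline n$, the trichotomy collapses to two outcomes: either $\inf(f(\infty), g(\infty)) \ge \underline{n+1}$, or $\inf(f(\infty), g(\infty)) = \underline n$. In the second case the infimum is finite, and applying \Cref{lem:minwoMP} to $f(\infty)$ and to $g(\infty)$ against $\underline{n+1}$ shows that at least one of them fails to be $\ge \underline{n+1}$ and is hence a finite conatural distinct from $\infty$ (this is the content of \Cref{lem:MPmin}); \Cref{lem:badinjectionLPO} then yields $\LPO$. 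Having $\LPO$, I lift $I_n$ along $(n,m)\mapsto(\underline n, \underline m)$ — which keeps it inside the graph and a partial isomorphism — and run the discontinuous step above to insert $\infty$ together with $\underline n$ into both the domain and codomain, producing a discontinuous $I_{n+1}$ of rank at least $n+1$ extending $I_n$.

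The remaining subcase, where $\inf(f(\infty), g(\infty)) \ge \underline{n+1}$ and we keep building a \emph{continuous} solution, is the step I expect to be the main obstacle. Here I must add $\underline n$ to the domain and codomain using only edges between \emph{finite} points, yet for an arbitrary injection $f(\underline n)$ may equal $\infty$ and we cannot decide whether a given orbit point is infinite, so the $\bN$-construction cannot be imitated verbatim. Instead I would dovetail over pairs $(k,m)$ searching for the first finite witness $\underline m = f\circ(g\circ f)^k(\underline n)$ with $m$ outside the current codomain — each such equation being decidable since $\underline m$ is isolated — and likewise along backward orbits for the codomain. The crux is proving this search terminates, and this is exactly where $\MP$ and the rank bound enter: the bound $\inf(f(\infty), g(\infty)) \ge \underline{n+1}$ forces the images of $\infty$ to lie above the finite range currently being filled, so they cannot be captured as a partner of $\underline n$, while $\MP$ converts the knowledge that an orbit point is $\neq \infty$ (which injectivity forces whenever $\infty$ lies on the orbit) into an explicit finite value $\underline m$. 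Making precise that these two facts guarantee a fresh finite partner appears — equivalently, that once the rank bound holds the optimistic hypothesis $f(\infty) = g(\infty) = \infty$ cannot have been silently violated up to the relevant precision — is the heart of the argument and the reason the rank condition is the correct invariant to carry through the induction. Granting it, $I_{n+1}$ is a continuous solution of rank at least $n+1$ containing $I_n$, which closes the induction.
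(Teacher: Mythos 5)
Your skeleton agrees with the paper's: the base case, the discontinuous step via \Cref{thm:myhilliso55}, and the stage check comparing $\inf(f(\infty),g(\infty))$ against $\underline{n+1}$ via \Cref{lem:minwoMP}, with conversion to a discontinuous solution once $\LPO$ is acquired, are exactly how the paper organizes things. But the step you defer (``granting it'') is the entire content of the lemma --- it is the paper's \Cref{lem:forthCont} --- and the sketch you give for it is not merely incomplete but fails on concrete instances. Consider injections with $f(\underline{5}) = \infty$, $g(\infty) = \underline{100}$, $f(\underline{100}) = \underline{2}$, $g(\underline{2}) = \underline{5}$, $f(\infty) = \underline{1000}$, extended injectively elsewhere. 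The stage checks pass for all ranks up to $100$, since $\inf(f(\infty), g(\infty)) = \underline{100}$. At the stage placing $2$ in the codomain, your backward search finds the fresh partner $f^{-1}(\underline{2}) = \underline{100}$ and adds $(100,2)$. But the only graph-partners of $\underline{5}$ are $\infty$ and $\underline{2}$ (its orbit is the $2$-cycle $\underline{5} \mapsto_f \infty \mapsto_g \underline{100} \mapsto_f \underline{2} \mapsto_g \underline{5}$), so when $5$ must enter the domain your dovetail over $(k,m)$ never finds a fresh finite witness and diverges --- indeed no continuous extension exists at all at that point. Your stated justification is precisely where this goes wrong: the rank bound constrains the \emph{images} of $\infty$, but nothing prevents $\infty$ itself from being the image of a small point ($f(\underline{5}) = \infty$ here), which poisons the forward orbit without any stage check ever failing.

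What is needed, and what \Cref{lem:forthCont} supplies, is an escape hatch \emph{inside} the chase rather than a blind search: at each step one computes $\inf(f(\underline{n}), f(\infty))$ (and dually $\inf(g(\underline{m}), g(\infty))$). By injectivity the two arguments differ, so by $\MP$ and \Cref{lem:MPmin} this infimum is finite and attained by one of them: either it is $f(\underline{n})$, yielding the next explicit finite orbit value to test against the codomain of $I$, or it is $f(\infty)$, revealing $f(\infty) \in \underline{\bN}$, whence $\LPO$ by \Cref{lem:inftyToIsolated} and the dummy-value exit to the discontinuous regime --- which is exactly what must happen on the instance above ($\inf(f(\underline{5}), f(\infty)) = \underline{1000} = f(\infty)$). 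Termination of the chase is then secured by a decreasing measure (the paper removes the traversed pair of $I$ at each recursive call), not by the existence of a finite fresh witness, which as the example shows may simply fail to exist. So the gap cannot be closed by the argument you gesture at; the per-step comparison with $f(\infty)$ and $g(\infty)$ is the actual heart of the proof.
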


To prove this, it suffices to show that any partial solution of rank $n$
can be extended to a solution of rank $n+1$. For discontinuous partial solutions,
this is easy.

\begin{lemma}
  \label{lem:discontinuouspartialsolutionext}
There is a function that turns partial discontinuous solution of rank $n$ into
partial discontinuous solution of rank $n + 1$.
\end{lemma}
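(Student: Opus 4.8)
The plan is to exploit that every discontinuous partial solution comes with $\LPO$ for free, which collapses $\Ninfty$ to a discrete enumerable set and lets me replay the forward/backward step of \Cref{thm:myhilliso55} almost verbatim. First I would record the consequences of $\LPO$ that I will use: equality on $\Ninfty$ is decidable, every point is decidably either some $\underline{n}$ or $\infty$, the reduction relation $R := \bigcup_{m\in\bZ} f\circ(g\circ f)^m$ is decidable with computable forward and backward orbits, and membership in the finite sets $\dom(I)$ and $\codom(I)$ is decidable. Since a rank-$n$ solution already carries $\infty$ and every $\underline{k}$ with $k<n$ in both its domain and codomain, I only need to adjoin $\underline{n}$ to the domain and to the codomain while deleting nothing, so that $\infty$ stays covered and $\LPO$ keeps holding.

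Next I would carry out two symmetric steps. In the forward step, I decide whether $\underline{n}\in\dom(I)$; if not, I scan the forward orbit $y_k := (f\circ(g\circ f)^k)(\underline{n})$ and take the least $k$ with $y_k\notin\codom(I)$, adjoining $(\underline{n},y_k)$. The backward step is the mirror image, searching the points $x$ with $(x,\underline{n})\in R$ (following $R$ in the reverse direction) for the least one outside $\dom(I)$, and adjoining the corresponding pair. Freshness of the added endpoints keeps $I$ a partial isomorphism, the choice of $y_k$ keeps $I\subseteq R$, and since nothing is removed, $\infty$ remains in both domain and codomain and $\LPO$ still holds; the two ``least $k$'' searches make the whole assignment $I\mapsto I'$ deterministic, hence a genuine function.

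The one step that needs real justification --- and which I expect to be the main obstacle --- is the existence of a free partner in each orbit, i.e. that some $y_k$ escapes $\codom(I)$. This is exactly the point where \Cref{thm:myhilliso55} does its work, and I would argue it the same way: $R$ partitions $\Ninfty$ into finite cycles and (bi)infinite paths, and $I$ is a partial matching inside $R$; if all forward candidates $y_k$ lay in $\codom(I)$ then the component of $\underline{n}$ would be a finite cycle with every codomain-vertex matched, which pairs up the whole cycle and forces $\underline{n}\in\dom(I)$, a contradiction. One mild wrinkle to flag is that $\infty$ may itself appear among the $y_k$; but then it is simply skipped because $\infty\in\codom(I)$ already, so the new pair never threatens the coverage of $\infty$. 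Everything else is bookkeeping enabled by $\LPO$, and I would close by checking that $I'$ satisfies all four clauses defining a discontinuous partial solution and has rank at least $n+1$.
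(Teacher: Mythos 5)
Your proposal is correct and takes essentially the same route as the paper, whose proof is a two-liner: a discontinuous partial solution carries $\LPO$ by definition, so one transports the problem along $\Ninfty \cong \bN$ and extends exactly as in the proof of \Cref{thm:myhilliso55}. The details you add (decidability bookkeeping, the finite-cycle/free-partner argument, skipping $\infty$ since it already lies in $\codom(I)$) are just an honest unpacking of that same back-and-forth step.
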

\begin{proof}
Since we are given a partial discontinuous solution, we know that $\LPO$ holds.
Thus we can simply use the isomorphism $\Ninfty \cong \bN$ to extend the solution
as in the proof of~\Cref{thm:myhilliso55}.
\end{proof}

The more difficult part is thus to extend partial continuous solutions. In the
presence of $\LPO$, we can straightforwardly use~\Cref{lem:discontinuouspartialsolutionext}
since we can then turn a partial continuous solution into a discontinuous one.

\begin{lemma}
There is a function that turns partial continuous solutions $I$ of rank $n$ into
a compatible partial discontinuous solutions $I'$ of rank $n$ if $\LPO$ holds.
\end{lemma}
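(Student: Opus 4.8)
The plan is to exploit that, \emph{under $\LPO$}, the only difference between a continuous partial solution of rank $n$ and a discontinuous one of rank $n$ is that the latter must additionally carry $\infty$ in both its domain and its codomain. So given a continuous partial solution $I$ of rank $n$, I would first embed it into $\Ninfty \times \Ninfty$ as $\underline{I} := \{(\underline{k}, \underline{\ell}) \mid (k,\ell) \in I\}$ and then enlarge $\underline{I}$ by at most two further pairs that put $\infty$ into the domain and the codomain. The first thing to record is that $\LPO$ makes $\Ninfty$ discrete, hence $\Ninfty \cong \bN$; consequently equality and membership in finite subsets of $\Ninfty$ are decidable, and for any $x \in \Ninfty$ and $m \in \bZ$ the element $(f \circ (g \circ f)^m)(x)$ (when defined) can be computed and compared, so the back-and-forth bookkeeping of~\Cref{thm:myhilliso55} can be run verbatim on $\Ninfty$.

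By the definition of a continuous partial solution, $\underline{I}$ is a partial isomorphism whose graph is contained in $\bigcup_{m \in \bZ} f \circ (g \circ f)^{m}$; injectivity of $k \mapsto \underline{k}$ guarantees it remains a partial isomorphism, and it already places $\underline{k}$ in both domain and codomain for every $k < n$. It therefore remains only to adjoin $\infty$. To put $\infty$ in the domain I would take the least $k \ge 0$ such that $(f \circ (g\circ f)^k)(\infty)$ does not already lie in the codomain of $\underline{I}$ and adjoin the pair $\bigl(\infty,\ (f \circ (g\circ f)^k)(\infty)\bigr)$; then, unless $\infty$ is already in the codomain (which happens exactly when this forward pair is $(\infty,\infty)$), I would symmetrically take the least $j \ge 0$ such that $(g \circ (f \circ g)^j)(\infty)$ does not lie in the \emph{updated} domain and adjoin $\bigl((g \circ (f \circ g)^j)(\infty),\ \infty\bigr)$. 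Both adjoined pairs lie in the graph, via the branch indexed by $m = k$ (forward) and $m = -(j+1)$ (backward) respectively, and the freshness conditions ensure the result $I'$ is still a partial isomorphism. Equivalently, one may transport $f$, $g$ and $\underline{I}$ across $\Ninfty \cong \bN$ and apply exactly one forward and one backward step of the construction of~\Cref{thm:myhilliso55} to the point corresponding to $\infty$.

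It then remains to check that $I'$ is a discontinuous partial solution of rank $n$: it is a partial isomorphism contained in the graph, it contains $\underline{I}$ (so it is compatible with $I$, i.e. $I'$ extends $I$ in the sense of the definition of \emph{extends}), it has $\infty$ and each $\underline{k}$ for $k<n$ in both domain and codomain, and $\LPO$ holds by hypothesis. The only genuine point requiring care — and the step I expect to be the main obstacle — is that the two searches terminate, i.e. that a free forward image and a free backward preimage of $\infty$ exist. But this is exactly the situation resolved by the single forward/backward step in~\Cref{thm:myhilliso55}: transporting across $\Ninfty \cong \bN$, the point corresponding to $\infty$ is just an ordinary element to be inserted into a finite partial isomorphism, and the argument there (using injectivity of $f, g$ and finiteness of the current approximation) already guarantees that the relevant minimal indices exist and are computable. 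Performing the codomain search against the domain \emph{after} the forward pair has been adjoined guarantees the two new pairs cannot conflict, so $I'$ satisfies every clause of being a discontinuous partial solution of rank $n$ compatible with $I$.
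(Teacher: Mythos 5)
Your proposal is correct and matches the paper's own proof in essence: embed $I$ into $\Ninfty \times \Ninfty$ via $(k,\ell) \mapsto (\underline{k}, \underline{\ell})$, use that $\LPO$ gives $\Ninfty \cong \bN$ (hence decidable equality), and adjoin $\infty$ to domain and codomain by one forward and one backward step of the back-and-forth from~\Cref{thm:myhilliso55}. You merely spell out the two insertion steps and the termination/freshness bookkeeping that the paper leaves implicit by citing that theorem.
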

\begin{proof}
First take the image of $I$ under $(n,m) \mapsto (\underline{n}, \underline{m})$;
that is almost a partial discontinuous solution, except for the fact that $\infty$
is not in the domain or codomain. Since we have $\LPO$, we can again use
the isomorphism $\Ninfty \cong \bN$ to make equality on $\Ninfty$ decidable and
add $\infty$ to the domain and codomain as in the proof of~\Cref{thm:myhilliso55}.
\end{proof}

With that in hand, it thus suffices to show the existence of a procedure that
takes a partial continuous solution $I_n$ of rank $n$ and returns either
\begin{itemize}
  \item a partial continuous solution of rank $n+1$
  \item or a distinct dummy value with the guarantee that $\LPO$ holds
\end{itemize}
This is done in two steps.
First, we check whether
$\inf(f(\infty),g(\infty)) \ge \underline{n + 1}$, a precondition to an extension
existing in the first place; if it's not, we return the dummy value.
This can be done because, by~\Cref{lem:MPmin} we can decide whether $f(\infty) \le \underline{n+1}$
or $\underline{n+1} \ge f(\infty)$, and similarly for $g$. If we are forced
to return the dummy value, we know that $f(\infty) \in \underline{\bN}$ or
$g(\infty) \in \underline{\bN}$, and we can deduce $\LPO$ from~\Cref{lem:inftyToIsolated}.
We can then proceed with the second step, which corresponds to a back-and-forth
similar to what is going on in the proof of~\Cref{thm:myhilliso55}: first we attempt
to add $n$ to the domain of $I$ if necessary, and then the codomain in a similar
manner. Since both steps are symmetrical, we focus on proving only the following
statement.

\begin{lemma}
\label{lem:forthCont}
There is a function $\Phi$ taking as input a partial continuous
solution $I$ and $n \in \bN$ such that:
\begin{itemize}
\item either $\Phi(I)$ is defined and returns some $m \in \bN$ such that
$I \cup (n, m)$ is a partial continuous solution
\item or a dummy value with the guarantee that $\LPO$ holds.
\end{itemize}
\end{lemma}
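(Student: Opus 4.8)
The plan is to reduce the entire forth-step to a single auxiliary operation: given a finite point $\underline a \in \underline{\bN}$ (with $a$ known) and one of the two injections, say $f$, compute $f(\underline a)$ \emph{as a finite point} --- that is, produce $b \in \bN$ with $f(\underline a) = \underline b$ --- or else certify that $\LPO$ holds. Granting this operation, the forth-step proceeds exactly as in the classical back-and-forth of \Cref{thm:myhilliso55}: if $n$ already lies in the domain of $I$ (decidable, since the domain is a finite subset of $\bN$) we return its partner; otherwise we walk the forward path, setting $c_0 = f(\underline n)$ and $c_k = f(g(c_{k-1}))$, so that $c_k = f \circ (g \circ f)^k(\underline n)$, using the auxiliary operation (first with $g$, then with $f$) to produce the successive finite values $\underline{m_k}$. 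We then output the first $m_k$ that does not lie in the (finite, hence decidable) codomain of $I$. If at any point the auxiliary operation certifies $\LPO$, we return the dummy value instead.

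The auxiliary operation is where $\MP$ and the infimum lemmas are used. Since $\underline a \neq \infty$ and $f$ is injective, $f(\underline a) \neq f(\infty)$, so \Cref{lem:MPmin} applies and yields $\inf(f(\underline a), f(\infty)) \in \underline{\bN}$; let $\underline j$ be this infimum, with $j \in \bN$ extracted from its position. As $\underline j \le f(\underline a)$, applying \Cref{lem:minwoMP} to $f(\underline a)$ and $j$ leaves only two possibilities: either $f(\underline a) = \underline j$, in which case $f(\underline a)$ is the finite point $\underline j$ and we return $j$; or $f(\underline a) > \underline j$, in which case the infimum cannot equal $f(\underline a)$, so by the dichotomy in \Cref{lem:MPmin} it equals $f(\infty)$, giving $f(\infty) = \underline j \neq \infty$ and hence $\LPO$ by \Cref{lem:badinjectionLPO}. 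The same argument with $g$ in place of $f$ handles the intermediate backward-style applications.

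It remains to check that the output is a partial continuous solution and that the search terminates. Correctness is immediate: each candidate satisfies $\underline{m_k} = f \circ (g \circ f)^k(\underline n)$ by construction, so the adjoined pair lies in $\bigcup_{m \in \bZ} f \circ (g \circ f)^m$; and since $m_k$ is outside the codomain of $I$ while $n$ is outside its domain, adjoining $(n, m_k)$ preserves the partial-isomorphism condition. Termination of the search, in the branch where $\LPO$ is never certified, follows from the same counting argument as in \Cref{thm:myhilliso55}: the codomain of $I$ is finite of size $|I|$, and the standard combinatorial fact about finite partial isomorphisms in such a graph guarantees that one of the first $|I| + 1$ candidates $m_0, \ldots, m_{|I|}$ escapes it.

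The main obstacle is precisely the auxiliary operation. Membership of $f(\underline a)$ in $\underline{\bN}$ is not decidable on its own --- this is exactly the failure of $\Ninfty$ to be discrete --- and the key idea is to trade this undecidable question for the decidable dichotomy ``$f(\underline a)$ is finite, \emph{or} $f(\infty)$ is finite'' by comparing $f(\underline a)$ with $f(\infty)$ through their infimum. In the second horn, a finite value of $f(\infty)$ is exactly the hypothesis that powers $\LPO$ via \Cref{lem:badinjectionLPO}, so the escape into the discontinuous regime is produced automatically by the very obstruction that would otherwise block the continuous construction. Everything else is a faithful transcription of the classical construction.
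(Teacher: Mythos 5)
Your proof is correct and follows essentially the same route as the paper: the paper's $\Phi$ likewise computes $\inf(f(\underline{n}), f(\infty))$ (and then $\inf(g(\underline{m}), g(\infty))$ at intermediate steps), invokes \Cref{lem:MPmin} together with injectivity to obtain exactly your finite-value-or-$\LPO$ dichotomy (certifying $\LPO$ via \Cref{lem:inftyToIsolated}, which is what your appeal to \Cref{lem:badinjectionLPO} amounts to), and then walks the same $f, (g \circ f)$-path. The only divergence is bookkeeping: you test each candidate against the codomain of the full $I$ and terminate by the counting argument of \Cref{thm:myhilliso55}, whereas the paper recurses on $\Phi(I \setminus \{(n',m)\}, n')$ and terminates because $I$ shrinks --- two phrasings of the same back-and-forth step.
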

\begin{proof}
To compute $\Phi(I, n)$, first we compute
$\inf(f(\underline{n}), f(\infty))$. By~\Cref{lem:MPmin} and the injectivity
of $f$, we know this value is in $\underline{\bN} \cong \bN$ and thus that there
is $m \in \bN$ such that
\[ \text{either } \quad f(\underline{n}) = \underline{m} \quad \text{ or } \quad f(\infty) = \underline{m}\]
In the latter case, we let $\Phi$ return the dummy value as we know $\LPO$ holds by~\Cref{lem:inftyToIsolated}.
Otherwise, we check if $m$ is in the codomain of $I$.
If it isn't, $\Phi(I, n) = m$. If it is, we then compute $\inf(g(\underline{m}), g(\infty))$
and similarly have some $n'$ such that
\[ \text{either } \quad g(\underline{m}) = \underline{n'} \quad \text{ or } \quad g(\infty) = \underline{n'}\]
Similarly as before, we can assume without loss of generality we are in the first case, as otherwise $\LPO$
holds. Then we recursively compute $\Phi(I \setminus \{(n',m)\},n')$. This process will always halt
as the size of $I$ decreases at each step. Since we follow $f$ and $g$, $I \cup (n, \Phi(I,n))$
is still a partial continuous solution.
\end{proof}

This statement, together with the previous remarks, thus concludes the proof
of~\Cref{lem:discontinuouspartialsolutionext}. All that is left is thus
to argue that this allows us to build a suitable bijection.

\begin{proof}[Proof of~\Cref{thm:ninftymyhill} from~\Cref{lem:discontinuouspartialsolutionext}]
Assume a suitable sequence of partial solutions $(I_n)_{n \in \mathbb{N}}$
and recall we defined $\Ninfty \subseteq \Cantor$ as the set of sequence with
a single $1$. For $x \in \Ninfty$ and $n \in \bN$, define $h(x)_n \in 2$ by cases:
\begin{itemize}
\item If $I_{n+1}$ is continuous and $x \le \underline{n}$, then $x = \underline{m}$ for some $m \le n$.
Since $I_{n+1}$ has rank $n+1$, there is some $k \in \bN$ such that $(m, k) \in I_{n+1}$; set $h(x)_n = \underline{k}_n$.
\item If $I_{n+1}$ is continuous and $\underline{n} < x$, then set $h(x)_n = 0$.
\item If $I_{n+1}$ is discontinuous, then $\LPO$ holds and we can check if $x$ is in the domain of $I_{n+1}$. If it
is and $(x, y) \in I_{n+1}$, we set $h(x)_n = y_n$, otherwise we set $h(x)_n = 0$.
\end{itemize}
It is relatively straightforward to check that this definition of $h$ is total and
that its inverse can be defined in a similar way.
Now we need to check that we have, for every $x \in \Ninfty$ that 
\[\modal_{\LPO}\left(\exists m \in \bZ. \; h(x) = \left(f \circ (g \circ f)^{m}\right)(x)\right)\]
To this end, fix such an $x \in \Ninfty$ and recall that the proposition above
unfolds to
\[\exists w \in \Ninfty. \; \left(w \in \underline{\bN} \vee w = \infty\right) \Rightarrow 
\exists m \in \bZ. \; h(x) = \left(f \circ (g \circ f)^{m}\right)(x)\]
Note we can compute an element $s \in \Ninfty$
such that $s \ge \underline{n}$ if and only if $I_n$ is continuous.
To witness the existential statement, we take $w = \inf(x, s)$.

Intuitively, we
first start with the belief that $x = s = \infty$, which entails $f(\infty) = g(\infty) = \infty$.
In such a case ($w = \infty$), we can take $m = 0$ and we know we are in the right.
Otherwise, we may possibly notice that there is $n \in \bN$ such that $w = \underline{n}$.
By~\Cref{lem:minwoMP}, this means that either $s = \underline{n}$ or $x = \underline{n}$,
two cases we may treat separately:
\begin{itemize}
  \item if $s = \underline{n}$, it means $\LPO$ holds. In such a case, we may
    decide whether $x \in \underline{\bN}$ or not.
    \begin{itemize}
      \item if $x = \infty$, then we may examine $I_n$ and replay the
        construction of $I_n$ to compute $m$.
      \item if $x = \underline{k}$ for some $k \in \bN$, then one may similarly
        examine the construction of $I_{k + 1}$ to compute $m$.
    \end{itemize}
  \item if $x = \underline{n}$, then it suffices to replay the construction of
    $I_{n+1}$ to compute $m$.
\end{itemize}
\end{proof}

\section{Negative results for the $\neg\neg$ Myhill property}
\label{sec:negnegative}

\subsection{Concrete examples for which the $\neg\neg$ Myhill property is $\LPO$-hard}

We now turn to arguing that the classes of sets with the Myhill property cannot
be closed under products, coproducts or exponentiations. We first give some
concrete sets for which having the $\dneg$ strong Myhill property implies $\LPO$. This
implies that it cannot be the case that sets with the Myhill property can be
closed under (co)products or exponentiation in constructive logic.

When using the assumption that some set has the $\dneg$ Myhill property, we will 
need to exhibit particular $\dneg$-stable subsets. To establish that the
subsets we want to use are $\dneg$-stable, we will always be able to check
that they are effectively closed, by which we mean they are inverse images of $\infty$
by some map\footnote{This matches the definition of synthetic topology provided we take Sierpi{\'n}ski space to be the coequalizer of the 
maps $\id, \Succinfty : \Ninfty \to \Ninfty$, as is typically done in computable analysis.}.

\begin{lemma}
For any set $X$ and $\chi : X \to \Ninfty$,
for any $x \in X$, $\chi^{-1}(\infty)$ is $\dneg$-stable.
\end{lemma}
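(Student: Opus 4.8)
The plan is to unfold the equation $\chi(x) = \infty$ into a $\Pi^0_1$ formula and then invoke the standard constructive fact that such formulas are double-negation stable. Fix $x \in X$. Since $\Ninfty \subseteq 2^\bN$ and $\infty = 0^\omega$ is the constant zero sequence, extensional equality of sequences gives
\[ \chi(x) = \infty \quad\Longleftrightarrow\quad \forall n \in \bN.\ \chi(x)_n = 0. \]
Hence it suffices to show that the right-hand side is $\dneg$-stable, i.e.\ that $\dneg\left(\forall n \in \bN.\ \chi(x)_n = 0\right)$ entails $\forall n \in \bN.\ \chi(x)_n = 0$.

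I would isolate two elementary ingredients. First, each conjunct $\chi(x)_n = 0$ is decidable, because equality in the two-element set $2$ is decidable, and decidable propositions are $\dneg$-stable (from $p \vee \neg p$ one recovers $p$ from $\dneg p$ by case analysis). Second, the family of $\dneg$-stable propositions is closed under universal quantification: if $(P_n)_{n \in \bN}$ are all $\dneg$-stable, then $\forall n.\, P_n$ is $\dneg$-stable. To see this, assume $\dneg\left(\forall n.\, P_n\right)$ and fix an arbitrary $n$; by stability of $P_n$ it is enough to derive $\dneg P_n$. So assume $\neg P_n$; since $\left(\forall n.\, P_n\right) \Rightarrow P_n$, this yields $\neg\left(\forall n.\, P_n\right)$, contradicting the hypothesis, whence $\dneg P_n$ and therefore $P_n$. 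As $n$ was arbitrary, $\forall n.\, P_n$ follows. Instantiating this closure fact with $P_n := \left(\chi(x)_n = 0\right)$ completes the argument.

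I do not anticipate any genuine obstacle: the whole content is the reduction of ``$\chi(x) = \infty$'' to a universally quantified decidable predicate, which is immediate from the definition of $\Ninfty$ as bit sequences with $\infty = 0^\omega$. The point worth flagging explicitly is that we never decide the equation $\chi(x) = \infty$ as a whole---doing so would amount to an instance of $\LPO$---but only decide it coordinate by coordinate, which is always available, and it is precisely the universal quantifier over these decidable coordinates that the double negation can be pushed through.
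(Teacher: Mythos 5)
Your proof is correct and takes essentially the same approach as the paper: the paper's one-line argument derives $x \ge \underline{n}$ for every $n \in \bN$ by induction from $\neg(x \neq \infty)$, which is precisely your coordinatewise reduction of $\chi(x) = \infty$ to a universally quantified decidable predicate through which the double negation passes. Your write-up just makes explicit the two standard ingredients (decidable propositions are $\dneg$-stable, and $\dneg$-stable propositions are closed under universal quantification) that the paper leaves implicit.
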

\begin{proof}
Abstracting away, this follows from $\neg x \neq \infty \Rightarrow x = \infty$; from the
premise one can prove $x \ge \underline{n}$ by induction over $n$ and thus
conclude.
\end{proof}

We will not explicitly appeal to this lemma and will leave to the reader
checking that the sets we are using do fit in this pattern. Let us give
a few examples of such constructions that cover the main ideas behind our
use-cases.

\begin{example}
For any $s \in \Baire$, $\{s\} = \chi_s^{-1}(\infty)$ for some $\chi_s$
defined by corecursion in the obvious way:
\[ \chi_{ns}(mp) = \left\{ \begin{array}{ll}
  \underline{0} & \text{if $n \neq m$} \\
  \Succinfty(\chi_s(p)) & \text{otherwise}
\\ \end{array} \right. \]
\end{example}

\begin{example}
For the set $2\Ninfty = \{ 2x \mid x \in \Ninfty\} \subseteq \Baire$,
we may define $\chi_{2\Ninfty} : \Baire \to \Ninfty$ such that $\chi_{2\Ninfty}^{-1}(2\Ninfty) = 2\Ninfty$
by corecursion as follows:
\[
\chi_{2\Ninfty}(1p) = \chi_\infty(p) \qquad
\chi_{2\Ninfty}(0(k+1)p) = \underline{0} \quad \text{and} \quad
\chi_2(00p) = \Succinfty(\Succinfty(\chi_2(p)))
\]
\end{example}

\begin{example}
If we are given $\chi_1, \chi_2 : X \to \Ninfty$, then the preimage of
$\infty$ by the pointwise sum $\chi_1 + \chi_2$ is the union
$\chi_1^{-1}(\infty) \cup \chi_2^{-1}(\infty)$.
\end{example}

\begin{theorem}
  \label{thm:notCoproddneg}
Assuming Markov's principle, the following are equivalent:
\begin{itemize}
\item the limited principle of omniscience
\item $\bN + \Ninfty$ has the $\neg\neg$ strong Myhill property
\end{itemize}
\end{theorem}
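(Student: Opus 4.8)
The plan is to prove the two implications separately, with Markov's principle only needed for the harder converse.

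For the direction from $\LPO$, recall that $\LPO$ is exactly what is needed to obtain a bijection $\Ninfty \cong \bN$, so that $\bN + \Ninfty \cong \bN + \bN \cong \bN$. Since the strong Myhill property is invariant under bijection (one conjugates the two input injections $f,g$ by the isomorphism and conjugates back the resulting bijection), \Cref{thm:myhilliso55} transports to give that $\bN + \Ninfty$ has the plain strong Myhill property under $\LPO$. As $p \Rightarrow \dneg p$, this immediately entails the $\dneg$ strong Myhill property, and this direction uses neither $\MP$ nor any modality-specific reasoning.

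For the converse I would assume $\MP$ together with the $\dneg$ strong Myhill property and aim to produce a map $\Ninfty \to 2$ that differs from its value at $\infty$ infinitely often, so that \Cref{lem:oscillation} delivers $\LPO$. To this end I would reuse the oscillating ``ladder'' injections from the proof of \Cref{thm:oscNinftyNotMP} (pictured in \Cref{fig:NinftyMPLPO}), but realise them on $\bN + \Ninfty$ so that the tails of all ladders still accumulate at the single point $\inr(\infty)$ of the $\Ninfty$ summand, while the \emph{base rung} of the $n$-th ladder is placed at the concrete, decidably isolated point $\inl(n)$ of the $\bN$ summand. On the $\Ninfty$ part both maps are lifts of strictly increasing maps via \Cref{lem:liftInflationary}, extended on the $\bN$ part so as to make $\inl(n)$ the foot of the $n$-th ladder; they remain injective and their orbits are precisely the ladders together with the shared limit $\inr(\infty)$. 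Feeding $f,g$ to the $\dneg$ strong Myhill property yields a bijection $h$ with $\dneg\bigl(h(x) \in \bigcup_{m \in \bZ} f \circ (g \circ f)^{m}\bigr)$ for every $x$.

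The extraction of $\LPO$ is where the work lies, and it is the main obstacle. Because the edge condition on $h$ holds only under a double negation, I cannot recover a travelling exponent $\iota : \Ninfty \to \bZ$ as in \Cref{thm:oscNinftyNotMP}; this is exactly why that earlier argument needed $\Ninfty$-choice, which is unavailable here, and why $\Ninfty$ alone escapes via \Cref{thm:ninftymyhill}. The whole point of moving the ladder feet into the $\bN$ summand is that $h(\inl(n))$ is a concrete element whose summand is \emph{decidable}, so that, without any omniscience, I can read off whether $h$ pushes the $n$-th foot up its tail or not and define $\phi(\underline{n}) \in 2$ by a genuinely computable rule; I would set $\phi(\infty)$ from the value of $h$ at $\inr(\infty)$, using \Cref{thm:Ninftysel} to make the requisite search over the $\Ninfty$ summand decidable. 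The crux is then to show that $h$ being a genuine bijection forces $\phi$ to take both values for arbitrarily large $\underline{n}$, hence to disagree with $\phi(\infty)$ infinitely often: the single limit point $\inr(\infty)$ cannot simultaneously absorb the leakage created at the feet of all even and all odd ladders unless the chosen directions oscillate by parity, exactly the phenomenon exploited in \Cref{thm:oscNinftyNotMP}. Here the double negations are discharged against the concreteness of the probed feet and against $\MP$ (via the monotonicity-based arguments of \Cref{lem:MPmin} and \Cref{lem:badinjectionLPO}). Once $\phi$ is shown to oscillate, \Cref{lem:oscillation} together with $\MP$ yields $\LPO$.
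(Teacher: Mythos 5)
Your construction is the paper's: it uses exactly the ladder injections of \Cref{thm:oscNinftyNotMP}, lifted via \Cref{lem:liftInflationary} and extended to $\bN + \Ninfty$ by attaching the foot of the $n$-th even ladder to $\inl(n)$ through $f'$ and the foot of the $n$-th odd ladder through $g'$; your $\LPO$-implies-property direction is also fine. The gap is in the extraction step, which you acknowledge is ``where the work lies'' but never carry out, and the one concrete device you propose there is broken. You define $\phi$ by one rule on the points $\underline{n}$ (probing $h$ at the foot $\inl(n)$) and by a separate rule at $\infty$ (probing $h$ at $\inr(\infty)$). Constructively this does not define a total function $\Ninfty \to 2$: the subset $\underline{\bN} \cup \{\infty\}$ is not all of $\Ninfty$ without $\LPO$, and even evaluating your $\phi$ at a given $x$ requires deciding $x \in \underline{\bN}$ versus $x = \infty$, which is the very statement being proved. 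Since \Cref{lem:oscillation} demands a map defined on all of $\Ninfty$, the argument cannot get started. The paper sidesteps this with a uniform definition: $\alpha(x) = 1$ iff $h(\inr(x))$ lies in the left summand, which is decidable in $x$ because summand membership in a coproduct is always decidable --- this is precisely why the paper probes $h$ along the $\Ninfty$ part of the ladders rather than at the feet, the opposite of your choice.

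The oscillation claim itself is also only asserted. Your justification --- that $\inr(\infty)$ ``cannot simultaneously absorb the leakage'' unless directions oscillate by parity, ``exactly the phenomenon exploited in \Cref{thm:oscNinftyNotMP}'' --- imports the mechanism of that theorem, but as you yourself note, that mechanism (the exponent map $\iota$ obtained from $\Ninfty$-choice) is unavailable when the edge condition holds only under $\dneg$. What is actually needed, and what the paper proves, are two concrete facts about $\alpha$. First, by induction along the graph, $\dneg\, \exists m \in \bN.\; h^{-1}(\inl(n)) = \inr(\underline{\tuple{2n+1,m}})$; since $\tuple{2n+1,m} \ge n$, this refutes $\forall k \ge n.\; \alpha(\underline{k}) = 0$, and $\MP$ applied to the decidable sequence $\alpha(\underline{n+k})$ produces $k_1 \ge n$ with $\alpha(\underline{k_1}) = 1$ --- note this is where $\MP$ actually enters, not via \Cref{lem:MPmin} or \Cref{lem:badinjectionLPO}, neither of which plays any role in this proof. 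Second, for the value $0$, an injectivity pigeonhole: the connected component of the $n$-th double ladder contains exactly one left-summand point, namely $\inl(n)$, so $h$ cannot send both $\inr(\underline{\tuple{2n+1,0}})$ and $\inr(\underline{\tuple{2n+1,1}})$ into $\inl(\bN)$ (under $\dneg$ both would have to equal $\inl(n)$, and this being a negative statement the double negations discharge); since $\alpha$ is decidable, one of these two rungs witnesses $\alpha = 0$. Without steps of this kind, your final paragraph restates the goal rather than proving it.
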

\begin{proof}
We focus proving that $\bN + \Ninfty$ having the $\dneg$ Myhill property implies
$\LPO$; that $\LPO$ implies the other direction is straightforward.

Recall the functions $f, g : \bN \to \bN$ and their lifts
$\underline{f}, \underline{g} : \Ninfty \to \Ninfty$ defined in the proof
of~\Cref{thm:oscNinftyNotMP}. This allows us to argue that
the following functions $f', g' : 2 \times \Ninfty \to 2 \times \Ninfty$ are
well-defined
\[
  \begin{array}{llcl !\qquad llcl}
f' : & \bN + \Ninfty &\longto& \bN + \Ninfty
    &
g' : & \bN + \Ninfty &\longto& \bN + \Ninfty
\\
    & \inr(x) &\longmapsto& \inr(f_0(x))
    &
    & \inr(x) &\longmapsto& \inr(g_0(x))
\\
    & \inl(n) &\longmapsto& \inr(\underline{\tuple{2n,0}})
    &
    & \inl(n) &\longmapsto& \inr(\underline{\tuple{2n+1,0}})
\end{array}
\]
The corresponding graph is pictured in~\Cref{fig:notCoproddneg}; those
functions are easily checked to be injective.
Now,
assume we have a bijection $h : \bN + \Ninfty \to \bN + \Ninfty$
such that for every $x \in \bN + \Ninfty$,
$\dneg \exists m \in \bZ. \; h(x) = \left(f' \circ (g' \circ f')^m\right)(x)$.
Write $\alpha : \Ninfty \to 2$ the map defined by $\alpha(x) = 1 \Leftrightarrow
\exists y \in \Ninfty. \; h(x) = \inl(y)$.

\begin{figure}[h]
\[\begin{tikzcd}[cramped,row sep=small]
	&&& {} \\
	& \vdots & {} & {} & \vdots \\
	& {\underline{\langle0,2\rangle}} &&& {\underline{\langle0,2\rangle}} \\
	& {\underline{\langle1,1\rangle}} &&& {\underline{\langle1,1\rangle}} \\
	& {\underline{\langle2,0\rangle}} &&& {\underline{\langle2,0\rangle}} \\
	& {\underline{\langle0,1\rangle}} &&& {\underline{\langle0,1\rangle}} \\
	& {\underline{\langle1,0\rangle}} &&& {\underline{\langle1,0\rangle}} \\
	& {\underline{\langle0,0\rangle}} &&& {\underline{\langle0,0\rangle}} \\
	& 0 &&& 0 \\
	& 1 &&& 1 \\
	& \vdots &&& \vdots \\
	f' & {\mathbb{N} \uplus \Ninfty} &&& {\mathbb{N} \uplus \Ninfty} & g'
	\arrow[color={rgb,255:red,153;green,92;blue,214}, dashed, no head, from=3-2, to=2-3]
	\arrow[color={rgb,255:red,153;green,92;blue,214}, from=3-5, to=3-2]
	\arrow[color={rgb,255:red,168;green,31;blue,0}, from=4-2, to=4-5]
	\arrow[color={rgb,255:red,168;green,31;blue,0}, dashed, no head, from=4-5, to=2-4]
	\arrow[color={rgb,255:red,23;green,94;blue,23}, dashed, no head, from=5-2, to=1-4]
	\arrow[color={rgb,255:red,23;green,94;blue,23}, from=5-5, to=5-2]
	\arrow[color={rgb,255:red,153;green,92;blue,214}, from=6-2, to=3-5]
	\arrow[color={rgb,255:red,153;green,92;blue,214}, from=6-5, to=6-2]
	\arrow[color={rgb,255:red,168;green,31;blue,0}, from=7-2, to=7-5]
	\arrow[color={rgb,255:red,168;green,31;blue,0}, from=7-5, to=4-2]
	\arrow[color={rgb,255:red,153;green,92;blue,214}, from=8-2, to=6-5]
	\arrow[color={rgb,255:red,153;green,92;blue,214}, from=8-5, to=8-2]
	\arrow[color={rgb,255:red,153;green,92;blue,214}, squiggly, from=9-2, to=8-5]
	\arrow[color={rgb,255:red,168;green,31;blue,0}, squiggly, from=9-5, to=7-2]
	\arrow[color={rgb,255:red,23;green,94;blue,23}, squiggly, from=10-2, to=5-5]
	\arrow[color={rgb,255:red,254;green,156;blue,52}, squiggly, no head, from=10-5, to=2-3]
	\arrow["{{{:}}}"{description}, draw=none, from=12-1, to=12-2]
	\arrow[shift right, shorten <=8pt, shorten >=8pt, from=12-2, to=12-5]
	\arrow[shift right, shorten <=8pt, shorten >=8pt, from=12-5, to=12-2]
	\arrow["{{{:}}}"{description}, draw=none, from=12-6, to=12-5]
\end{tikzcd}\]
\caption{Injections used for~\Cref{thm:notCoproddneg}. The squiggly arrows are
the one that force the bijection to alternate between the $\bN$ and the $\Ninfty$
components. The coprojections $\inl$ and $\inr$ have been suppressed from the
picture for readability.}
\label{fig:notCoproddneg}
\end{figure}
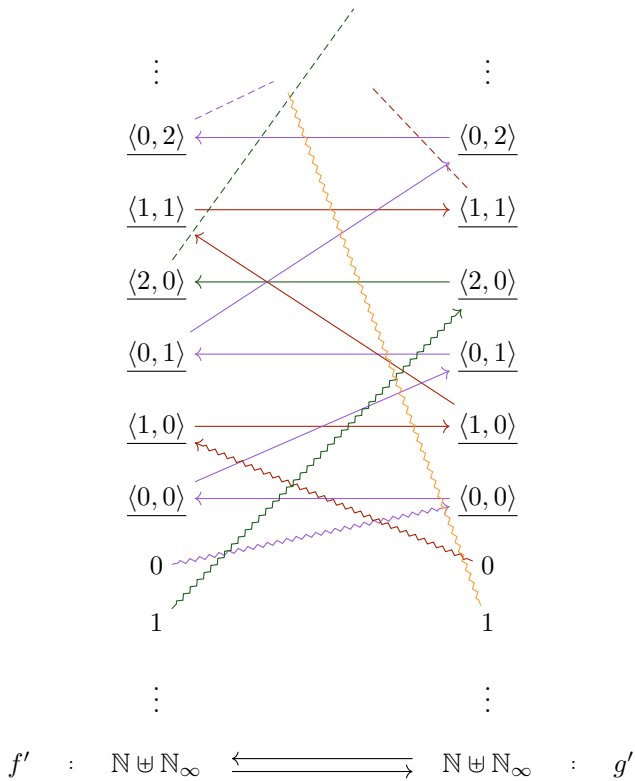
By~\Cref{lem:oscillation}, it suffices to
show that $\alpha$ takes both values $0$ and $1$ infinitely often. To this
end, fix $n \in \bN$ and let us show that for each $i \in 2$
we can find $k_i \ge n$ such that $\alpha(\underline{k_i}) = i$:
\begin{itemize}
\item First we can show by induction that we must have $\dneg \exists m \in \bN. \; h^{-1}(\inl(n)) = \inr(\underline{\tuple{2n+1,m}})$.
Since $\tuple{2n+1, m} \ge n$ for every $m \in \bN$, this means a fortiori that
it is not the case that for every $k \ge n$ that $\alpha(\underline{k}) = 0$. By Markov's principle
it means there exists some $k_1$ such that $\alpha(\underline{k_1}) = 1$.
\item Then, we can check if $\alpha(\underline{\tuple{2n+1,0}}) = 0$. If it is
  the case, we can take $k_0 = \tuple{2n+1,0}$. Otherwise,
  $\alpha(\underline{\tuple{2n+1,0}}) = \alpha(\underline{\tuple{2n+1,1}}) = 1$
  leads to a contradiction, so we can take $k_0 = \tuple{2n+1,1}$.
\end{itemize}
\end{proof}

\begin{theorem}
  \label{thm:nninftynotmyhill}
The following are equivalent:
\begin{itemize}
\item the limited principle of omniscience
\item $\bN \times \Ninfty$ has the $\neg\neg$ Myhill property
\item $\Ninfty^2$ had the $\dneg$ Myhill property.
\end{itemize}
\end{theorem}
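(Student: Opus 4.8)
The plan is to establish the two equivalences $\LPO \Leftrightarrow$ ``$\bN \times \Ninfty$ has the $\dneg$ Myhill property'' and $\LPO \Leftrightarrow$ ``$\Ninfty^2$ has the $\dneg$ Myhill property'' separately; together they yield the stated three-way equivalence. The forward implications are the easy ones: assuming $\LPO$ we have $\bN \cong \Ninfty$, hence $\bN \times \Ninfty \cong \bN \times \bN \cong \bN$ and $\Ninfty^2 \cong \bN$. Since the (strong) Myhill property is invariant under isomorphism and $\bN$ has the strong Myhill property by \Cref{thm:myhilliso55}, both sets inherit the full Myhill property, which a fortiori gives the $\dneg$ Myhill property, as restricting the admissible pairs $A,B$ to $\dneg$-stable ones only weakens the statement.

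For the hard direction I would fix an arbitrary $z \in \Ninfty$ and aim to decide $z = \infty \vee z \in \underline{\bN}$. Reusing the ``ladder'' injections $f,g$ from the proof of \Cref{thm:oscNinftyNotMP} and their lifts via \Cref{lem:liftInflationary}, I would package them into injections on $\bN \times \Ninfty$ in the spirit of the coproduct construction of \Cref{thm:notCoproddneg}, but with the data depending on $z$ so that a distinguished orbit ``closes up'' exactly when $z$ is finite. The crucial structural difference is that \Cref{thm:notCoproddneg} works with the $\dneg$ \emph{strong} Myhill property, where orbit-membership is only known under a double negation, forcing the appeal to \Cref{lem:oscillation} and hence to $\MP$; here the \emph{plain} $\dneg$ Myhill property instead yields a genuine set equality $h(A) = B$. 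The subsets $A,B$ must be $\dneg$-stable, so I would present them as effectively closed sets, i.e. preimages of $\infty$ under suitable maps $\chi : \bN \times \Ninfty \to \Ninfty$ built as in the examples preceding \Cref{thm:notCoproddneg} and closed under finite unions, making $\dneg$-stability automatic.

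The decoding step is where the absence of $\MP$ is purchased by Escard\'o's selection operator. From $h$ I would define a \emph{decidable} predicate $P_z$ on $\Ninfty$, phrased via finitely many observations of the coordinates of the values of $h$ (the $\bN$-coordinate being decidable data and the $\Ninfty$-coordinate comparable to any $\underline{n}$ through \Cref{lem:minwoMP}), engineered so that \emph{constructively} $\exists x \in \Ninfty.\, P_z(x)$ entails $z \in \underline{\bN}$, with an explicit witness read off the $\bN$-coordinate of the relevant value of $h$, while $\forall x \in \Ninfty.\, \neg P_z(x)$ entails $z = \infty$, proved by deriving $\underline{n} \le z$ for all $n$ by induction since a failure at some level would itself produce a point satisfying $P_z$. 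By \Cref{thm:Ninftysel} the dichotomy $(\exists x.\, P_z(x)) \vee (\forall x.\, \neg P_z(x))$ is decidable, and each alternative delivers one disjunct of $\LPO$ for $z$, with no double-negation elimination or $\MP$ needed. The case of $\Ninfty^2$ is handled by the same device built directly on $\Ninfty^2$: the lane index, previously in $\bN$, now ranges over $\Ninfty$ via \Cref{lem:liftInflationary}, whose guarantee that strictly increasing maps lift to embeddings keeps $f,g$ injective and sends $\infty$-indices to $\infty$-indices, while the effectively closed $A,B$ and the decode transport unchanged.

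The main obstacle I anticipate is the faithfulness of the gadget: one must choose $f, g, A, B$ so that (i) $f$ and $g$ are honest injections with $A = f^{-1}(B)$ and $B = g^{-1}(A)$, each mapping $\infty$-points to $\infty$-points, so that the reductions do not already decide $\LPO$ by \Cref{lem:inftyToIsolated}; (ii) the \emph{global} bijection $h$ is nonetheless forced to reveal the finiteness of $z$; and (iii) the resulting $P_z$ is genuinely decidable and satisfies the two constructive implications above. Balancing (i) against (ii) is exactly the tension exploited in the oscillation arguments, and it is the replacement of an oscillation conclusion by the two explicit implications for $P_z$ that is the new ingredient making the result hold without $\MP$.
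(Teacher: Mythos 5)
There is a genuine gap: your hard direction is a plan, not a proof, and the one thing it defers --- ``the faithfulness of the gadget'' in your closing paragraph --- is the entire mathematical content of the theorem. You never write down $f$, $g$, $A$, $B$, or the predicate $P_z$, and the route you sketch for finding them is structurally mismatched to the hypothesis you have. The ladder injections of \Cref{thm:oscNinftyNotMP} and the coproduct gadget of \Cref{thm:notCoproddneg} extract information from \emph{orbit positions}: they need either the strong Myhill property plus choice (to get the index map $\iota : X \to \bZ$) or at least the $\dneg$-weakened orbit condition, and in the coproduct case the paper additionally needs $\MP$ and \Cref{lem:oscillation}. The plain $\dneg$ Myhill property you are working with hands you only a bijection $h$ with $h(A) = B$ for the specific $\dneg$-stable $A, B$ you supplied --- no orbit data whatsoever --- so a ``distinguished orbit that closes up exactly when $z$ is finite'' is invisible to $h$ unless $A$ and $B$ themselves can detect the closure, and $\dneg$-stable subsets cannot track the $\bZ$-indexing along a ladder (this is precisely why the paper retreats to the \emph{strong} property for $\bN + \Ninfty$). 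Parameterizing everything by $z$ and decoding with \Cref{thm:Ninftysel} is not obviously hopeless, but as it stands nothing in your write-up certifies that conditions (i)--(iii) can be met simultaneously, and you say so yourself.

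The paper's actual proof needs none of this machinery --- no parameterization by $z$, no $\MP$, no selection operator, no oscillation. Take $A = \{(0, \underline{0})\} \cup \bN \times \{\infty\}$ and $B = \bN \times \{\infty\}$ (both effectively closed, hence $\dneg$-stable), with $f(0,\underline{0}) = (0,\infty)$, $f(n,x) = (n+1,x)$ otherwise, and $g(n,x) = (n, x + \underline{1})$; one checks directly $A = f^{-1}(B)$ and $B = g^{-1}(A)$. The point is an \emph{isolated-point mismatch}: $A$ contains the isolated point $(0,\underline{0})$ while $B$ consists only of limit points, so the bijection $h$ with $h(A) = B$ must satisfy $h(0,\underline{0}) = (k,\infty)$ for some $k \in \bN$, and then $x \mapsto h^{-1}(k,x)$ is an injection $\Ninfty \to \bN \times \Ninfty$ sending $\infty$ to the isolated point $(0,\underline{0})$, whence $\LPO$ by \Cref{lem:inftyToIsolated} --- a lemma your condition (i) treats only as a hazard for $f$ and $g$, when it is in fact the engine of the whole argument, applied to $h^{-1}$ rather than to the reductions. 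The $\Ninfty^2$ case is the same construction with $A = \{(\underline{0},\underline{0})\} \cup \underline{\bN} \times \{\infty\}$ and $B = \underline{\bN} \times \{\infty\}$. Your easy direction ($\LPO$ implies both sets are isomorphic to $\bN$, which has the strong Myhill property) is correct and matches the paper.
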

\begin{proof}
That $\LPO$ implies those sets have the ($\neg\neg$) Myhill property
is because 
$\LPO$ implies $\Ninfty \cong \bN$ and thus $\bN \times \Ninfty \cong \Ninfty^2 \cong \bN$.

For the other directions, let's first work with $\bN \times \Ninfty$.
Take $A = \{(0, \underline{0})\} \cup \bN \times \{\infty\}$
and $B = \bN \times \{\infty\}$. Define $f : \bN \times \Ninfty \to \bN \times \Ninfty$
by setting $f(0,\underline{0}) = (0, \infty)$ and $f(n, x) = (n + 1, x)$.
$(0,\underline{0})$ is isolated, so this defines a total function which witnesses
that $A \preceq_1^{\bN \times \Ninfty} B$. Similarly, $g : (n, x) \mapsto (n, x + \underline{1})$
witnesses that $B \preceq_1^{\bN \times \Ninfty} A$. Since we are assuming
$\bN \times \Ninfty$ has the Myhill property, we have a bijection $h : \bN \times \Ninfty \to \bN \times \Ninfty$
such that $h(0, \underline{0}) = (k, \infty)$ for some $k \in \bN$. Then
$x \mapsto h^{-1}(k,x)$ is an injection $\Ninfty \to \bN \times \Ninfty$ sending
$\infty$ to an isolated point, so we can conclude by~\Cref{lem:inftyToIsolated}.

For $\Ninfty^2$, we can use the exact same idea with
$A = \{(\underline{0}, \underline{0})\} \cup \underline{\bN} \times \{\infty\}$
and $B = \underline{\bN} \times \{\infty\}$.
\end{proof}

Because of~\Cref{thm:ninftymyhill} (and Myhill's isomorphism theorem), either
of these theorems
shows that Markov's principle and closure under products of the Myhill property
imply $\LPO$.
We can use this result to prove a similar result about Baire space and Cantor
space not constructively having the (strong) Myhill property.

\begin{theorem}
  \label{thm:bairenotstrongmyhill}
  If Markov's principle holds and either of $\Baire$ or $\Cantor$ has the $\neg\neg$ strong Myhill property, then $\LPO$ holds.
\end{theorem}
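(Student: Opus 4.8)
The plan is to reduce to \Cref{thm:nninftynotmyhill} by realizing $\bN \times \Ninfty$ (for $\Baire$) and $\Ninfty^2$ (for $\Cantor$) as \emph{effectively closed} --- hence $\dneg$-stable --- subsets of the ambient space, and then transferring a strong Myhill witness along the inclusion. I focus on $\Baire$; the case of $\Cantor$ is identical after replacing $\bN \times \Ninfty$ by $\Ninfty^2$ and using $\Cantor \cong \Cantor \times \Cantor$. Write $\Baire \cong \bN \times \Baire$ by splitting off the first coordinate, and set $C = \{ (a, \sigma) \in \bN \times \Baire \mid \sigma \in \Ninfty \}$, a copy of $\bN \times \Ninfty$. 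Since $\Ninfty \subseteq \Cantor$ is effectively closed, so is $C$, and hence $z \in C$ is $\dneg$-stable for every $z \in \Baire$ by the lemma that effectively closed subsets are $\dneg$-stable; moreover $C$ has the same isolated points as $\bN \times \Ninfty$, e.g. $(0, \underline 0)$ (detected by $a = 0$ and $\sigma_0 = 1$).

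The first step is to extend the injections $f_0, g_0$ witnessing $A = \{(0,\underline 0)\} \cup \bN \times \{\infty\}$ and $B = \bN \times \{\infty\}$ from the proof of \Cref{thm:nninftynotmyhill} to injections $F, G : \Baire \to \Baire$. The map $g_0(n, x) = (n, \Succinfty(x))$ extends verbatim to $G(a,\sigma) = (a, 0\sigma)$. For $f_0(n,x) = (n+1,x)$ (with exceptional value $f_0(0,\underline 0) = (0,\infty)$), the only subtlety is this exceptional value at the isolated point $(0,\underline 0)$; but on $C$ that point is singled out by the \emph{decidable} test ``$a = 0$ and $\sigma_0 = 1$''. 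So I set $F(a,\sigma) = (a+1,\sigma)$ whenever that test fails, and on the exceptional region $\{a = 0,\ \sigma_0 = 1\}$ I put $F(0,\sigma) = (0, 2\cdot\operatorname{tail}(\sigma))$ (pointwise doubling), whose image lies in column $0$, is injective in $\sigma$, and meets $C$ only at $(0,\infty)$ --- reached exactly from $(0,\underline 0)$. One checks that $F$ and $G$ are injective on all of $\Baire$, restrict to $f_0, g_0$ on $C$, and that $F, G$ together with their partial inverses map $C$ into $C$; in particular every $F,G$-orbit of a point of $C$ stays inside $C$.

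Now apply the $\dneg$ strong Myhill property of $\Baire$ to $F, G$ to get a bijection $h : \Baire \to \Baire$ with $\dneg\bigl(\exists m \in \bZ.\ h(x) = (F \circ (G \circ F)^m)(x)\bigr)$ for every $x$. For $x \in C$, whenever such an $m$ exists the point $(F \circ (G \circ F)^m)(x)$ lies in $C$ by orbit control, so $\dneg(h(x) \in C)$, and $\dneg$-stability of $C$-membership upgrades this to $h(x) \in C$; the symmetric argument applied to $h^{-1}$ gives $h^{-1}(C) \subseteq C$, so $h$ restricts to a bijection $h_0 : C \to C$ still satisfying the path property with respect to $f_0, g_0$. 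Since $f_0, g_0$ reduce the $\dneg$-stable sets $A$ and $B$ to one another, traversing the graph $G_{f_0,g_0}$ preserves $A/B$-status, and $\dneg$-stability of $B$ then forces $h_0(A) = B$. In particular $h_0(0,\underline 0) = (k, \infty)$ for some $k \in \bN$, so $x \mapsto h_0^{-1}(k, x)$ is an injection $\Ninfty \to C$ sending $\infty$ to the isolated point $(0,\underline 0)$, and \Cref{lem:inftyToIsolated} yields $\LPO$. (Although Markov's principle is assumed in the statement, this route through \Cref{thm:nninftynotmyhill} does not actually use it.)

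I expect the extension step to be the main obstacle. The reason it is delicate is that $\bN \times \Ninfty$ is \emph{not} a complemented summand of $\Baire$: no point of $\Baire$ is isolated, whereas a complemented subset would inherit isolated points, so one cannot simply make $F, G$ act as the identity off $C$ by case-splitting on membership in $C$. What rescues the construction is that the single exceptional value of $f_0$ is governed by a genuinely decidable test on the leading coordinates, allowing $F, G$ to be defined \emph{uniformly} on all of $\Baire$ while still confining the relevant orbits to the effectively closed set $C$; the passage from $\dneg(h(x) \in C)$ to $h(x) \in C$ then does the rest, and the analogous confinement of the exceptional region has to be set up with the same care in the $\Ninfty^2$-in-$\Cantor$ variant.
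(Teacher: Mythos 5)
Your proof is correct, and it takes a genuinely different route through the key technical step. The paper also reduces to \Cref{thm:nninftynotmyhill} by viewing $\bN \times \Ninfty$ (resp.\ $\Ninfty^2$) as an effectively closed copy inside $\Baire$ (resp.\ $\Cantor$) and restricting the bijection, but it gets the ambient injections from general-purpose extension lemmas (\Cref{lem:contNNinftyext} and \Cref{lem:Ninfty2-ext}): \emph{any} injection on the closed copy with a suitable modulus of uniform continuity extends to an injection of the whole space preserving the copy in both directions, where Markov's principle is used to recover injectivity of the extension. You instead build bespoke extensions $F, G$ of the two specific injections $f_0, g_0$, exploiting that the single exceptional value of $f_0$ sits over a \emph{decidable} region ($a = 0$ and $\sigma_0 = 1$), with the pointwise-doubling trick confining the exceptional image so that it meets $C$ exactly at $(0,\infty)$; your verifications that $F, G$ are injective with $F^{-1}(C) = G^{-1}(C) = C$, and the subsequent upgrade from $\dneg(h(x) \in C)$ to $h(x) \in C$ via $\dneg$-stability, are all sound (indeed you are more explicit than the paper about where $\dneg$-stability of $C$, $A$ and $B$ is used, which the paper's proof glosses over). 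What each approach buys: the paper's lemmas are reusable for arbitrary uniformly continuous injections, at the price of assuming $\MP$; your ad hoc construction is tailored to these two maps but, as you correctly observe, eliminates $\MP$ entirely, so your argument actually proves the slightly sharper statement that the $\dneg$ strong Myhill property for $\Baire$ or $\Cantor$ implies $\LPO$ outright. One caveat: you only sketch the $\Cantor$ case, and pointwise doubling is unavailable there since values must stay in $\{0,1\}$; the fix is routine in your framework (on the exceptional region $\sigma_0 = \tau_0 = 1$ of $\Cantor \times \Cantor$, send $(1\sigma', 1\tau')$ to $(1 \cdot \mathrm{interleave}(\sigma', \tau'), 0^\omega)$, which meets $\Ninfty^2$ exactly at $(\underline{0}, \infty)$, reached only from $(\underline{0}, \underline{0})$), but it should be spelled out rather than deferred.
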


Before proving that, let us establish a preliminary lemma about extending
continuous\footnote{Here we assume the standard topology on $\Baire$, $\Cantor$ and
$\mathbb{N}$. $\Ninfty$'s topology is simply inherited from $\Baire$.}
injections $\bN \times \Ninfty \to \bN \times \Ninfty$ to injections $\Baire \to \Baire$.
To simplify notations, write $\bN\Ninfty$ for the image of
$\bN \times \Ninfty$ in $\Baire$ by the map $(n, p) \mapsto np$; it is clear
that $\bN\Ninfty \cong \bN \times \Ninfty$.

\begin{lemma}
  \label{lem:contNNinftyext}
Assuming Markov's principle, if $f : \bN\Ninfty \to \bN\Ninfty$ has a modulus of continuity
$\mu : \bN \times \bN \to \bN$ such that $f(x)_n$ only depends on the $\mu(x_0, n)$ first
elements of $x$, then it extends to
a continuous injection $\overline{f} : \Baire \to \Baire$ such that
$\overline{f}^{-1}(\bN\Ninfty) = \bN\Ninfty$.
\end{lemma}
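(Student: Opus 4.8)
The plan is to define $\overline{f}$ by a single continuous reading procedure that agrees with $f$ as long as the input still looks like an element of $\bN\Ninfty$, and that, as soon as the input provably leaves $\bN\Ninfty$, switches to copying the input verbatim after a marker symbol which simultaneously forces the output out of $\bN\Ninfty$. Concretely, call a finite prefix of $x \in \Baire$ \emph{consistent} if its tail part (the positions $\ge 1$) consists of bits containing at most one $1$; this is decidable, and I let the \emph{escape stage} of $x$ be the least position $j$ at which the prefix becomes inconsistent (which may fail to exist). Assuming without loss of generality that $\mu(x_0, \cdot)$ is monotone, I write $N(x_0, j) = \#\{ n : \mu(x_0,n) \le j \}$ for the number of output coordinates already pinned down by a consistent prefix of length $j$; since such a prefix extends to an element of $\bN\Ninfty$ by padding with zeroes, the modulus hypothesis makes $f(\cdot)_n$ well-defined for $n < N(x_0,j)$. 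Then $\overline{f}(x)$ is computed as follows: while the prefix read so far stays consistent I emit the determined values $f(\cdot)_n$; if $x$ escapes at stage $j$, I put $N^{\ast} = \max(N(x_0,j),1)$, emit the already-determined values on coordinates $< N^{\ast}$ (with a dummy $0$ at coordinate $0$ in the degenerate case $N(x_0,j) = 0$), place the marker $2$ at coordinate $N^{\ast}$, and copy $x$ verbatim onto all coordinates $> N^{\ast}$. For each fixed output coordinate this inspects only a finite prefix of $x$ through a decidable dichotomy, so $\overline{f}$ is total and continuous with an explicit modulus.

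The extension property and the forward inclusion are then immediate: if $x \in \bN\Ninfty$ every prefix is consistent, no escape is ever triggered, and $\overline{f}(x)_n = f(x)_n$, so $\overline{f}$ restricts to $f$ and maps $\bN\Ninfty$ into itself. I expect the backward inclusion $\overline{f}^{-1}(\bN\Ninfty) \subseteq \bN\Ninfty$ not to need Markov's principle at all: membership in $\bN\Ninfty$ is $\dneg$-stable, and writing $b_k \in \{0,1\}$ for the decidable predicate ``$k$ is the escape stage of $x$'', one has $x \in \bN\Ninfty$ iff $\forall k.\, b_k = 0$. But if $b_k = 1$ for some $k$, then by construction $\overline{f}(x)$ carries the marker $2$ at the tail position $N^{\ast} \ge 1$, contradicting $\overline{f}(x) \in \bN\Ninfty$; hence each $b_k = 0$.

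The hard part will be injectivity, precisely because one cannot decide whether a given $x$ stays in $\bN\Ninfty$ or escapes, yet the two regimes are glued together; this is where Markov's principle earns its keep. Suppose $\overline{f}(x) = \overline{f}(x')$. As $x = x'$ is $\dneg$-stable it suffices to refute $x \neq x'$. If $x \in \bN\Ninfty$, then by the forward inclusion applied to the common value $x' \in \bN\Ninfty$ too, whence $f(x) = f(x')$ and injectivity of $f$ gives $x = x'$, contradicting $x \neq x'$; this proves $\neg(x \in \bN\Ninfty)$, and symmetrically $\neg(x' \in \bN\Ninfty)$. Markov's principle now produces genuine escape stages $j$ and $j'$, so both images have the escape shape (determined prefix, then marker $2$, then a verbatim copy), with all determined tail entries being bits.

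It remains to read off the conclusion from this shape. The position of the first tail entry $\ge 2$ of the common sequence equals $N^{\ast} = \max(N(x_0,j),1)$ computed from $x$ and equals $M^{\ast} = \max(N(x'_0,j'),1)$ computed from $x'$, forcing $N^{\ast} = M^{\ast}$; and the coordinates strictly beyond this marker are the verbatim copy of $x$ on one side and of $x'$ on the other, so $x = x'$ — the desired contradiction. This yields $\neg\neg(x = x')$ and hence $x = x'$ by stability, so $\overline{f}$ is the required continuous injective extension.
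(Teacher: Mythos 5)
Your proof is correct, and while your construction of $\overline{f}$ is essentially the paper's (track $f$ while the prefix stays consistent; once it breaks, emit the input after an escape marker --- the paper fuses marker and copy into $\overline{f}(x)_n = 2 + x_{n-n_0}$ where you use a single $2$ followed by a verbatim copy), your injectivity argument takes a genuinely different and arguably cleaner route. The paper attacks $\overline{f}(x) = \overline{f}(y) \Rightarrow x = y$ head-on: it uses $\MP$ and injectivity of $f$ to manufacture a ``modulus of injectivity'' $\nu$, then proves $x_n = y_n$ by strong induction, at each stage approximating $x$ and $y$ by genuine elements $x', y' \in \bN\Ninfty$ sharing long prefixes and playing $\nu$ against the modulus $\mu$; the mixed regime, where one cannot decide whether the inputs lie in $\bN\Ninfty$, is exactly what makes that delicate. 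You instead exploit that equality in $\Baire$ is $\dneg$-stable, assume $x \neq x'$, and eliminate the undecidable dichotomy wholesale: the forward and backward inclusions (the latter, as you correctly observe, needing no $\MP$ since membership in $\bN\Ninfty$ is a decidably-indexed $\Pi^0_1$ condition) together with injectivity of $f$ refute $x \in \bN\Ninfty$ and $x' \in \bN\Ninfty$, after which $\MP$ is invoked exactly once, to materialize actual escape stages, and the contradiction is read off from the escape shape: the first tail entry $\ge 2$ of the common output pins down $N^{\ast} = M^{\ast}$ (your $\max(\cdot,1)$ guaranteeing the marker sits in the tail is the right precaution, mirroring the paper's $n_0 > 0$), and the verbatim tails reconstruct both inputs. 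This buys a shorter argument with $\MP$ localized to one transparent step and no $\nu$; the only thing it presupposes is that the escape branch embeds the whole input in the output, which both your construction and the paper's arrange, so nothing is lost. One small repair: for $N(x_0,j) = \#\{n \mid \mu(x_0,n) \le j\}$ to be a finite number you need $\mu(x_0,\cdot)$ strictly increasing, not merely monotone --- a bounded monotone modulus would make this set infinite (such a $\mu$ is incompatible with injectivity of $f$, but proving that is an unnecessary detour); this is the same harmless normalization the paper performs, e.g.\ replacing $\mu(x_0,n)$ by $\max_{k \le n}\mu(x_0,k) + n + 1$.
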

\begin{proof}
Morally, this can be done because $\bN \times \Ninfty$ is a nice enough closed
subspace of $\Baire$ such that, for any open $U$ of $\Baire$ with $U \cap \bN\Ninfty$
is inhabited, we can inject $\Baire \setminus \bN\Ninfty$ in $U \setminus \bN\Ninfty$.

First, let us require, without loss of generality, that $\mu$ is strictly
increasing on its second component.
We set $\overline{f}(x)_n = f(x)_n$ as long as the $\mu(n)$
first bits of $x$ are valid prefixes of elements of $\bN\Ninfty$. If for some minimal $n_0 > 0$ we have that
the first $\mu(x_0, n_0)$ bits of $x$ do not form a valid prefix of an element of $\bN\Ninfty$, for every $n \ge n_0$
we set $\overline{f}(x)_{n} = 2 + x_{n - n_0}$. $\overline{f}$ is well-defined by
recursion and also uniformly continuous, with the same modulus of continuity.
It is also easy to argue by induction that if $f(x) \in \bN\Ninfty$ if and only if $x$ does.

Now let us argue that $\overline{f}$ is injective. Fix some $x, y \in \Baire$
such that $\overline{f}(x) = \overline{f}(y)$.
First, let us note that if for any $n \in \bN$, the first $n$ bits of $x$ or
$y$ do not form a valid prefix of an element of $\Ninfty$, then we
easily get $x = y$ by finding the first value of $\overline{f}(x)_{n_0} \ge 2$
for the correct $n_0 > 0$.
Second, using Markov's principle and the injectivity of $f$, we can define a
function $\nu : \bN \to \bN$ such that $x_n \neq y_n$ implies that
the first $\nu(n)$ bits of $f(x)$ differs from those of $f(y)$.

First, we need to show that $x_0 = y_0$.
We can check that $x$ and $y$ are valid prefixes of some elements
of $\Ninfty$ up to $M_0 = \max(\mu(x_0,\nu(0)), \mu(y_0, \nu(0)))$.
If they aren't then we have $x = y$ by our preliminary remark.
If they are, then we have some $x'$ and $y' \in \Ninfty$
that match $x$ and $y$ respectively on their first $M_0$ bits such that
\[f(x')_k = \overline{f}(x)_k = \overline{f}(y)_k = f(y')_k \qquad \text{for $k \le \nu(0)$}\]
By definition of $\nu(0)$, we have $x'_0 = y'_0$ and a fortiori $x_0 = x'_0 = y'_0 = y_0$.

We now proceed to show that $x_n = y_n$ for every $n > 0$ by strong
induction; the argument is essentially the same.
Set $M_n = \mu(x_0, \nu(n))$. Recall that $x_0 = y_0$, and note that
$M_n \ge n$: since $f$ is injective, $\nu(n) \ge n$, and since $\mu$ is
strictly increasing, we have $M_n \ge \nu(n) \ge n$.
We can check that $x$ and $y$ are valid prefixes of some elements
of $\Ninfty$ up to $M_n$. If they aren't then we have $x = y$ by
our preliminary remark.
If they are, then we have some $x'$ and $y' \in \Ninfty$
that match $x$ and $y$ respectively on their first $M_n$ bits such that
\[f(x')_k = \overline{f}(x)_k = \overline{f}(y)_k = f(y')_k \qquad \text{for $k \le \nu(n)$}\]
So in particular, we have $x'_n = y'_n$ by definition of $\nu$. And since $M_n \ge n$,
we a fortiori have $x_n = x'_n = y'_n = y_n$.
\end{proof}

Similarly, we can regard $\Ninfty^2$ as a subspace of $\Cantor$ via the
following embedding and get an analogous extension lemma.
\[
\begin{array}{llcl}
j :& \Ninfty^2 &\longrightarrow& \Cantor \\
  & (p, q) &\longmapsto& \left\{\begin{array}{lcl}
       2n &\mapsto& p_n \\
       2n+1 &\mapsto& q_n \\
  \end{array}
       \right.
\end{array}
\]

\begin{lemma}
  \label{lem:Ninfty2-ext}
Assuming Markov's principle, if $f : \Ninfty^2 \to \Ninfty^2$ is a uniformly continuous injection, then
there is an explicit continuous injection $\overline{f} : \Cantor \to \Cantor$
such that $\overline{f} \circ j = j \circ f$ and
  $\overline{f}^{-1}(j(\Ninfty^2)) = j(\Ninfty^2)$.
\end{lemma}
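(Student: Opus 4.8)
The plan is to mimic the proof of~\Cref{lem:contNNinftyext} almost verbatim, the only genuinely new ingredient being the escape mechanism. In~\Cref{lem:contNNinftyext} one had the fresh letters $2, 3, \dots$ of $\Baire$ available to record, outside $\bN\Ninfty$, both the stage at which $x$ leaves the subspace and a verbatim copy of $x$; in $\Cantor$ no such letters exist, so the whole difficulty is to carry out the escape purely in bits. I would first note that, since $f$ is uniformly continuous, $j \circ f \circ j^{-1}$ on $j(\Ninfty^2)$ admits a modulus $\mu : \bN \to \bN$ so that its $n$-th output bit depends only on the first $\mu(n)$ input bits. The predicate ``$x_0 \cdots x_n$ is a valid prefix of an element of $j(\Ninfty^2)$'' is decidable on finite strings (at most one $1$ among even positions and at most one among odd positions), so watching $x \in \Cantor$ as $n$ grows, $\MP$ guarantees that if $x \notin j(\Ninfty^2)$ then a minimal escape stage $n_0$ exists, and $x \in j(\Ninfty^2)$ precisely when no escape ever occurs. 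I would then set $\overline{f}(x)_n = (j \circ f \circ j^{-1})(x)_n$ for as long as the relevant prefix of $x$ stays valid, switching to the escape encoding once escape is detected.

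The key step, and the main obstacle, is to design this escape encoding so that it is simultaneously continuous, injective, and lands in $\Cantor \setminus j(\Ninfty^2)$, all without extra symbols. I would reserve the \emph{even} positions of the output as an off-manifold flag: upon escape at stage $n_0$, the remaining output places two $1$s in even positions, so that the even subsequence of $\overline{f}(x)$ carries at least two $1$s and hence $\overline{f}(x) \notin j(\Ninfty^2)$; the positions of these two $1$s record $n_0$ in a self-delimiting way, and the subsequent bits copy $x$ verbatim. Since the forced head of $\overline{f}(x)$ depends only on the already-read valid prefix and the escape tail reads further bits of $x$ one at a time, $\overline{f}$ is continuous with the same modulus. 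The identity $\overline{f} \circ j = j \circ f$ holds by construction, and because escape outputs are off-manifold while valid inputs map into $j(\Ninfty^2)$ (whose even subsequences carry at most one $1$), we obtain $\overline{f}^{-1}(j(\Ninfty^2)) = j(\Ninfty^2)$.

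For injectivity I would not case-split on whether $x$ escapes, since that is only a $\Sigma^0_1$ predicate; instead I would replay the argument of~\Cref{lem:contNNinftyext}. Assuming $\overline{f}(x) = \overline{f}(y)$, I would use $\MP$ and the injectivity of $f$ to build a reverse modulus $\nu : \bN \to \bN$ such that $x_n \neq y_n$ forces a disagreement within the first $\nu(n)$ output bits, and then prove $x_n = y_n$ for all $n$ by strong induction. The base case $x_0 = y_0$ and the off-manifold case are settled by the escape flag: two outputs agreeing on a long enough prefix either both stay on the manifold (where injectivity of $f$ applies) or both carry the same recoverable pair $(n_0, x)$.

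The one point that will require care is the boundary between the two regimes, namely a point $x \in j(\Ninfty^2)$ approached by escaping points: here continuity is recovered because such approximants escape at stages tending to infinity, so their images agree with $\overline{f}(x)$ on ever-longer prefixes. I expect the bookkeeping of the bit-level encoding of $n_0$ (to keep the two marker $1$s on even positions irrespective of the parity of the forced head length) to be the fiddliest part, but it is routine once the marker convention is fixed.
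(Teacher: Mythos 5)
Your proposal is correct and takes essentially the same approach as the paper, which simply says ``same idea as in the proof of~\Cref{lem:contNNinftyext}; left to the reader''. You have in fact supplied exactly the missing ingredient the paper's one-line proof presupposes: since $\Cantor$ has no fresh letters to escape with, you flag off-manifold outputs by forcing two $1$s among the even positions (which no element of $j(\Ninfty^2)$ can have) while recording the escape stage and copying $x$, and then you replay the $\nu$/strong-induction injectivity argument verbatim.
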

\begin{proof}
Same idea as in the proof of~\Cref{lem:contNNinftyext}; left to the reader.
\end{proof}

\begin{proof}[Proof of~\Cref{thm:bairenotstrongmyhill}]
Let us treat the case of $\Baire$.
Take $A, B, f$ and $g$ as in the first stage of the proof of~\Cref{thm:nninftynotmyhill} modulo
the isomorphism $\bN \times \Ninfty \cong \bN\Ninfty$. $f$ and $g$ are
clearly continuous. Hence, by~\Cref{lem:contNNinftyext}, we have injections
$\overline{f}, \overline{g} : \Baire \to \Baire$ that extend $f$ and $g$
respectively and such that $\overline{f}^{-1}(\bN\Ninfty) = \overline{g}^{-1}(\bN\Ninfty) = \bN\Ninfty$.
Now assume that $\Baire$ has the strong Myhill property and call $\overline{h}$
the bijection obtained from $\overline{f}$ and $\overline{g}$.
Since
$\overline{h} \subseteq \bigcup_{m \in \bZ} \overline{f} \circ (\overline{g} \circ \overline{f})^m$
and $\overline{f}, \overline{g}$ both preserve $\bN\Ninfty$ by inverse and
forward images, we have that $\overline{h}$ restricts to $h : \bN\Ninfty \to \bN\Ninfty$
such that $h \subseteq \bigcup_{m \in \bZ} f \circ (g \circ f)^{m}$.
So in particular, we have $h(A) = h(B)$, and we can conclude via the same argument
used to prove~\Cref{thm:nninftynotmyhill}.

The case of $\Cantor$ having the strong Myhill property is treated similarly by
  using~\Cref{lem:Ninfty2-ext} instead of \Cref{lem:contNNinftyext}.
\end{proof}

\begin{remark}
  \label{rem:bairecantornotmyhill}
While~\Cref{thm:bairenotstrongmyhill} only mentions the $\dneg$ strong Myhill property,
it is sufficient to show that $\Cantor$ and $\Baire$ having the $\dneg$ Myhill property
is not provable constructively by~\Cref{rem:modest}:
the Kleene-Vesley realizability model invalidates that since $\Cantor$ and
$\Baire$ are modest sets in that setting.
\end{remark}
\subsection{An edge case: $2 \times \Ninfty$}

In the previous subsection, counter-examples for particular sets $X$
having the $\dneg$ Myhill property are obtained by
constructing (computable) injections $f$ and $g$ such that any corresponding
$h$ cannot be continuous. Here we look at $2 \times \Ninfty$ where such a strategy
cannot work as a continuous $h$ always exist. However, we will show that
one cannot prove that it has the $\dneg$ Myhill property because a (potentially
multivalued) functional that produces $h$ from (representations of) $f$ and $g$
cannot be continuous.

First, let us show that continuous solutions always exist, after characterizing
continuous injections
$2 \times \Ninfty \to 2 \times \Ninfty$ (an illustration of the next lemma
is given in~\Cref{fig:continj2Ninfty}).

\begin{figure}[h]
\begin{center}
  \includegraphics[scale=0.7]{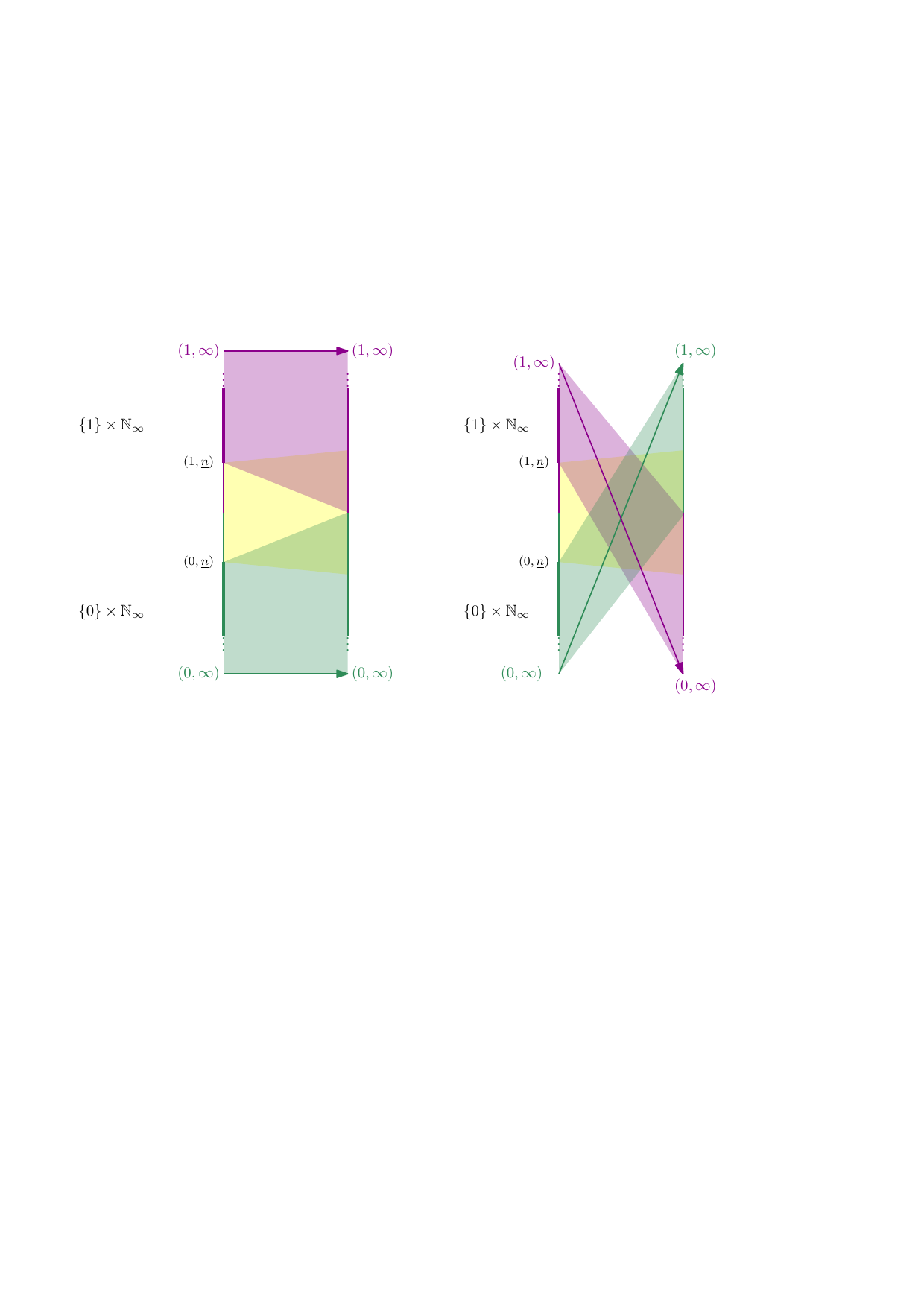}
\end{center}
\caption{Illustration of the two kinds of continuous injections
$f : 2 \times \Ninfty \to 2 \times \Ninfty$ according to whether
$f(0, \infty) = (0, \infty)$ (untangled case) or $(1, \infty)$ (tangled case).
The meaning of the three shaded areas is that an edge in the graph of $f$ must
always stay in one of the them.
In~\Cref{thm:contbij2Ninfty}, one can build a continuous bijection from two
such continuous injections $f, g$ of the same type by usual Cantor-Bernstein-style
arguments. For the other cases, it is useful to note that if both $f$ and $g$
are tangled, then $f \circ g$ is untangled and still a continuous injection.
}
\label{fig:continj2Ninfty}
\end{figure}
\begin{lemma}
  \label{lem:continjNNinfty}
Assuming classical logic, an injection
$f : 2 \times \Ninfty \to 2 \times \Ninfty$ is continuous if and only if
there is some $(j, n) \in 2 \times \bN$ and continuous injection
$f' : 2 \times \Ninfty \to \Ninfty$ such that $f(i, \underline{n} + x) = (i + j \mod 2 ,f'(i,x))$.
\end{lemma}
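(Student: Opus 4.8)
The plan is to prove the substantive left-to-right direction; the converse is routine, since $\Succinfty$, the projections $\pi_0,\pi_1$, and the map $x \mapsto \underline{n} + x = \Succinfty^{n}(x)$ are all continuous, so any $f$ presented in the stated form is continuous on the tail $\{i\}\times\{y \ge \underline n\}$, and its injectivity there follows from fibrewise injectivity of $f'$ together with injectivity of $\Succinfty^n$. The key topological input I would use throughout is that, classically, $2 \times \Ninfty$ has exactly two non-isolated points, namely $(0,\infty)$ and $(1,\infty)$, whereas every point $(i, \underline{m})$ is isolated, because $\{\underline m\} = \{z \in \Ninfty : z_m = 1\}$ is clopen in $\Ninfty$.

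First I would pin down where $f$ sends the two points at infinity. For fixed $i$, the sequence $(\underline m)_{m}$ converges to $\infty$ in $\Ninfty$, so $(i, \underline m) \to (i, \infty)$ and hence $f(i, \underline m) \to f(i, \infty)$ by continuity. If $f(i,\infty)$ were an isolated point $(i', \underline k)$, then a sequence converging to it would be eventually equal to it, forcing $f(i, \underline m) = (i', \underline k)$ for infinitely many $m$ and contradicting injectivity. Therefore $f(i, \infty) = (i', \infty)$ for some $i' \in 2$. Applying this to $i = 0$ and $i = 1$ and using injectivity of $f$, the two images $f(0,\infty), f(1,\infty)$ are distinct points at infinity, so $\{f(0,\infty), f(1,\infty)\} = \{(0,\infty),(1,\infty)\}$; this yields a single $j \in 2$ with $f(i,\infty) = (i + j \bmod 2, \infty)$ for both values of $i$.

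Next I would localise this behaviour to a tail. Each set $\{i+j\} \times \Ninfty$ is clopen in $2 \times \Ninfty$, so $f^{-1}(\{i+j\}\times\Ninfty)$ is open and contains $(i,\infty)$. Since the sets $\{i\}\times\{y \in \Ninfty : y \ge \underline m\}$ form a neighbourhood basis of $(i,\infty)$, there is $n_i \in \bN$ with $\pi_0(f(i,x)) = i + j$ for all $x \ge \underline{n_i}$. Taking $n = \max(n_0, n_1)$ and noting $\underline n + x \ge \underline n \ge \underline{n_i}$, I can define $f'(i,x) := \pi_1\!\left(f(i, \underline n + x)\right)$, so that by construction $f(i, \underline n + x) = (i + j \bmod 2, f'(i,x))$ for all $i \in 2$ and $x \in \Ninfty$.

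Finally I would check that $f'$ has the required properties. Continuity is immediate, as $f'(i, -)$ is the composite of $x \mapsto \underline n + x$, $f$, and $\pi_1$. For injectivity, if $f'(i, x) = f'(i, x')$ for a fixed $i$, then $f(i, \underline n + x) = f(i, \underline n + x')$, whence $\underline n + x = \underline n + x'$ by injectivity of $f$, and then $x = x'$ since $\Succinfty^n$ is injective; thus $f'(i,-)$ is injective for each $i$. I expect the one point needing care to be exactly here: $f'$ need not be injective as a map on all of $2 \times \Ninfty$ — for $f = \mathrm{id}$ one gets $f'(0, x) = f'(1, x) = x$ — so the injectivity in play is the fibrewise one, which is all that is used (and all that the converse needs). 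The only genuinely non-constructive step is the limit-point argument of the second paragraph, which is why the lemma is stated under classical logic.
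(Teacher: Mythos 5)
Your proof is correct and follows essentially the same route as the paper's: classically the only non-isolated points of $2 \times \Ninfty$ are $(0,\infty)$ and $(1,\infty)$, a continuous injection must send them to non-isolated points (fixing $j$), and local constancy of the first component near infinity (you via the neighbourhood basis of tails $\{i\}\times\{y \ge \underline{m}\}$, the paper via compactness of $\Ninfty$ --- the same content) yields the uniform $n$, after which $f'$ is read off by restriction. Your closing caveat is also well taken: as literally stated the lemma's $f'$ is in general only fibrewise injective (your $f = \mathrm{id}$ example), a point the paper's proof passes over silently, but fibrewise injectivity is all that the converse direction and the later use in \Cref{thm:contbij2Ninfty} require.
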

\begin{proof}
The right-to-left direction is immediate.
For the left-to-right direction, first note that $(0, \infty)$
and $(1, \infty)$ are the only non-isolated points.
For $f$ to be injective, it needs to preserve non-isolated points, so
we necessarily have $f(0, \infty) = (j, \infty)$ and $f(1, \infty) = (1 - j, \infty)$
for some $j \in 2$.

By compactness of $\Ninfty$, there is some $n$ large enough such that
the image of $\{ x \in \Ninfty \mid x \ge \underline{n}\}$ by the map
\[
  \begin{array}{lcl}
    \Ninfty &\longto& 2 \times 2\\
    x &\longmapsto& (\pi_1(f(0,x)), \pi_2(f(1,x)))
  \end{array}
\]
is constantly $(j, 1 - j)$; we can then simply set $f'(i,x) = \pi_2(f(i,x)) - n$.
\end{proof}

\begin{lemma}
  \label{lem:NinftyNinftC0classical}
Assuming classical logic, an injection $f : \Ninfty \to \Ninfty$ is continuous
if and only if $f(\infty) = \infty$.
\end{lemma}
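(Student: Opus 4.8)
The plan is to work directly with the topology on $\Ninfty$, which is the subspace topology inherited from $\Cantor$. Under this topology every $\underline{n}$ is an isolated point, $\infty$ is the unique non-isolated point, and the sets $N_m := \{x \in \Ninfty \mid x \ge \underline{m}\}$ (those sequences whose first $m$ bits are $0$) form a neighborhood basis of $\infty$. Since we reason classically, $\LPO$ is available, so $f(\infty)$ is either $\infty$ or equal to some $\underline{n}$; the entire lemma then reduces to ruling out the latter case under continuity and verifying continuity in the former.

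For the forward direction I would argue by contraposition. Suppose $f(\infty) = \underline{n}$. As $\{\underline{n}\}$ is open, continuity forces $f^{-1}(\{\underline{n}\})$ to be an open set containing $\infty$, hence to contain some basic neighborhood $N_m$. But $N_m$ is infinite, since it contains every $\underline{k}$ with $k \ge m$; thus $f$ would send infinitely many distinct points to the single point $\underline{n}$, contradicting injectivity. Therefore $f(\infty) = \infty$.

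For the converse, assume $f(\infty) = \infty$. Continuity at each isolated point $\underline{k}$ is automatic, so it suffices to check continuity at $\infty$, i.e. that each $f^{-1}(N_m)$ is a neighborhood of $\infty$. The key observation is that, classically, $\{y \in \Ninfty \mid y \not\ge \underline{m}\} = \{\underline{0}, \dots, \underline{m-1}\}$ is a finite set of isolated points; by injectivity of $f$ its preimage is finite, and it contains only elements of $\underline{\bN}$ because $f(\infty) = \infty \ge \underline{m}$ excludes $\infty$. Choosing $N$ strictly larger than every natural occurring in this preimage yields $f(N_N) \subseteq N_m$, so $f^{-1}(N_m) \supseteq N_N$ is indeed a neighborhood of $\infty$. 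Together with the fact that $f^{-1}(\{\underline{k}\})$ is either empty or a single isolated natural (again by injectivity and $f(\infty) = \infty$), this shows that the preimage of every basic open set is open, hence $f$ is continuous.

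I do not expect a serious obstacle here: the whole statement is a routine point-set computation once one records that $\Ninfty$ is a one-point compactification whose only limit point is $\infty$. The single point requiring care is the role of classical logic, which is used exactly to guarantee that $f(\infty)$ is (decidably) either a natural or $\infty$ and that the ``small'' sets $\{y \not\ge \underline{m}\}$ are genuinely finite, both of which are what make the injectivity-versus-infinitude contradiction go through cleanly.
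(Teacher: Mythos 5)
Your proof is correct and follows essentially the same route as the paper: both work directly with the point-set topology on $\Ninfty$ (isolated naturals plus the tail sets as a basis around $\infty$) and use injectivity together with the classical finiteness of the complements of tails to show preimages of basic opens are open. If anything, yours is slightly more complete, since you spell out the forward direction (continuity forces $f(\infty)=\infty$ via the injectivity-versus-infinite-neighborhood contradiction), which the paper's proof leaves implicit.
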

\begin{proof}
The topology on $\Ninfty$ is generated by sets $\{\underline{n}\}$
and $U_n = \{ x \in \Ninfty \mid x > \underline{n}\}$ when $n \in \bN$. It is obvious
that $f^{-1}(\{\underline{n}\})$ is open since $f$ is assumed to be injective,
so it simply remains to show that $f^{-1}(U_n)$ is open. Since $f$ is injective,
there can only be finitely values mapped outside of $U_n$, say all strictly inferior to
$\underline{k}$. So $f^{-1}(U_n)$ is $U_k$ joined with the finitely many values
strictly below $\underline{k}$, hence it is open.
\end{proof}

\begin{theorem}
  \label{thm:contbij2Ninfty}
Assuming classical logic, for any two continuous functions
injections $f, g : \bN + \Ninfty \to \bN + \Ninfty$,
there exists a continuous bijection $h : \bN + \Ninfty \to \bN + \Ninfty$
such that $h \subseteq \bigcup_{m \in \bZ} f \circ (g \circ f)^m$.
\end{theorem}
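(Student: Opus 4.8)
Write $X = 2 \times \Ninfty$ (the object of this subsection). The plan is to work classically and to exploit that $X$ is a compact Hausdorff space whose only non-isolated points are $(0,\infty)$ and $(1,\infty)$. I would first record two structural facts. Since $X$ is compact, the continuous injections $f,g$ are closed embeddings, so $f^{-1}$ and $g^{-1}$ are continuous on their images; and by \Cref{lem:continjNNinfty} each of $f,g$ is, away from a bounded initial segment, a component-shift that is either \emph{untangled} (fixing each $(i,\infty)$) or \emph{tangled} (swapping $(0,\infty)$ and $(1,\infty)$). Secondly, I would reformulate the constraint $h \subseteq \bigcup_{m \in \bZ} f \circ (g \circ f)^m$: writing $\phi = g \circ f$, the right-vertices lying in the same connected component of the bipartite graph $G_{f,g}$ as a left-vertex $x$ are exactly $\{(f \circ \phi^m)(x) \mid m \in \bZ\}$, so the admissible bijections $h$ are \emph{precisely} those that restrict, on each connected component, to an arbitrary bijection between its left-vertices and its right-vertices. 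In particular the four vertices coming from $(0,\infty)$ and $(1,\infty)$ form their own finite component (a $4$-cycle when exactly one of $f,g$ is tangled, two $2$-cycles otherwise), so $h(0,\infty)$ and $h(1,\infty)$ can be fixed independently of everything else; and since continuity is automatic at isolated points, the whole problem reduces to arranging $h$ to be continuous at the two limit points.

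When $f$ and $g$ are of the \emph{same} type (both untangled or both tangled), I would simply run Banach's Cantor--Bernstein partition, taking $C = \bigcup_{k \ge 0} \phi^k\big(X \setminus g(X)\big)$ and setting $h = f$ on $C$ and $h = g^{-1}$ on its complement; this is the special admissible matching using only $m \in \{0,-1\}$. The point is that for same-type $f,g$, the maps $f$ and $g^{-1}$ induce the \emph{same} component-shift on the germ at each $(i,\infty)$: both send a tail of component $i$ into a tail of component $i$ (untangled case) or of component $1-i$ (tangled case), and both send $(i,\infty)$ to the same limit point. Hence, although $C$ and its complement interleave arbitrarily close to $(i,\infty)$, for every large enough $x$ in component $i$ the value $h(x)$ lands in the prescribed component and tends to $\infty$; so $h$ is continuous at both limit points by the usual ``two continuous maps agreeing in the limit'' argument, and this case is done.

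The genuine difficulty, and the main obstacle, is the \emph{mixed} case where exactly one of $f,g$ is tangled. Here $f$ and $g^{-1}$ disagree at the limit points—$f$ fixes $(i,\infty)$ while $g^{-1}$ swaps it—so the naive Cantor--Bernstein bijection is discontinuous, sending some component-$i$ points near infinity into component $i$ and others into component $1-i$. This is exactly where the extra freedom identified in the first paragraph must be used: instead of matching each left-vertex to a graph-neighbour, I would match the tails of the two components directly. Having fixed $h$ on the limit $4$-cycle (say to the untangled choice), I would choose, within each connected component, a bijection between its left- and right-vertices that pairs a cofinite part of each component-$i$ tail of left-vertices with a cofinite part of the component-$i$ tail of right-vertices in the same component. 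The composition remark recorded in \Cref{fig:continj2Ninfty}—that the composite of two tangled injections is untangled—is what makes this feasible: since $\phi = g \circ f$ is tangled, each $\phi$-thread alternates between the two components and therefore meets both tails cofinitely, which lets me transport the bookkeeping back to the already-solved untangled situation. The remaining, bounded discrepancy between the tails is finite and can be absorbed by a finite back-and-forth as in \Cref{lem:subfinite}.

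I expect the hardest part to be verifying that this tail-matching can be carried out \emph{simultaneously} as a single global bijection of $X$ that is continuous at both limit points and has continuous inverse—that is, that the cofinite pairings chosen in the (generally infinite) components, together with the finite correction, assemble into one bijection with no point left unmatched. Once this bookkeeping is in place, continuity at the limit points follows exactly as in the same-type case, and membership in $\bigcup_{m \in \bZ} f \circ (g \circ f)^m$ is automatic, since every matched pair lies in a common component of $G_{f,g}$.
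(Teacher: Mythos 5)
Your same-type case is sound and is essentially the paper's first case: a Cantor--Bernstein bijection $h \subseteq f \cup g^{-1}$, made continuous at the two limit points because $f$ and $g^{-1}$ both send $(i,\infty)$ to the same limit point. Your ``two continuous maps agreeing in the limit'' argument works (note $g^{-1}$ is continuous on the closed set $g(X)$, and $h^{-1}$ is then automatically continuous since the space is compact Hausdorff, so you even bypass the explicit modulus computation via \Cref{lem:continjNNinfty} that the paper performs). Your reformulation of the constraint $h \subseteq \bigcup_{m \in \bZ} f \circ (g \circ f)^m$ as ``an arbitrary bijection between left- and right-vertices within each connected component'' is also correct, as is the observation that the four limit vertices form their own finite component.

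The gap is the mixed case, and you flag it yourself. Your tail-matching plan requires choosing, in infinitely many connected components simultaneously, eventually-component-preserving pairings together with finitely many ``corrections'' per component; but continuity at $(i,\infty)$ is a \emph{uniform} statement --- for every $k$ there must be a $K$ such that $h$ maps all of $\{i\}\times\{x \mid x \ge \underline{K}\}$ into the $k$-th basic neighbourhood of the target limit point --- and nothing in your sketch bounds the corrections across the infinitely many components meeting the tails. This is exactly the assembly step you say you expect to be hardest, and it is left unproved, so the argument is incomplete as written. The paper sidesteps the entire difficulty with a one-line reduction you missed: in the mixed case, replace $g$ by $g \circ f \circ g$, which is still a continuous injection and always has the same tangling type as $f$ (its type is $j_g + j_f + j_g \equiv j_f \bmod 2$). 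Running the same-type argument on the pair $(f,\, g \circ f \circ g)$ then yields a continuous bijection
\[
h \;\subseteq\; \bigcup_{m \in \bZ} f \circ (g \circ f \circ g \circ f)^m \;=\; \bigcup_{m \in \bZ} f \circ (g \circ f)^{2m},
\]
which satisfies the required containment. The price is that this $h$ follows $f$'s type at the limit points rather than your preferred ``untangled choice,'' but nothing in the statement requires the latter. I recommend replacing your mixed-case construction with this reduction; if you insist on the direct per-component construction, you must actually prove the uniform-continuity bookkeeping you currently only conjecture.
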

\begin{proof}
First let us treat the case where $f(0,\infty) = g(0, \infty) = (j, \infty)$
for some $j \in 2$.
Using a suitable variant of the Cantor-Bernstein theorem~\cite{Banaschewski-1986},
there exists a bijection $h \subseteq f \cup g^{-1}$.
By~\Cref{lem:continjNNinfty}, we also have $n \in \bN$ large enough so that there exists
$j_f, j_g \in 2$ and $f', g' : 2 \times \Ninfty \to \Ninfty$ such that
$f(i, \underline{n} + x) = (i + j \mod 2, f'(i, x))$ and
$g(i, \underline{n} + x) = (i + j \mod 2, g'(i,x))$ for every $x \in \Ninfty$.
Since $g$ is injective, there exists some $m \in \bN$ such that,
for any $i \in 2$, if $g(i, x) = (i', y)$ with $y \ge \underline{n + m}$,
then $x \ge \underline{n}$ (and therefore, $i = i'$).
Now we define $h' : 2 \times \Ninfty \to \Ninfty$ by
\[h'(i, x) = \left\{ \begin{array}{lc}
f'(i,x) & \text{if $h(i, \underline{n + m} + x) = f(i, \underline{n + m} + x) = (i + j \mod 2, f'(\underline{m} + i, x))$} \\
y - n& \text{if otherwise $h(i, \underline{n + m} + x)) = g^{-1}(i, \underline{n + m} + x) = (i, y)$}
\end{array} \right.
\]
Due to the way we picked $n, m \in \bN$, we can check that we do have that
$h(i, \underline{n + m} + x) = (i + j \mod 2, h'(i, x))$.
We also have that $h'(i,x) = \infty \Leftrightarrow x = \infty$, so $h'$ is
continuous by~\Cref{lem:NinftyNinftC0classical}, and by~\Cref{lem:continjNNinfty}
$h$ is continuous.

Now if $f(0, \infty) = (j, \infty)$ and $g(0, \infty) = (1 - j, \infty)$,
we have that $(g \circ f \circ g)(j, \infty) = (j, \infty)$ and $g \circ f \circ g$
remains injective. So if we use $g \circ f \circ g$ instead of $g$ in our
previous argument, we obtain a continuous
$h \subseteq \bigcup_{m \in \bZ} f \circ (g \circ f \circ g \circ f)^m$, which
is enough as $f \circ (g \circ f \circ g \circ f)^m = f \circ (g \circ f)^{2m}$.
\end{proof}

This has for direct consequence that a doubly-negated version of the strong Myhill
property holds in Kleene-Vesley realizability.

\begin{corollary}
\label{cor:dneg2NinftyMP}
The following is valid in Kleene-Vesley realizability:
given two injections $f, g : 2 \times \Ninfty \to 2 \times \Ninfty$, there $\dneg$-exists
a bijection $h : 2 \times \Ninfty \cong 2 \times \Ninfty$ such that
\[\forall x \in 2 \times \Ninfty. \;
  \neg\neg \left(h(x) \in \bigcup_{m \in \bZ} \left(f \circ (g \circ f)^m\right)(x)\right)\]
\end{corollary}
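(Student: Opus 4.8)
The plan is to argue meta-theoretically inside the Kleene--Vesley topos, exploiting that both double negations let us defer every non-constructive choice to the classical meta-theory. Two facts drive the argument. First, every morphism $2 \times \Ninfty \to 2 \times \Ninfty$ in this topos is represented by a continuous function, since $2 \times \Ninfty$ is a partitioned modest set and morphisms are tracked by codes in $\mathcal{K}_2$; internal injections are in particular represented by injective continuous functions. Second, for a closed formula $\phi$ the statement $\dneg\phi$ is realized by a fixed trivial realizer exactly when $\phi$ is itself realizable, because $\neg\phi$ has realizers only when $\phi$ has none. It is this second fact that makes the outer $\dneg$ do the heavy lifting: it absorbs the discontinuous tangled-versus-untangled case distinction underlying \Cref{thm:contbij2Ninfty}.

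Concretely, I would unfold the target to $\forall f, g. \; \dneg \exists h. \; \Phi(f,g,h)$, where $\Phi$ abbreviates ``$h$ is a bijection and $\forall x. \; \dneg\bigl(h(x) \in \bigcup_{m \in \bZ}(f \circ (g \circ f)^m)(x)\bigr)$''. To realize this it suffices, by the second fact, to show that for every pair of continuous injections $f, g$ the bare statement $\exists h. \; \Phi(f,g,h)$ is realizable; the realizer for the whole formula then ignores its inputs $e_f, e_g$ and returns the trivial realizer of the $\dneg$. In particular it never computes $h$ from $f$ and $g$, which is essential since no such uniform computation can exist.

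To supply $h$ I would invoke \Cref{thm:contbij2Ninfty}: the meta-theory is classical and $f, g$ are continuous injections, so the theorem yields a continuous bijection $h$ with $h \subseteq \bigcup_{m \in \bZ} f \circ (g \circ f)^m$. Continuity of $h$ and $h^{-1}$ means both are tracked by codes, so $h$ is a genuine bijection in the topos. The graph inclusion gives, for each $x$, some $m \in \bZ$ with $h(x) = (f \circ (g \circ f)^m)(x)$, so each membership $h(x) \in \bigcup_{m \in \bZ}(f \circ (g \circ f)^m)(x)$ holds; its double negation is then realized by the trivial realizer uniformly in $x$, yielding a realizer of the inner $\forall x. \; \dneg(\ldots)$. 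Packaging this with the codes for $h$ and $h^{-1}$ produces a realizer of $\Phi(f,g,h)$, as required.

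I expect the main obstacle to be conceptual rather than computational: one must articulate cleanly the two distinct roles of the double negations and verify that they discharge the two genuine sources of non-constructivity. The outer $\dneg$ removes the obligation to select $h$ continuously in $(f,g)$, since deciding the tangled/untangled alternative of \Cref{lem:continjNNinfty} is an $\LPO$-flavoured choice that cannot be made continuously; the inner $\dneg$ removes the obligation to compute, for each $x$, the index $m$ witnessing $h(x) = (f \circ (g \circ f)^m)(x)$, which can jump discontinuously near $\infty$. Once it is granted that $\dneg$-statements are realized precisely when the bare statement holds in the model, no further combinatorics beyond \Cref{thm:contbij2Ninfty} is needed.
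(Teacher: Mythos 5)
Your proposal is correct and matches the paper's intended argument: the corollary is presented as a direct consequence of \Cref{thm:contbij2Ninfty}, exactly because internal injections on the partitioned modest set $2 \times \Ninfty$ are tracked by continuous codes, the classical meta-theory then supplies the continuous bijection $h$ (with $h^{-1}$ continuous too, by compactness), and both occurrences of $\dneg$ are trivially and uniformly realized without ever computing $h$ from $e_f, e_g$. One small point of precision: in your second fact, ``$\phi$ is realizable'' must mean ``$\phi$ has a \emph{potential} realizer in $\mathcal{K}_2$,'' not a computable one in $\mathcal{K}_2^{\mathrm{rec}}$ --- this is what lets the merely continuous, possibly non-computable $h$ witness the $\dneg$-existential in the relative realizability topos, and it is in fact how you use the claim.
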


We now turn to showing that $2 \times \Ninfty$ does \emph{not} have the $\dneg$
Myhill property in Kleene-Vesley realizability. The key idea is that, while any continuous
$f$ and $g$ admit a continuous solution, that solution cannot be computed
continuously from codes for $f$ and $g$ in the setting of type 2
computability\footnote{Specifically, a code for a partial function
$f : \Baire \to \bN$ is given by a tree whose leaves are labeled by outputs
in $\bN$. $f(p)$ is then defined if $p$ is a path leading to a leaf labeled
by $n$, see~\cite[\S 1.4.2]{VanOosten}.}.

\begin{theorem}
  \label{thm:noNNinftyMP}
Assuming classical logic, there is no continuous functional that takes codes for two continuous injections $f, g : 2 \times \Ninfty \to 2 \times \Ninfty$
to a code for a bijection $h : 2 \times \Ninfty \to 2 \times \Ninfty$ and its inverse so that $h \subseteq \bigcup_{m \in \bZ} f \circ (g \circ f)^m$.
\end{theorem}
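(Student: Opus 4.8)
The plan is to argue by contradiction from continuity, exploiting that a continuous functional $F$ must commit its output after reading only a finite prefix of the input codes. The quantity I would track is a single bit: the $2$-valued coordinate $\pi(h(p_0)) \in 2$ of the image of a fixed \emph{isolated} point, say $p_0 = (0, \underline 0)$, where $\pi : 2 \times \Ninfty \to 2$ is the first projection. Since $p_0$ and the first symbol of $h(p_0)$ are read and written at finite depth, and a genuinely continuous solution $h$ is guaranteed to exist by \Cref{thm:contbij2Ninfty}, the assignment sending the codes of $f,g$ to $\pi(h(p_0))$ factors as $F$ followed by evaluation of the resulting code at $p_0$; it is therefore a continuous partial map into the discrete set $2$, and its value is fixed by finite prefixes of the input codes. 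The task reduces to producing inputs on which this forced bit behaves discontinuously.

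To this end I would fix one injection $f$ and build a sequence $g_N$ of continuous injections converging, as codes, to a limit $g_\infty$, with $g_N$ and $g_\infty$ agreeing below height $N$, engineered so that the forced value of $\pi(h(p_0))$ oscillates with $N$. The key leverage is that $f$ may route the isolated point $p_0$ deep into $\Ninfty$ — continuous injections are entirely unconstrained on isolated points, a freedom already used in \Cref{thm:nninftynotmyhill} — so that the connected component of $p_0$ in the graph $\bigcup_{m \in \bZ} f \circ (g_N \circ f)^{m}$ is a \emph{ray} whose unique finite endpoint sits near height $N$, produced by a single ``defect'' that $g_N$ introduces there (a local omission or copy-crossing of the kind classified in \Cref{lem:continjNNinfty}). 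On a ray the Cantor--Bernstein matching is unique, so $h(p_0)$ is genuinely forced along this component; and because the position of $p_0$ along the ray, together with the copy of its matched neighbour, depends on the parity of the distance to the endpoint, shifting the defect from height $N$ to height $N+1$ flips $\pi(h(p_0))$. On the limit input $(f, g_\infty)$ — the defect pushed off to infinity, so that $g_\infty$ is an honest untangled shift with no finite endpoint — the functional $F$ merely has to return \emph{some} definite bit.

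Finally I would invoke continuity: $F$ applied to $(f, g_\infty)$ produces the finite piece of the code of $h_\infty$ relevant to $p_0$ after reading the input codes only to some finite depth $D$; taking $N > D$, the codes of $g_N$ and $g_\infty$ agree on everything $F$ inspects, so $F$ returns the same output and hence $\pi(h_N(p_0)) = \pi(h_\infty(p_0))$ for all large $N$. This contradicts the oscillation of $\pi(h_N(p_0))$. The main obstacle, and where essentially all the work lies, is the middle step: designing the family $g_N$ so that it simultaneously (i) converges to a total continuous injection $g_\infty$ as codes, which forces the defect to recede to infinity; (ii) keeps the component of $p_0$ a ray, so that $h(p_0)$ is \emph{uniquely} forced rather than being one of the two admissible matchings on a bi-infinite chain; and (iii) makes the forced bit genuinely alternate as $N$ grows. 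Checking that these three demands are compatible — and that each $g_N$, together with its extension, is a legitimate continuous injection with the correct behaviour at $\infty$ — is the crux of the proof.
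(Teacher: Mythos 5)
Your overall skeleton --- a family of input pairs converging in codes, a discrete quantity read off continuously from the output code, and a contradiction between local constancy and the behaviour of that quantity along the family --- is exactly the paper's, and your continuity step is sound. But the crux step, item (iii), fails for a parity reason, and it fails irreparably under your standing decision to \emph{fix} $f$ and move a single defect of $g$. Consider the bipartite graph on two copies $L,R$ of $2\times\Ninfty$ with edges $L_v \mathbin{\text{--}} R_{f(v)}$ and $L_{g(w)} \mathbin{\text{--}} R_w$. Its degree-one vertices are exactly the $L_v$ with $v\notin \mathrm{im}(g)$ and the $R_w$ with $w\notin \mathrm{im}(f)$. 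With $f$ frozen, every endpoint your defect can create or move is a \emph{left} endpoint. If the component of $p_0$ is a ray with left endpoint $e$, then bipartiteness forces the distance from $e$ to $L_{p_0}$ to be \emph{even}, whatever the height of the defect; and the unique matching on a ray pairs each vertex at even distance from the endpoint with its neighbour \emph{away} from the endpoint. Since $g_N$ agrees with $g_\infty$ below height $N$, that away-neighbour is literally the same vertex of the fixed bottom portion for all large $N$. Hence $\pi(h_N(p_0))$ is \emph{constant} in $N$: there is no oscillation, and your claim that shifting the defect from height $N$ to $N+1$ flips the bit is wrong --- one height step moves the endpoint \emph{two} steps along the alternating path ($f$-edge plus $g$-edge), preserving parity. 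Worse, the constant value forced at all finite stages extends to one of the two legal matchings of the limiting two-way chain, so the functional may simply output that matching at $g_\infty$, and no single-mismatch fallback is available either. The only way to flip the forced alignment at $p_0$ is to make the lone degree-one vertex alternate \emph{sides} of the bipartition, i.e.\ alternate between a point outside $\mathrm{im}(g)$ and one outside $\mathrm{im}(f)$ --- which a receding defect of $g$ alone can never do.

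This is precisely what the paper's construction arranges, and why it varies \emph{both} injections: the family $f_x, g_x$ indexed by $x \in \Ninfty$ uses the interleaved break conditions ($2y+1 < x$ in $f_x$, $2y < x$ in $g_x$), so that stepping the parameter from $\underline{2m}$ to $\underline{2m+1}$ moves the break by a half-step and the defect alternates between living in $g_x$ and in $f_x$. The paper also does not force a single bit at one point: it first stabilizes, by continuity and compactness, the first components of $H$ and $H_{\mathrm{inv}}$ above some $\underline{n}$ and then their values on the finite set $A_n$ up to some height $\underline{m}$, and derives the contradiction by a pigeonhole count comparing $|S_L|$ and $|S_R|$ at the two parities $\underline{2m}$ and $\underline{2m+1}$; this counting sidesteps the delicate task of keeping the component of one distinguished point a ray throughout. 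Your single-bit strategy can likely be repaired, but only by letting the defect alternate between $f$ and $g$ as in the paper's family (or by re-routing the component onto one of the finitely many fixed right endpoints of $f$, a considerably more delicate construction than the one you describe); as written, demands (i)--(iii) in your plan are mutually inconsistent.
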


The informal argument is illustrated in~\Cref{fig:noNNinftyMP}.
\begin{figure}
\[\begin{tikzcd}[cramped, column sep=small]
	&&&&&&& \vdots && \vdots && \vdots && \vdots \\
	&&&&&&& {\underline{5}} && {\underline{5}} && {\underline{5}} && {\underline{5}} \\
	&&& \vdots && \vdots & {} & {{{\underline{4}}}} && {\underline{4}} && {{{\underline{4}}}} && {\underline{4}} & {} \\
	&&& {\underline{3}} && {\underline{3}} & {} & {\underline{3}} && {\underline{3}} && {\underline{3}} && \textcolor{rgb,255:red,92;green,92;blue,214}{{\underline{3}}} & {} \\
	&& {\phantom{\vdots}} & {\underline{2}} && {\underline{2}} && \textcolor{rgb,255:red,92;green,92;blue,214}{{\underline{2}}} && {\underline{2}} && \textcolor{rgb,255:red,92;green,92;blue,214}{{\underline{2}}} && {\underline{2}} & {\phantom{\vdots}} \\
	{\{1\}\times\mathbb{N}_\infty} & \vdots & \vdots & {\underline{1}} && {\underline{1}} && \textcolor{rgb,255:red,92;green,92;blue,214}{{\underline{1}}} && \textcolor{rgb,255:red,92;green,92;blue,214}{{\underline{1}}} && \textcolor{rgb,255:red,92;green,92;blue,214}{{\underline{1}}} && \textcolor{rgb,255:red,92;green,92;blue,214}{{\underline{1}}} & {\phantom{\vdots}} & {\phantom{\mathbb{N}_\infty}} \\
	& {\underline{0}} & {\underline{0}} & {\underline{0}} && {\underline{0}} && \textcolor{rgb,255:red,92;green,92;blue,214}{{\underline{0}}} && \textcolor{rgb,255:red,92;green,92;blue,214}{{\underline{0}}} && \textcolor{rgb,255:red,92;green,92;blue,214}{{\underline{0}}} && \textcolor{rgb,255:red,92;green,92;blue,214}{{\underline{0}}} \\
	& \underline0 & \underline0 & \underline0 && \underline0 && \underline0 && \underline0 && \underline0 && \underline0 \\
	{\{0\}\times\mathbb{N}_\infty} & \vdots & \vdots & \vdots && \vdots && \vdots && \vdots && \vdots && \vdots && {\phantom{\mathbb{N}_\infty}} \\
	& {} & {} & {} && {} && {} && {} & \textcolor{rgb,255:red,214;green,0;blue,14}{{\text{or}}} & {} && {}
	\arrow[from=2-8, to=2-10]
	\arrow[dashed, no head, from=2-10, to=1-8]
	\arrow[from=2-12, to=2-14]
	\arrow[dashed, no head, from=2-14, to=1-12]
	\arrow[draw=none, from=3-4, to=4-6]
	\arrow[""{name=0, anchor=center, inner sep=0}, draw=none, from=3-7, to=4-7]
	\arrow[from=3-8, to=3-10]
	\arrow[from=3-10, to=2-8]
	\arrow[from=3-12, to=4-14]
	\arrow[from=3-14, to=2-12]
	\arrow[""{name=1, anchor=center, inner sep=0}, draw=none, from=3-15, to=4-15]
	\arrow[from=4-4, to=4-6]
	\arrow[from=4-4, to=5-6]
	\arrow[from=4-8, to=5-10]
	\arrow[from=4-10, to=4-8]
	\arrow[from=4-12, to=5-14]
	\arrow[from=4-14, to=4-12]
	\arrow[""{name=2, anchor=center, inner sep=0}, draw=none, from=5-3, to=6-3]
	\arrow[from=5-4, to=6-6]
	\arrow[from=5-6, to=5-4]
	\arrow[from=5-8, to=6-10]
	\arrow[from=5-10, to=5-8]
	\arrow[from=5-12, to=6-14]
	\arrow[from=5-14, to=5-12]
	\arrow[""{name=3, anchor=center, inner sep=0}, draw=none, from=5-15, to=6-15]
	\arrow[""{name=4, anchor=center, inner sep=0}, draw=none, from=6-1, to=9-1]
	\arrow[from=6-4, to=7-6]
	\arrow[from=6-6, to=6-4]
	\arrow[from=6-8, to=7-10]
	\arrow[from=6-10, to=6-8]
	\arrow[from=6-12, to=7-14]
	\arrow[from=6-14, to=6-12]
	\arrow[""{name=5, anchor=center, inner sep=0}, draw=none, from=6-16, to=9-16]
	\arrow[from=7-2, to=8-3]
	\arrow[from=7-3, to=7-2]
	\arrow[from=7-4, to=8-6]
	\arrow[from=7-6, to=7-4]
	\arrow[from=7-8, to=8-10]
	\arrow[from=7-10, to=7-8]
	\arrow[from=7-12, to=8-14]
	\arrow[from=7-14, to=7-12]
	\arrow[draw=none, from=8-2, to=7-2]
	\arrow[dashed, no head, from=8-2, to=9-3]
	\arrow[from=8-3, to=8-2]
	\arrow[draw=none, from=8-4, to=7-4]
	\arrow[dashed, no head, from=8-4, to=9-6]
	\arrow[from=8-6, to=8-4]
	\arrow[draw=none, from=8-8, to=7-8]
	\arrow[dashed, no head, from=8-8, to=9-10]
	\arrow[from=8-10, to=8-8]
	\arrow[draw=none, from=8-12, to=7-12]
	\arrow[dashed, no head, from=8-12, to=9-14]
	\arrow[from=8-14, to=8-12]
	\arrow["{{{\text{Stage 0}}}}"{description}, draw=none, from=10-2, to=10-3]
	\arrow["{{{\text{Stage 1}}}}"{description}, draw=none, from=10-4, to=10-6]
	\arrow["{{{\text{Stage 2a}}}}"{description}, draw=none, from=10-8, to=10-10]
	\arrow["{{{\text{Stage 2b}}}}"{description}, draw=none, from=10-12, to=10-14]
	\arrow["{{{\text{$(h, h^{-1})$ defined below $\squiggle$}}}}"{description}, equals, squiggly, from=0, to=1]
	\arrow["{{{\text{$h$ no longer goes underground}}}}"{description}, squiggly, no head, from=2, to=3]
	\arrow["{{{\text{ground level}}}}"{description, pos=0.1}, equals, from=4, to=5]
\end{tikzcd}\]

\caption{Informal illustration of the proof
of~\Cref{thm:noNNinftyMP} where $h$ designates partial outputs from the
would-be procedure that should produce a bijection compatible with $f$ and $g$.
The first components of elements of $2 \times \Ninfty$ are omitted,
with the convention that elements of $\{0\} \times \Ninfty$ sit in the bottom part
of the picture. First, through stage 0 and 1, we feed information consistent with
$(f_\infty,g_\infty)$ until we find some $n \in \bN$ such that
$\pi_1(h(0,y)) = \pi_1(h^{-1}(0,y) = 0$ for any $y \ge \underline{n}$
(in this picture, $n = 2$).
Then we find $m > n$ large enough so that the values of $h$ and $h^{-1}$
over $A = \{1\} \times \{\underline{0}, \ldots, \underline{n-1}\}$ are the same
for any $(f_x, g_x)$ with $x \ge \underline{m}$ and those values do
not exceed $(1, \underline{m})$ (in the picture, $m = 3$).
Then according to whether
$|h(A) \cap \{1\} \times\{\underline{n}, \ldots, \underline{m}\}|$ and
$|h^{-1}(A) \cap \{1\} \times\{\underline{n}, \ldots, \underline{m}\}|$
are different or the same, we move to stage 2a or 2b respectively and derive a contradiction by
the finite pigeonhole principle.}
\label{fig:noNNinftyMP}
\end{figure}

\begin{proof}
In this proof, we will only consider a family of pairs of continuous injections $f_x, g_x : 2 \times \Ninfty \to 2 \times \Ninfty$ indexed by $x \in \Ninfty$. Intuitively, $f_x$ and $g_x$ are defined so that the induced bipartite graph will have a single connected component if $x = \infty$ or be broken at a specific step if $x \in \underline{\bN}$ (see also~\Cref{fig:brokenLadder} for a picture). We may define those maps formally by
\[
\begin{array}{llcl !\quad llcl}
f_x : & 2 \times \Ninfty& \longto & 2 \times \Ninfty &
g_x : & 2 \times \Ninfty& \longto & 2 \times \Ninfty \\
& (1, \underline{0}) &\longmapsto & (0, \underline{0})
\\
& (0, y) &\longmapsto & (0, y + \underline{1}))
&
& (0, y) &\longmapsto & (0, y)\\
& (1, y + \underline{1}) &\longmapsto& 
(1, y)  ~~\text{if $2y + \underline{1} < x$}
&
&(1, y) &\longmapsto& 
(1, y) ~~\text{if $2y < x$}\\
&&&
(1, y + \underline{1}) ~~ \text{otherwise} & & & &
(1, y + \underline{1})~~ \text{otherwise}\\
\end{array}
\]
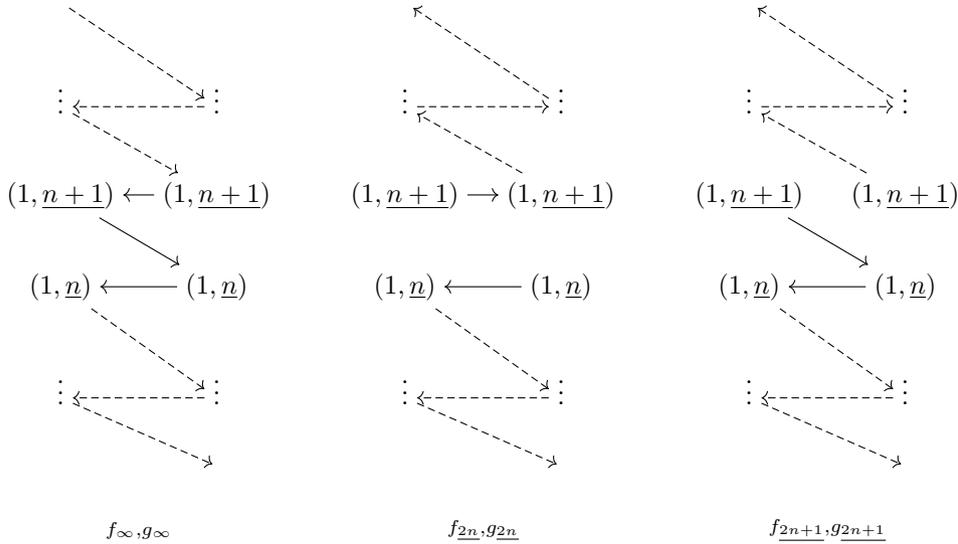
\begin{figure}
\[\begin{tikzcd}[cramped,column sep=small]
	{} & {\phantom{\vdots}} && {} & {\phantom{\vdots}} && {} & {\phantom{\vdots}} &&&&&& {\phantom{\vdots}} &&& {\phantom{\vdots}} &&& {\phantom{\vdots}} \\
	\vdots & \vdots && \vdots & \vdots && \vdots & \vdots &&&&&& {\phantom{\vdots}} & {\phantom{\mathbb{N}_\infty}} && {\phantom{\vdots}} & {\phantom{\mathbb{N}_\infty}} && {\phantom{\vdots}} & {\phantom{\mathbb{N}_\infty}} \\
	{(1,\underline{n+1})} & {(1,\underline{n+1})} && {(1,\underline{n+1})} & {(1,\underline{n+1})} && {(1,\underline{n+1})} & {(1,\underline{n+1})} \\
	{(1,\underline{n})} & {(1, \underline{n})} && {(1,\underline{n})} & {(1, \underline{n})} && {(1,\underline{n})} & {(1, \underline{n})} \\
	\vdots & \vdots && \vdots & \vdots && \vdots & \vdots &&&&&&& {\phantom{\mathbb{N}_\infty}} &&& {\phantom{\mathbb{N}_\infty}} &&& {\phantom{\mathbb{N}_\infty}} \\
	& {} &&& {} &&& {} \\
	{} & {} && {} & {} && {} & {}
	\arrow[dashed, from=1-1, to=2-2]
	\arrow[draw=none, from=1-2, to=2-2]
	\arrow[draw=none, from=1-5, to=2-5]
	\arrow[draw=none, from=1-8, to=2-8]
	\arrow[draw=none, from=1-14, to=2-14]
	\arrow[draw=none, from=1-17, to=2-17]
	\arrow[draw=none, from=1-20, to=2-20]
	\arrow[dashed, from=2-1, to=3-2]
	\arrow[dashed, from=2-2, to=2-1]
	\arrow[dashed, from=2-4, to=2-5]
	\arrow[dashed, from=2-5, to=1-4]
	\arrow[dashed, from=2-7, to=2-8]
	\arrow[dashed, from=2-8, to=1-7]
	\arrow[draw=none, from=2-15, to=5-15]
	\arrow[draw=none, from=2-18, to=5-18]
	\arrow[draw=none, from=2-21, to=5-21]
	\arrow[from=3-1, to=4-2]
	\arrow[from=3-2, to=3-1]
	\arrow[from=3-4, to=3-5]
	\arrow[dashed, from=3-5, to=2-4]
	\arrow[from=3-7, to=4-8]
	\arrow[dashed, from=3-8, to=2-7]
	\arrow[draw=none, from=4-1, to=3-1]
	\arrow[dashed, from=4-1, to=5-2]
	\arrow[from=4-2, to=4-1]
	\arrow[draw=none, from=4-4, to=3-4]
	\arrow[dashed, from=4-4, to=5-5]
	\arrow[from=4-5, to=4-4]
	\arrow[draw=none, from=4-7, to=3-7]
	\arrow[dashed, from=4-7, to=5-8]
	\arrow[from=4-8, to=4-7]
	\arrow[dashed, from=5-1, to=6-2]
	\arrow[dashed, from=5-2, to=5-1]
	\arrow[dashed, from=5-4, to=6-5]
	\arrow[dashed, from=5-5, to=5-4]
	\arrow[dashed, from=5-7, to=6-8]
	\arrow[dashed, from=5-8, to=5-7]
	\arrow["{f_\infty, g_\infty}"{description}, draw=none, from=7-1, to=7-2]
	\arrow["{f_{\underline{2n}}, g_{\underline{2n}}}"{description}, draw=none, from=7-4, to=7-5]
	\arrow["{f_{\underline{2n+1}}, g_{\underline{2n+1}}}"{description}, draw=none, from=7-7, to=7-8]
\end{tikzcd}\]
\caption{Picturing graphs spanned by $(f_x, g_x)$. If $x = \infty$, we have a single connected component,
otherwise if $x = \underline{m}$ we have a break around $(1, \underline{\left\lfloor\frac{m}{2}\right\rfloor})$.}
\label{fig:brokenLadder}
\end{figure}
Since canonical codes for $f_x$ and $g_x$ can be uniformly computed from $x$, our assumption
tells us that there exist continuous maps
\[H, H_{\mathrm{inv}} ~~:~~ \Ninfty \times (2 \times \Ninfty) \longto 2 \times \Ninfty\]
such that $H(x, -)$ and $H_{\mathrm{inv}}(x,-)$ are mutually inverse bijections and such that
\[\forall (i, y) \in 2 \times \Ninfty. ~ \neg\neg \exists m \in \bZ. \; H(x,(i,y)) = \left(f_x \circ (g_x \circ f_x)^m\right)(i,y) \]
Let us show that this is absurd. First, observe that the following map is continuous and has a discrete codomain.
\[
\begin{array}{llcl}
H_{\mathrm{ground}} :& \Ninfty \times \Ninfty &\longto& 2 \times 2\\
& (x, y) & \longmapsto& (\pi_1(H(x,(1,y))),\pi_1(H_{\mathrm{inv}}(x,(1,y))))\\
\end{array}
\]
So in particular, there is some $n \in \bN$ such that $H_{\mathrm{ground}}(x, y)$ always takes the same value
for any $x, y \in \Ninfty$ above $\underline{n}$. Call $A_n$ the finite set $\{1\} \times \{\underline{0}, \ldots, \underline{n-1}\} \subseteq 2 \times \Ninfty$. Now we want to show that there is some $m > n$ such that
\[ \forall z \in A_n. \; \forall y \ge \underline{m}. ~~
\sup(\pi_2(H(\infty, z)),\pi_2(H_{\mathrm{inv}}(\infty, z))) \le \underline{m}
\quad \text{and} \quad
\left[\begin{array}{lcl}
H(\infty,z) &=& H(y,z)\\
H_{\mathrm{inv}}(\infty,z) &=& H_{\mathrm{inv}}(y,z)
\end{array}\right.
\]
Since $A_n$ is finite, it is enough to find a bound for each individual $z \in A_n$ and then take a maximum; we can also treat $H$ and $H_{\mathrm{inv}}$ separately that way. Also for each $z \in A_n$, we already know that $\pi_2(H(\infty, z)) \in \underline{\bN}$ since $H(\infty, -)$ must be compatible with $(f_\infty,g_\infty)$,
so bounding that is no issue either.
So we can focus on finding $m_z$ such that $H(\infty,z) = H(y,z)$ for all $y \ge \underline{m_z}$.
Since $\{z\}$ and $\{H(\infty,z)\}$  are clopens in $2 \times \Ninfty$,
the preimage of $H(\infty, z)$ intersected with $\Ninfty \times \{z\}$ is a clopen which contains $(\infty, z)$.
But then it means that the first component is necessarily cofinite; any witness of cofiniteness is a suitable $m_z$.

Now, let us consider the following two sets:
\[
S_L = H(\infty,A_n) ~~\cap~~ \{1\} \times \{\underline{n}, \ldots, \underline{m}\}
\qquad\text{and}\qquad
S_R = H_{\mathrm{inv}}(\infty,A_n) ~~\cap~~ \{1\} \times \{\underline{n}, \ldots, \underline{m}\}
\]
We now make a case distinction according to whether $|S_L| = |S_R|$ or not:
\begin{itemize}
\item if $|S_L| \neq |S_R|$, we run into a contradiction when checking that $H(\underline{2m},-)$ is
a suitable bijection (stage 2a in~\Cref{fig:noNNinftyMP}): at this stage $H$ and $H_{\mathrm{inv}}$
must produce outputs whose first component is $1$ and should thus induce a bijection between
the complements of $S_L$ and $S_R$ in $\{\underline{n}, \ldots, \underline{m}\}$, which is impossible
since they have different size $|S_L| - m + n$ and $|S_R| - m + n$.
\item otherwise, we run into a contradiction when checking that $H(\underline{2m+1},-)$ works: if it did,
it would induce a bijection between $\{\underline{n}, \ldots, \underline{m}, \underline{m+1}\} \setminus S_L$
and $\{\underline{n}, \ldots, \underline{m}\} \setminus S_R$, which is impossible since their sizes differ by one.
\end{itemize}
\end{proof}

Now that we know it is impossible to uniformly compute a suitable bijection, we can
still ask if we can sharpen~\Cref{thm:contbij2Ninfty} to gauge how non-computable this
problem is.
Here we sketch how $\dneg$ could be replaced with $\modal_{\LPO^\diamond}$
in the statement of~\Cref{thm:contbij2Ninfty}\footnote{
Recalling that modalities can essentially be defined for any Weihrauch problem as per~\Cref{rem:modWei},
with a bit more work, one can replace $\LPO^\diamond$ with $\LLPO^* \star \LPO^8$ in~\Cref{conj:2NinftyModLPOdia};
if we further assumed $f$ and $g$ to be continuous, $\LLPO^*$ would suffice.}.

\begin{claim}
  \label{conj:2NinftyModLPOdia}
Assuming $\MP$, given two injections $f, g : 2 \times \Ninfty \to 2 \times \Ninfty$,
there $\modal_{\LPO^\diamond}$-exists a bijection $h : 2 \times \Ninfty \to 2 \times \Ninfty$
such that
\[\forall x \in 2 \times \Ninfty. \;
\modal_{\LPO} \left(h(x) \in \bigcup_{m \in \bZ} \left(f \circ (g \circ f)^m\right)(x)\right)\]
\end{claim}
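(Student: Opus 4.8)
The plan is to combine the back-and-forth of~\Cref{thm:ninftymyhill} with the classical matching argument of~\Cref{thm:contbij2Ninfty}, using the modality $\modal_{\LPO^\diamond}$ (which, being idempotent, is $\modal_{\mathsf{C}_\bN}$) to perform once and for all the finitely many discrete choices that~\Cref{thm:noNNinftyMP} shows cannot be made by any continuous functional. The two points at infinity $(0,\infty)$ and $(1,\infty)$ are what make the problem harder than for $\Ninfty$: whereas for a single point at infinity a single $\LPO$-query sufficed to produce an honest bijection, here we must also decide how the two ``ladders'' of $2 \times \Ninfty$ are to be matched up near infinity, and it is this matching that consumes the extra strength of $\modal_{\LPO^\diamond}$. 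The per-element conclusion remains at $\modal_{\LPO}$ because, once $h$ is fixed, tracing the orbit of a single $x$ only ever requires locating $x$ relative to $\infty$.

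First I would resolve the behaviour of $f$ and $g$ at the two points at infinity. The tangle types $\pi_1(f(i,\infty)), \pi_1(g(i,\infty)) \in 2$ are outright computable, the first projection landing in the discrete set $2$. For each of the images $f(i,\infty)$ and $g(i,\infty)$, $i \in 2$, I would ask by a single $\LPO$-query whether its second component is $\infty$ (boundedly many such queries suffice, the eight of the footnote accounting also for the inverse construction). If any answer is negative then, by $\MP$, that image is some isolated $(a, \underline{n})$, so one of $f, g$ restricts to an injection $\Ninfty \to 2 \times \Ninfty$ sending $\infty$ to an isolated point and~\Cref{lem:inftyToIsolated} yields $\LPO$ outright; in that branch $\Ninfty \cong \bN$, hence $2\times\Ninfty \cong \bN$, and~\Cref{thm:myhilliso55} supplies $h$ with literal (hence a fortiori $\modal_{\LPO}$) graph membership. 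In the complementary branch all images lie at infinity, so $f$ and $g$ behave at infinity exactly like the continuous injections classified in~\Cref{lem:continjNNinfty}; if their tangle types are opposite I would replace $g$ by $g \circ f \circ g$, which realigns them and stays injective while only shrinking the generated graph (as $f \circ (g \circ f \circ g \circ f)^m = f \circ (g \circ f)^{2m}$), exactly as at the end of the proof of~\Cref{thm:contbij2Ninfty}.

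With $f$ and $g$ sharing a tangle type and fixing both points at infinity, I would then build $h$ by the back-and-forth of~\Cref{lem:ninftymyhillapprox}, maintaining finite partial isomorphisms over the isolated points $2 \times \underline{\bN}$ and resolving the infimum comparisons at each finite stage through~\Cref{lem:MPmin} and $\MP$. The one genuinely new ingredient is the Cantor-Bernstein-style matching of the two components near infinity as in~\Cref{thm:contbij2Ninfty}: selecting how the partial isomorphism threads between the two ladders requires finitely many $\LLPO$-strength comparisons, and it is precisely here that a single instance of $\mathsf{C}_\bN \equiv \LLPO^\ast \equiv \LPO^\diamond$ is spent. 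The union of the approximations, extended by the already-determined values at $(0,\infty)$ and $(1,\infty)$, is the desired bijection $h$. To witness the conclusion for a fixed $x$, I would, just as at the end of the proof of~\Cref{thm:ninftymyhill}, take the witness $\inf(x, s)$ for a suitable $s \in \Ninfty$ recording the stage of the construction, so that a single $\LPO$-query deciding whether $x$ is finite or equal to $\infty$ lets us read off the path from $x$ to $h(x)$; this is what the modality $\modal_{\LPO}$ records.

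The main obstacle is the coherence of this two-component matching: unlike the single ladder of~\Cref{thm:ninftymyhill}, the orbits in the two components must be paired off without leaving any element unmatched, and a size mismatch near a transition is exactly the phenomenon driving the impossibility in~\Cref{thm:noNNinftyMP} (the pigeonhole on the sets $S_L$ and $S_R$). The delicate point is therefore to show that, once the tangle and infinity data are known, the matching can always be completed --- equivalently, that the only irreducible obstruction is the non-uniform choice that $\mathsf{C}_\bN$ resolves --- and to keep the bookkeeping tight enough that the building phase stays within one $\mathsf{C}_\bN$-instance and the verification within one $\LPO$-query. Extracting the sharper bound $\LLPO^\ast \star \LPO^8$ mentioned in the footnote would require tracking which of these queries are genuinely of $\LLPO$- rather than $\LPO$-strength, and checking that continuity of $f$ and $g$ removes the $\LPO^8$ factor altogether.
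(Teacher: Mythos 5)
There is a genuine gap, in two places. First, your preliminary phase only interrogates $f$ and $g$ at the two points at infinity (whether $\pi_2(f(i,\infty)) = \infty$, etc.), but this does not establish the continuity data the back-and-forth needs. An injection of $2 \times \Ninfty$ can fix both points at infinity and have both tangle types trivial while its first component $\pi_1 \circ f$ oscillates infinitely often on the isolated points (e.g.\ $f(0,\underline{2k}) \in \{0\} \times \Ninfty$ but $f(0,\underline{2k+1}) \in \{1\} \times \Ninfty$, completed injectively); such an $f$ is discontinuous, passes all eight of your queries, and leaves your construction with no finite threshold $n$ beyond which it may commit to $\pi_1(h(i,x)) = i$ --- without that commitment the limit values $h(i,\infty)$ are simply not determined by finite partial isomorphisms over $2 \times \underline{\bN}$. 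The paper's proof opens with exactly the step you are missing: compute a modulus of uniform continuity for $\pi_1 \circ f$ and $\pi_1 \circ g$ as an element of $\Ninfty$, spend one $\modal_\LPO$ question on whether it is $\infty$ (in which case \Cref{lem:oscillation} plus $\MP$ yields $\LPO$ outright), and otherwise extract the threshold $n$ of \Cref{lem:continjNNinfty}. Your queries at the infinities correspond only to the paper's secondary isolated-point checks.

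Second, the step you yourself flag as ``the delicate point'' --- that once tangle and infinity data are known the two-component matching can always be completed --- is the actual content of the proof, and your proposal does not supply it. The paper resolves it concretely: using the selection function of \Cref{thm:Ninftysel} it computes, for each $(i,k)$ in the danger zone $2 \times \{0,\dots,n-1\}$, a predecessor-tracking sequence $p_{i,k}$, asks one $\LPO$ (in fact $\LLPO$-strength) question per such point about whether it has a \emph{finite and odd} number of predecessors not lying in a cycle outside the danger zone, pre-assigns values to those finitely many predecessors, and then runs the back-and-forth with a reverse-travel rule: whenever the usual procedure would route a large point $x \ge \underline{n'}$ through the danger zone, one instead walks the graph backwards, which is legitimate precisely because the parity questions guarantee infinitely many predecessors outside the zone. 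This is what neutralizes the $S_L$/$S_R$ pigeonhole obstruction of \Cref{thm:noNNinftyMP}; asserting that ``finitely many $\LLPO$-strength comparisons'' suffice, without this mechanism, leaves the proof's core missing. Note also that the sequential structure --- first learn $n$, then ask the $O(n)$ parity questions --- is exactly why $\modal_{\LPO^\diamond}$ (i.e.\ $\modal_{\mathsf{C}_\bN}$) is the right modality; and your identification $\mathsf{C}_\bN \equiv \LLPO^\ast$ is false ($\mathsf{C}_\bN \not\leq_{\mathrm{W}} \LLPO^\ast$): the footnote's sharper bound is the compositional product $\LLPO^\ast \star \LPO^8$, not an equivalence with $\LLPO^\ast$.
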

\begin{proof}[Proof sketch]
One may compute a ``modulus of (uniform) continuity'' as an element of $\Ninfty$
for $\pi_1 \circ f$ and $\pi_1 \circ g$. Using $\modal_\LPO$, we can reason
according to whether it is $\infty$, in which case it is clear that either
$f$ or $g$ is discontinuous and we can use~\Cref{lem:oscillation} to deduce
$\LPO$, or get a large enough $n \in \bN$ as per~\Cref{lem:continjNNinfty}.
We can similarly use a couple of $\LPO$ question to determine whether $f$ and
$g$ send isolated points to isolated points, and deduce $\LPO$ if that is not
the case. Once we have the confirmation that $f$ and $g$ are continuous and
that $n$ is a modulus of continuity for $\pi_1 \circ f$ and $\pi_1 \circ g$,
we then put ourselves without loss of generality in the case where
both $f$ and $g$ are untangled ($f(0,\infty) = g(0, \infty) = (0, \infty)$)
by using a similar trick as in the proof of~\Cref{thm:contbij2Ninfty}.
We can then start a back-and-forth to determine $h$ and $h^{-1}$ on
$2 \times \{0, \ldots, n-1\}$. Once that is done, note that the selection
function for $\Ninfty$ allows us to compute, for any map
$\iota : 2 \times \Ninfty \to 2 \times \Ninfty$ and $(i, k) \in 2 \times \bN$
a potential elements in $\iota^{-1}(i, \underline{k}) \cap \{i\} \times \Ninfty$
(i.e. the elements is in $\iota^{1}(i, \underline{k}) \cap \{i\} \times \Ninfty$
if and only if that set is inhabited at all).
We may use this to compute for
any such $(i, k)$ a sequence $p_{i,k}$ such that, for any $m \in \bN$,
\[ p_{i,k}(m) \in (f \circ g)^{-1}(i,k) \quad \Longleftrightarrow
  \quad \forall m' \le m. \; \exists x \in \Ninfty. \; 
(i, x) \in (f \circ g)^{-m'}(i,k)\]
Using one $\LPO$ question, one can check whether
$f(g(i,p_{i,k}(m+1))) = (i, p_{i, k}(m))$ and $p_{i,k}(m) \ge \underline{n}$
for every $m \in \bZ$. So we can detect
whether it is the case that some $(i, \underline{k})$ with $k < n$ has
only a finite and odd\footnote{Because the problematic cases are restricted
to those odd number of predecessors, an $\LLPO$ question instead of the $\LPO$ question
would suffice.} number of predecessors that do not sit in a cycle outside of the
danger zone that is $2 \times \{0, \ldots, n-1\}$. In such a case, extend the
back-and-forth construction so that those finitely many predecessors get assigned
a value by $h$ (and do a similar process for $h^{-1}$). Since the danger zone is finite, the back-and-forth does not
determine $h$ and $h^{-1}$ on any values $(i,x)$ for $x \ge \underline{n'}$ for some $n' > n$.
For any such value, set $\pi_1(h(i,x)) = \pi_1(h^{-1}(i,x)) = i$.
The second components of $h$ and $h^{-1}$ are then
determined by a back-and-forth that respects this constraint. The back-and-forth
is still carried out as in the proof of Myhill's isomorphism theorem, save for
a special case: if we are ever led by the usual procedure to take a path
entering the danger zone to determine $h(i,x)$ (or $h^{-1}(i,x)$) for $x \ge \underline{n'}$, then
we know that $(i,x)$ has infinitely many predecessors outside of the danger
zone, so we rather attempt to travel the graph in reverse direction. For that,
we use that predecessors are computable using the selection function for $\Ninfty$.
Reasoning that $h$ and $h^{-1}$ are then well-defined can be done in a similar
way as in the proof of~\Cref{thm:ninftymyhill}.

Finally, to check that for any $(i, x) \in 2 \times \Ninfty$ we do have
\[\modal_{\LPO} \left(h(i,x) \in \bigcup_{m \in \bZ} \left(f \circ (g \circ f)^m\right)(i,x)\right)\]
the only non-trivial case is if we are in the final case where $f$ and $g$ are
continuous (and we do not have directly unrestricted access to $\LPO$). In such
a case, we use the modality to determine whether $x = \infty$ or $x = \underline{k}$
for some $k \in \bN$. The first case is easy as the construction guarantees
$h(i,\infty) = f(i, \infty)$, in the second case we need to inspect the back-and-forth
construction after it has assigned values for $h \cup h^{-1}$ for all pairs $(i, \underline{k'})$
for $k' \le \max(k,n')$.
\end{proof}

\subsection{Parameterized counter-examples}

Here we show that the $\neg\neg$ Myhill property does not play well with
(co)products and exponentials. The
main idea is to substitute $\Ninfty$ for $\bN$ in the arguments of~\Cref{subsec:negEM}
and make the obvious adaptations. The main point is that all reliance on $\LPO$ is avoided since
there is a selection function on $\Ninfty$, but we sometimes need to assume $\MP$
to have that $\Ninfty$ has the $\dneg$ Myhill property.

\begin{lemma}
For any $p \in \Omega$, if $(\{0,1\} \uplus \{2 \mid p\})^\bN$ has the $\dneg$ Myhill property,
then $p \vee \neg p$ holds.
\end{lemma}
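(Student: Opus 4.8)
The plan is to re-run the construction from \Cref{lem:expEM} essentially verbatim, the only genuinely new obligation being to certify that the relevant subsets are $\dneg$-stable so that the weaker hypothesis applies. Concretely, write $X = (\{0,1\} \uplus \{2 \mid p\})^\bN$, let $2_\infty$ be the sequence with $(2_\infty)_0 = 2$ and $(2_\infty)_{n+1} = 0$, and take $A = \{2x \mid x \in \Ninfty\} \subseteq \Ninfty \subseteq X$ (the even conaturals, as bit sequences) together with $B = A \cup \{2_\infty \mid p\}$. The injections $f$ (prepend two zeroes) and $g$ (turn a leading $2$ into a $1$, otherwise prepend two zeroes) are the same as in \Cref{lem:expEM}; I would recompute that they are total injections with $f^{-1}(B) = A$ and $g^{-1}(A) = B$, so that $A \preceq_1^X B$ and $B \preceq_1^X A$.

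The new ingredient is to exhibit $A$ and $B$ as effectively closed subsets of $X$. For $A$, I would build $\chi_A : X \to \Ninfty$ by corecursion, reading $s$ digit by digit and outputting $\infty$ exactly when $s$ is an even conatural: feed $\Succinfty$ while seeing $0$'s, accept a single $1$ only at an even position (after which all further digits must be $0$), and collapse to $\underline{0}$ as soon as a digit $2$, a misplaced $1$, or a second $1$ is seen. Case-splitting on each digit is legitimate since $\{0,1\} \uplus \{2 \mid p\}$ is a coproduct, so membership in the $\inl$ versus $\inr$ summand is decidable. This gives $A = \chi_A^{-1}(\infty)$; likewise $\{2_\infty\} = \chi_{2_\infty}^{-1}(\infty)$ as in the singleton example, and $B = \chi_A^{-1}(\infty) \cup \chi_{2_\infty}^{-1}(\infty) = (\chi_A + \chi_{2_\infty})^{-1}(\infty)$ using the pointwise-sum example. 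By the lemma that preimages of $\infty$ are $\dneg$-stable, both $A$ and $B$ are $\dneg$-subsets of $X$.

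With $A, B$ certified as $\dneg$-subsets, the $\dneg$ Myhill property of $X$ supplies a genuine bijection $h : X \to X$ with $h(A) = B$. From here the argument of \Cref{lem:expEM} goes through unchanged: for $x \in \Ninfty$ the predicate $h(x)_0 = 2$ is decidable (it asks whether the digit $h(x)_0$ lies in the $\inr$ summand), so by \Cref{thm:Ninftysel} I can decide $\exists x \in \Ninfty.\, h(x)_0 = 2$. If such an $x$ exists, the value $h(x)_0 = 2$ is a witness that the digit $2$ is available, i.e.\ $p$ holds. Otherwise, assuming $p$ we would have $2_\infty \in B = h(A)$ with $A \subseteq \Ninfty$, yielding some $x \in \Ninfty$ with $h(x) = 2_\infty$ and hence $h(x)_0 = 2$, a contradiction; so $\neg p$ holds. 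Either way $p \vee \neg p$.

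The only real work, and the step I expect to be fiddly rather than deep, is the effectively closed presentation of $A$ and $B$ inside an exponential whose alphabet is a $p$-dependent coproduct; everything else is a transcription of \Cref{lem:expEM}. It is worth emphasising that, unlike the product and coproduct cases of this subsection, no appeal to $\MP$ is needed: we never ask $\Ninfty$ itself to have the $\dneg$ Myhill property, and the selection function of \Cref{thm:Ninftysel} removes any reliance on $\LPO$.
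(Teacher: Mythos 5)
Your proposal is correct and is exactly the paper's proof: the paper disposes of this lemma in one line (``same as \Cref{lem:expEM} modulo the easy check that the subsets in play are $\dneg$-stable''), and the $\dneg$-stability check you spell out---presenting $A$, $\{2_\infty \mid p\}$ and their union as preimages of $\infty$ under corecursively defined maps to $\Ninfty$, using decidability of the summand case-split on digits---is precisely the route the paper's preceding examples (singletons, $2\Ninfty$, pointwise sums) were set up to cover. Your closing observations, including that neither $\MP$ nor $\LPO$ is needed here since $\Ninfty$'s own $\dneg$ Myhill property is never invoked and \Cref{thm:Ninftysel} supplies the decision, are likewise consistent with the paper.
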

\begin{proof}
Same as~\Cref{lem:expEM} modulo the easy check that the subsets in play are $\dneg$-stable.
\end{proof}

\begin{lemma}
For any $p \in \Omega$, if $p + \Ninfty$ has the $\dneg$ Myhill property,
then $p \vee \neg p$ holds.
\end{lemma}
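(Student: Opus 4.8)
The plan is to run the argument for $p + \bN$ from~\Cref{subsec:negEM} with $\bN$ replaced by $\Ninfty$, the single appeal to $\LPO$ being replaced by the selection function $\varepsilon$ of~\Cref{thm:Ninftysel}. Accordingly, I would first fix the subsets $A = \{\inr(2x) \mid x \in \Ninfty\}$, the image in the right summand of the ``even'' conatural numbers (writing $2x$ for the sequence $s$ with $s_{2k} = x_k$ and $s_{2k+1} = 0$, so that $2\underline{n} = \underline{2n}$ and $2\infty = \infty$), and $B = A \cup \{\inl(0) \mid p\}$. The point of this step is $\dneg$-stability: membership $\inr(y) \in A$ is equivalent to the $\Pi^0_1$ statement ``$y_k = 0$ for every odd $k$'', hence is $\dneg$-stable, while the left point is only present when $p$ holds, in which case its membership in $B$ is outright true. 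Thus $A$ and $B$ are $\dneg$-subsets and the $\dneg$ Myhill property applies to them.

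Next I would produce the injective reductions. With $\Succinfty^2$ the shift-by-two map $x \mapsto 00x$, I set $f(\inr(y)) = \inr(\Succinfty^2(y))$ and $f(\inl(0)) = \inr(\underline{1})$, and $g(\inr(y)) = \inr(\Succinfty^2(y))$ and $g(\inl(0)) = \inr(\underline{0})$. Since $\Succinfty^2$ is injective with image avoiding $\underline{0}$ and $\underline{1}$, and preserves parity, both $f$ and $g$ are injective, and one checks directly that $A = f^{-1}(B)$ and $B = g^{-1}(A)$, i.e. $A \preceq_1 B$ and $B \preceq_1 A$. The $\dneg$ Myhill property then supplies a bijection $h : p + \Ninfty \to p + \Ninfty$ that restricts to a bijection $A \to B$; in particular $h(A) = B$ and $h^{-1}(B) = A$.

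To conclude, I would decide $p \vee \neg p$ using the selection function in place of $\LPO$. Consider $P : \Ninfty \to 2$ given by $P(x) = 1$ iff $h(\inr(2x))$ lies in the left summand; this is decidable since in a coproduct one can decide whether a point is of the form $\inl(\cdot)$ or $\inr(\cdot)$, and any $x$ with $P(x) = 1$ already exhibits an element of $p$. By~\Cref{thm:Ninftysel} we may decide whether $\exists x \in \Ninfty.\; P(x) = 1$ or $\forall x \in \Ninfty.\; P(x) = 0$, which is exactly the step that consumed $\LPO$ when the index set was $\bN$. In the first case $p$ holds. In the second case I claim $\neg p$: assuming $p$, the point $\inl(0)$ exists and lies in $B = h(A)$, so $h^{-1}(\inl(0)) \in A$ is of the form $\inr(2x_0)$ (halving its even index), whence $P(x_0) = 1$, contradicting the second alternative.

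The genuinely delicate points are phrasing ``even'' as the $\Pi^0_1$ condition above so that $A$ is manifestly $\dneg$-stable, and observing that the decisive existential ranges over $\Ninfty$ with a decidable body, so that~\Cref{thm:Ninftysel} stands in for $\LPO$; the reduction checks are routine. Note that, in contrast to the $p + \bN$ case, this argument consumes neither $\LPO$ nor $\MP$; $\MP$ is only needed elsewhere, to know that $\Ninfty$ itself enjoys the $\dneg$ Myhill property.
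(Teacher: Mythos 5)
Your proposal is correct and matches the paper's proof essentially step for step: the same sets $A = \{\inr(2x) \mid x \in \Ninfty\}$ and $B = A \cup \{\inl(0) \mid p\}$, the same injections (your $\inr(\Succinfty^2(y))$ is the paper's $\inr(\underline{2}+z)$, with $f(\inl(0)) = \inr(\underline{1})$ and $g(\inl(0)) = \inr(\underline{0})$), and the same final appeal to \Cref{thm:Ninftysel} to decide whether $\inl(0) \in h(A)$. Your explicit $\Pi^0_1$ verification that $A$ and $B$ are $\dneg$-stable, and the observation that neither $\LPO$ nor $\MP$ is consumed, are details the paper leaves implicit, and you get them right.
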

\begin{proof}
Take $A = \{ \inr(2x) \mid x \in \Ninfty\}$ and $B = A \cup \{\inl(0) \mid p\}$.

Then we have injections 
$f$ and $g$ that witness $A \preceq_1^{p + \Ninfty} B$ and $B \preceq_1^{p + \Ninfty} A$
respectively:
\[
\begin{array}{llcl !\qquad lcl}
& f(\inr(z)) &=& \inr(\underline{2} + z) &
g(\inr(z)) &=& \inr(\underline{2} + z) \\
 \text{and, if $p$ holds, } &
f(\inl(0)) &=& \inr(\underline{1}) &
g(\inl(0)) &=& \inr(\underline{0})
\end{array}
\]

Now, since $p + \Ninfty$ has the $\dneg$ Myhill property, we have a bijection $h : p + \Ninfty \to p + \Ninfty$
such that $h(A) = B$. By \Cref{thm:Ninftysel}, we can decide whether
$\inl(0)$ belongs to the image of $h$. If it does, then we know that $p$ holds.
Otherwise, assuming $p$ leads to $\inl(0) \in B = h(A)$ and a contradiction;
thus $\neg p$ holds.
\end{proof}

\begin{lemma}
For any $p \in \Omega$, if $(p \uplus \{1\}) \times \Ninfty$ has the $\dneg$ Myhill property,
then $p \vee \neg p$ holds.
\end{lemma}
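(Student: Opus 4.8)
The plan is to transport the proof of the analogous statement for $(p \uplus \{1\}) \times \bN$ to the conatural setting, replacing $\bN$ by $\Ninfty$ in the second coordinate and, crucially, replacing every appeal to $\LPO$ by an appeal to the selection functional of~\Cref{thm:Ninftysel}. First I would set $A = \{(1, 2x) \mid x \in \Ninfty\}$ (the $\{1\}$-ladder restricted to the even conaturals) and $B = A \cup \{(0, \underline 0) \mid p\}$, where $(0, -)$ denotes the injection $\inl$ into $p \uplus \{1\}$. Both are effectively closed --- $A$ is $\{1\}$ paired with the inverse image of $\infty$ under a corecursively defined parity map $\Ninfty \to \Ninfty$, and membership in $B$ differs only by the decidable test ``$y = \underline 0$'' --- hence both are $\dneg$-stable, as required to invoke the $\dneg$ Myhill property; I would defer this routine check as the paper does elsewhere.

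Next I would exhibit the injective reductions, mirroring the $\bN$ case. Set $f(1,z) = (1, \underline 2 + z)$ and $g(1,z) = (1, \underline 2 + z)$ on the $\{1\}$-ladder, and, if $p$ holds, $f(0,x) = (0, \Succinfty(x))$ together with $g(0, \underline 0) = (1, \underline 0)$ and $g(0, \Succinfty(x)) = (0,x)$. The map $g$ is total on the $\{0\}$-ladder because every $y \in \Ninfty$ is decidably either $\underline 0$ or of the form $\Succinfty(y')$ (by inspecting the first bit), and one checks routinely that $f, g$ are injective and satisfy $A = f^{-1}(B)$ and $B = g^{-1}(A)$ --- the only substantive point being that adding $\underline 2$ preserves parity, so that $\{1\}$-ladder elements land in $B$ exactly when their source lies in $A$. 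The $\dneg$ Myhill property then yields an honest bijection $h$ with $h(A) = B$.

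The decisive step is the conclusion, where I replace $\LPO$ by the selection functional. The predicate $x \mapsto [\, h(1,x) = (0, \underline 0)\,]$ is decidable over $\Ninfty$: one can decide whether the first component of $h(1,x)$ is $\inl$ (coproducts have decidable injections), and since $\underline 0$ is isolated one can decide $\pi_2(h(1,x)) = \underline 0$. Hence \Cref{thm:Ninftysel} lets us decide whether $\exists x \in \Ninfty.\; h(1,x) = (0, \underline 0)$ or $\forall x \in \Ninfty.\; h(1,x) \neq (0, \underline 0)$. In the first case $(0, \underline 0)$ occurs as a value of $h$, and the mere existence of an element of $(p \uplus \{1\}) \times \Ninfty$ whose first component is $\inl$ forces $p$. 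In the second case, assuming $p$ we would have $(0, \underline 0) \in B = h(A)$; since $A \subseteq \{1\} \times \Ninfty$, its preimage would be some $(1,x)$, giving $h(1,x) = (0,\underline 0)$, a contradiction, so $\neg p$. Either way $p \vee \neg p$ holds.

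The main obstacle is essentially the single sentence about decidability: everything reduces to checking that the existential quantifier over $\Ninfty$ we must resolve ranges over a \emph{decidable} predicate, so that \Cref{thm:Ninftysel} can stand in for $\LPO$. The isolatedness of $\underline 0$ is exactly what makes this go through, and it is also why no $\MP$ assumption is needed here --- unlike in the downstream corollaries that additionally invoke $\Ninfty$ itself having the $\dneg$ Myhill property.
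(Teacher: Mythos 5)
Your proof is correct and follows essentially the same route as the paper's: the same sets $A = \{(1,2x) \mid x \in \Ninfty\}$ and $B = A \cup \{(0,\underline{0}) \mid p\}$, the same injections (your $\Succinfty(x)$ is the paper's $x + \underline{1}$), and the same use of \Cref{thm:Ninftysel} to decide $\exists x \in \Ninfty.\; h(1,x) = (0,\underline{0})$ followed by the identical case split. Your extra remarks on the decidability of the selection predicate (via isolatedness of $\underline{0}$ and decidable coproduct tags) and on $\dneg$-stability merely make explicit what the paper leaves to the reader.
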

\begin{proof}
Take $A = \{ (1, \inr(2x)) \mid x \in \Ninfty\}$ and $B = A \cup \{(0, \underline{0}) \mid p\}$.
Then we have injections 
$f$ and $g$ that witness $A \preceq_1^{(p \uplus \{1\}) \times \Ninfty} B$ and $B \preceq_1^{(p \uplus \{1\}) \times \Ninfty} A$
respectively:
\[
\begin{array}{llcl !\qquad lcl}
& f(1, z) &=& (1,\underline{2} + z) &
g(1,z) &=& (1, \underline{2} + z) \\
 \text{and, if $p$ holds, } &
 f(0, x) &=& (0, x + \underline{1}) &
 g(0, \underline{0}) &=& (1, \underline{0}) \\
& & & & g(0, x + \underline{1}) &=& (0, x) \\
\end{array}
\]
Now, if $p + \Ninfty$ has the Myhill property, then we have a bijection $h :
(p \uplus \{1\}) \times \Ninfty \to (p \uplus \{1\}) \times \Ninfty$
with $h(A) = B$. By \Cref{thm:Ninftysel}, we can decide whether there exists
$x \in \Ninfty$ such that $h(1,x) = (0, \underline{0})$ or not.
If there is, then we know that $p$ must hold.
Otherwise, assuming $p$, we have $(0, \underline{0}) \in B$.
But since $B = h(A)$ and $A \subseteq \{1\} \times \Ninfty$, we then would
have a contradiction. Hence $\neg p$ holds in that case.
\end{proof}

\begin{corollary}
Markov's principle and ``if $A$ and $B$ have the $\dneg$ strong Myhill property entails
that either $A + B$ or $A \times B$ has the $\dneg$ Myhill property'' imply excluded middle.
\end{corollary}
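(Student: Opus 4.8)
The plan is to fix an arbitrary proposition $p \in \Omega$ and derive $p \vee \neg p$; since $p$ is arbitrary, this yields full excluded middle. First I would set $A := p \uplus \{1\}$ and $B := \Ninfty$ and check that both carry the $\dneg$ strong Myhill property. For $A$, note that it is a subset of $2$, so \Cref{lem:subfinite} gives it the strong Myhill property, and the strong Myhill property trivially implies the $\dneg$ strong Myhill property (if $h(x)$ lies in the relevant union outright, then a fortiori $\dneg$ of that statement holds). For $B = \Ninfty$, I would invoke \Cref{thm:ninftymyhill} to obtain the $\modal_{\LPO}$ strong Myhill property under $\MP$, then use \Cref{lem:LPOltdneg} (that $\modal_{\LPO} \le \dneg$) together with the monotonicity of the modal (strong) Myhill property in the modality to upgrade this to the $\dneg$ strong Myhill property. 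With both hypotheses in hand, the closure assumption produces a disjunction: either $(p \uplus \{1\}) + \Ninfty$ or $(p \uplus \{1\}) \times \Ninfty$ has the $\dneg$ Myhill property.

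The remaining work is to show that each disjunct independently forces $p \vee \neg p$, after which a routine $\vee$-elimination finishes the argument. The product disjunct is immediate: $(p \uplus \{1\}) \times \Ninfty$ having the $\dneg$ Myhill property is exactly the hypothesis of the preceding lemma on $(p \uplus \{1\}) \times \Ninfty$, whose conclusion is $p \vee \neg p$. For the coproduct disjunct I would exploit the absorption isomorphism $\{1\} + \Ninfty \cong 1 + \Ninfty \cong \Ninfty$, supplied by the inverse of $[\underline{0}, \Succinfty]$ (\Cref{lem:NinftyTerminalCoAlg}). Rearranging coproducts, this gives a set isomorphism
\[(p \uplus \{1\}) + \Ninfty \;\cong\; p + (\{1\} + \Ninfty) \;\cong\; p + \Ninfty.\]
Since the $\dneg$ Myhill property is invariant under isomorphism of the ambient set --- injections, bijections, reductions, and $\dneg$-stability of membership all transport along a bijection --- I would conclude that $p + \Ninfty$ inherits the $\dneg$ Myhill property, and then the preceding lemma on $p + \Ninfty$ delivers $p \vee \neg p$. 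Either way the disjunction collapses to $p \vee \neg p$.

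I expect the only genuinely delicate points to be bookkeeping rather than conceptual. The first is confirming that the $\dneg$-subsets used inside the two invoked lemmas really are $\dneg$-stable; this works because the conditional points (such as $\inl(0)$ or $(0,\underline{0})$) exist only when $p$ holds, so their membership is in fact decidable on honest elements, while the remaining ``ladder'' subsets are effectively closed. The second is verifying that the $\dneg$ Myhill property transports across the isomorphism $(p \uplus \{1\}) + \Ninfty \cong p + \Ninfty$ at the level of $\dneg$-subsets, which is straightforward. Finally, I would record that $\MP$ is used in exactly one place, namely to endow $\Ninfty$ with the $\dneg$ strong Myhill property via \Cref{thm:ninftymyhill}; every other step is constructive.
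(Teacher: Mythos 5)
Your proposal is correct and follows essentially the same route as the paper, whose proof is the one-liner ``put together the previous lemmas with \Cref{lem:subfinite} and \Cref{thm:ninftymyhill}'': you instantiate the closure hypothesis with $A = p \uplus \{1\}$ and $B = \Ninfty$, discharge the product branch by the lemma on $(p \uplus \{1\}) \times \Ninfty$, and discharge the coproduct branch by the lemma on $p + \Ninfty$. The only detail you add beyond the paper's implicit bookkeeping is welcome rather than divergent: the mismatch between $(p \uplus \{1\}) + \Ninfty$ and $p + \Ninfty$ must indeed be bridged by the absorption isomorphism $1 + \Ninfty \cong \Ninfty$ (from the inverse of $[\underline{0}, \Succinfty]$) together with invariance of the $\dneg$ Myhill property under isomorphism of the ambient set, exactly as you note.
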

\begin{proof}
Put together the previous lemmas with \Cref{lem:subfinite} and \Cref{thm:ninftymyhill}.
\end{proof}

\section{Conclusion}

The overall picture is rather bleak if one is hoping to naively and constructively
extend the Myhill isomorphism theorem to sets other than $\bN$ or subfinite
sets. While it is possible to get explicit
constructions for $\Ninfty$ (at the cost of inserting some double-negations in
the correctness condition and accepting Markov's principle), a number of natural examples including
$2^\bN$ and $\Baire$ fail to have the same property.

However, something that can be fun is to attempt to calibrate precisely how
hard this failure is in the style of constructive reverse mathematics.
For instance, for $\bN + \Ninfty$, $\bN \times \Ninfty$ and $\Ninfty^2$, we
know that those having the $\dneg$ Myhill property is equivalent to $\LPO$.
We could ask a similar question for $2^\bN$ and $\Baire$.
Aside from equivalences, another way one could calibrate is by using
modalities in front of the existential statements (as in~\Cref{conj:2NinftyModLPOdia}),
and see what are the least powerful modalities that do the job.

One could also use the generalization of modalities such that
act on the universe of all sets instead of propositions as discussed
in~\cite{RSSmod,Swan24}\footnote{A similar use of endofunctors also
occur in computable analysis~\cite{PaulydBrechtSDST} to e.g. define representations
for certain classes of non-continuous maps.}.
In such a setting, for a set $X$, an element of
$a \in \modal_{\LPO} X$ should be regarded as a constant partial function that yields
an element of $X$ if it is given as input a solution to some (uniquely determined)
$\LPO$ question. Then, another weakening of the $\dneg$ Myhill property could
ask, in lieu of a bijection $X \to X$, for the existence of a
$h : X \to \modal X$ and an ``inverse'' $h^{-1} : X \to \modal X$ (modulo the
application of the modality).

This sort of fine calibration might also be more interesting to trial on theorems of
a similar flavor, such as strong variants of the Cantor-Bernstein theorem~\cite{Banaschewski-1986}
or ``set division problems'' whose (non)-constructivity has received
recent attention~\cite{doyle2015division,S18dividing,MO22}.

\printthanks

\bibliographystyle{eptcs}
\bibliography{biblio}

\end{document}